\documentclass[10pt]{amsart}
\usepackage{amsmath}
\usepackage{amsthm,amsxtra,amssymb,amsfonts,latexsym, mathrsfs, dsfont, bbm, bm, verbatim, mathtools}

\usepackage[initials,alphabetic]{amsrefs}
\usepackage[usenames]{color}
\usepackage[all]{xy}
\usepackage{hyperref}
\usepackage{mathrsfs}
\renewcommand{\vec}[1]{\underline{#1}}

\allowdisplaybreaks[1]

\newtheorem{lemma}{Lemma}[section]
\newtheorem{prop}[lemma]{Proposition}
\newtheorem{theorem}[lemma]{Theorem}
\newtheorem{cor}[lemma]{Corollary}

\theoremstyle{remark}
\newtheorem{rem}[lemma]{Remark}

\newtheorem{question}[lemma]{Question}

\theoremstyle{definition}
\newtheorem{defn}[lemma]{Definition}


\newcommand{\lge}{\langle}
\newcommand{\rge}{\rangle}

\newcommand{\ax}{\mathcal{A}}
\newcommand{\bx}{\mathcal{B}}

\newcommand{\fx}{\mathcal{F}}

\newcommand{\mx}{\mathcal{M}}

\newcommand{\nz}{\mathbb{N}}
\newcommand{\rz}{\mathbb{R}}
\newcommand{\cz}{\mathbb{C}}

\newcommand{\fz}{\mathbb{F}}

\newcommand{\Ga}{\Gamma}
\newcommand{\Om}{\Omega}

\newcommand{\ds}{\mathscr{D}}
\newcommand{\ps}{\mathscr{P}}
\newcommand{\js}{\mathscr{J}}
\newcommand{\ns}{\mathscr{N}}
\renewcommand{\ss}{\mathscr{S}}

\newcommand{\de}{\delta}
\newcommand{\si}{\sigma}

\newcommand{\8}{\infty}

\mathchardef\dash="2D

\newcommand{\Dom}{\operatorname{Dom}}

\newcommand{\Tr}{\operatorname{Tr}}

\addtolength{\parskip}{1.05ex}
 \oddsidemargin0cm
 \evensidemargin0cm
  \textwidth16.1cm

\begin{document}
\title{Free monotone transport for infinite variables}
\date{}
\author[Brent Nelson]{Brent Nelson$^\bullet$}
\address{$\bullet$ Department of Mathematics, University of California, Berkeley, CA 94709}
\email{brent@math.berkeley.edu}
\thanks{$\bullet$ Research supported by the NSF award DMS-1502822.}
\author[Qiang Zeng]{Qiang Zeng$^\circ$}
\address{$\circ$ Mathematics Department, Northwestern University,
2033 Sheridan Road Evanston, IL 60208}
\email{qzeng.math@gmail.com}
\thanks{$\circ$ Research supported by the NSF under Grant No. DMS-1440140 while the author was in residence at the MSRI in Berkeley, CA, during the Fall 2015 semester.}
\keywords{free transport, mixed $q$-Gaussian algebras, conjugate variables, infinitely generated von Neumann algebras, free group factors}
\maketitle

\begin{abstract}
We extend the free monotone transport theorem of Guionnet and Shlyakhtenko to the case of infinite variables. As a first application, we provide a criterion for when mixed $q$-Gaussian algebras are isomorphic to $L(\mathbb{F}_\infty)$; namely, when the structure array $Q$ of a mixed $q$-Gaussian algebra has uniformly small entries that decay sufficiently rapidly. Here a mixed $q$-Gaussian algebra with structure array $Q=(q_{ij})_{i,j\in\mathbb{N}}$ is the von Neumann algebra generated by $X_n^Q=l_n+l_n^*, n\in\mathbb{N}$ and $(l_n)$ are the Fock space representations of the commutation relation $l_i^*l_j-q_{ij}l_jl_i^*=\delta_{i=j}, i,j\in\mathbb{N}$, $-1<q_{ij}=q_{ji}<1$.
\end{abstract}

\section{Introduction}
Classification is a central and difficult problem in the theory of operator algebras. In particular, the von Neumann algebras associated to free groups have been studied extensively ever since the early development of this field, while some major questions are still left open. It is well known that these von Neumann algebras, also known as the free group factors, are II$_1$ factors: they have trivial centers and finite traces. Due to the work of Voiculescu \cite{VDN}, it is known that the free group factors are isomorphic to the von Neumann algebras generated by free  semicircular variables. After Connes' deep theorem \cite{Con} that the von Neumann algebras of countable amenable groups with infinite conjugacy classes are all isomorphic to the hyperfinite II$_1$ factor, it is somewhat expected that the free group factors are the next important isomorphism class of II$_1$ factors; see Voiculescu's paper \cite{Voi06}.

On the other hand, motivated by quantum physics, Bo\.zejko and Speicher introduced a family of $C^*$-algebras and von Neumann algebras -- the $q$-Gaussian algebras \cite{BS91} and, more generally, the algebras associated to the Yang--Baxter equation \cite{BS94}. These algebras can be regarded as natural deformations of the operator algebras generated from free semicircular variables which correspond to the case $q=0$. We will call these deformed algebras the Bo\.zejko--Speicher algebras. This family of algebras contains both tracial and non-tracial von Neumann algebras, but in this paper we consider only the tracial case. A large body of work studying these algebras has been done in the last two decades. For an incomplete list, we mention \cites{BKS, Kr00, Sh04, Nou,Sn04,Kr05, Ri05,KN11,Av, JZ15}. All of these works show that the Bo\.zejko--Speicher algebras have similar properties to the operator algebras generated by free semicircular variables. Consequently, a natural question to ask is the following:

\begin{question}\label{ques0}
Are the Bo\.zejko--Speicher algebras (both $C^*$-algebras and von Neumann algebras) actually isomorphic when they have the same number of generators?
\end{question}
In fact, operator algebraists have become interested in this problem ever since the Bo\.zejko--Speicher algebras were introduced some 20 years ago. The situation when they have different numbers of generators in the case of von Neumann algebras is a generalized version of the famous free group factor isomorphism problem.

The first affirmative answer to Question \ref{ques0} is due to the recent works of Dabrowski, Guionnet and Shlyakhtenko in the case of finitely many generators. More precisely, Guionnet and Shlyakhtenko developed a remarkable theory of free monotone transport for a finite number of variables in \cite{GS14}, which can be regarded as the free analogue of Brenier's monotone transport theorem for Gaussian measures on $\rz^N$ \cite{Bre91}. Together with Dabrowski's work \cite{Dab} on the existence of conjugate variables and potentials for $q$-Gaussian variables, they showed that the $q$-Gaussian von Neumann algebras with $N$ generators are all isomorphic to that generated from $N$ free semicircular variables, provided $|q|$ is small enough. Thus they are isomorphic to the free group factor $L(\fz_N)$ associated to the free group $\fz_N$ with $N$ generators. Here the bound on $|q|$ depends on $N$, and degenerates to $q=0$ as $N\to \8$. The free monotone transport theory was extended to the type III setting and was applied to enlarge the isomorphism class of free Araki-Woods factors by the first named author in \cite{Ne15}. Then, we applied the free monotone transport theory to the so-called mixed $q$-Gaussian algebras and enlarged the isomorphism class of the free group factors in \cite{NZ15}. Similar results also hold in the $C^*$-algebra setting. Note that all the later developments were based on the seminal paper \cite{GS14}, and thus work only for a finite number of generators. This leads to the following question:

\begin{question}\label{ques1}
Are the methods of Guionnet and Shlyakhtenko \cite{GS14} valid for an infinite number of variables?
\end{question}

This question has been frequently asked by experts since the paper \cite{GS14} appeared. The difficulty is that the estimates obtained by Dabrowski \cite{Dab} for the upper bounds of $|q|$ (that guarantee the conjugate variables occur as power series in the generators) tend to zero as $N$ tends to infinity. This prevents one from obtaining new isomorphism results for  $L(\fz_\8)$, the von Neumann algebra of the countably generated free group $\fz_\8$.

In this paper, we provide an affirmative answer to Question \ref{ques1} and obtain a new isomorphism result in the context of mixed $q$-Gaussian algebras. Namely, we prove a free monotone transport theorem for infinite variables and use it to show that the mixed $q$-Gaussian von Neumann algebras of infinite generators are all isomorphic to $L(\fz_\8)$, provided the structure array of the mixed $q$-Gaussian algebras has uniformly small entries that decay sufficiently rapidly. Together with the previous works \cites{Dab, GS14}, this provides a reasonably satisfactory answer to Question \ref{ques0} in the regime of small perturbations of free semicircular systems, and gives more evidence towards Voiculescu's vision in \cite{Voi06}.

Let $Z=(Z_n)_{n\in \nz}$ be a sequence of self-adjoint operators in a von Neumann algebra with a faithful normal state. We conduct our analysis on particular Banach $*$-subalgebras  $\ps(Z)^{(R)}$ of $C^*(Z_n\colon n\in \nz)$. Namely, $\ps(Z)^{(R)}$ is the completion of $\cz\lge Z_n\colon n\in \nz\rge$ with respect to a certain norm $\|\cdot\|_R$ indexed by $R>\sup_n \|Z_n\|$, and can be thought of as the set of convergent power series in the $Z_n$, $n\in \nz$, with radii of convergence at least $R$. See Subsection \ref{temp_formalism}.

Our main result (see Theorem \ref{transport_thm}) on free monotone transport can be stated as follows:

\begin{theorem}
Let $X=(X_{n})_{n\in\nz}$ be a free semicircular system. Let $Z=(Z_n)_{n\in\nz}$ be a sequence of self-adjoint operators generating a von Neumann algebra $M$ with a faithful normal state $\psi$. Let $R>\max\{5, \sup_n\|Z_n\|\}$. There is a constant $C(R)>0$ such that if
	\[
		\psi( (Z+\ds W)\cdot P) = \psi\otimes\psi^{op}\circ\Tr( \js P)\qquad \forall P\in \bigoplus_{n\in \nz} \cz\lge Z_n\colon n\in \nz\rge
	\]
for some power series $W=W^* \in \mathscr{P}(Z)^{(R)}$ with $\|W\|_R<C(R)$, then
\begin{align*}
C^*(Z_n: n\in\nz) \cong C^*(X_n: n\in\nz)\quad \text{and}\quad M\cong L(\fz_\8).
\end{align*}
\end{theorem}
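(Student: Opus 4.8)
The plan is to adapt the free monotone transport construction of Guionnet and Shlyakhtenko \cite{GS14} to the infinite-variable Banach $*$-algebras $\mathscr{P}(\cdot)^{(R)}$ of Subsection \ref{temp_formalism}. The hypothesis says exactly that $Z$ admits conjugate variables $Z_n+[\ds W]_n$ relative to the difference-quotient/cyclic-gradient structure, i.e.\ that $Z$ has ``potential'' $V=\tfrac12\sum_n Z_n^2+W$, while the free semicircular system $X$ has potential $V_0=\tfrac12\sum_n X_n^2$ (its conjugate variables being the $X_n$ themselves, the case $W=0$). Following \cite{GS14}, I would seek a monotone transport map in the form $F=\ds g$ for a self-adjoint $g=\tfrac12\sum_n X_n^2+\td g$ in $\mathscr{P}(X)^{(R')}$, with $R'<R$ suitable and $\td g$ small in $\|\cdot\|_{R'}$, so that $F=(F_n)_{n\in\nz}$ is a tuple of convergent power series in the $X_m$ carrying the law of $X$ onto the law of $Z$.

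The requirement that $F=\ds g$ push the distribution of $X$ forward to that of $Z$ translates, as in \cite{GS14}, into a free Monge--Amp\`ere type identity for $g$. After differentiating this identity, cancelling the common linear part, and inverting the number (Euler) operator $\ns$ on the complement of the scalars, it becomes a fixed-point equation
\[
  \td g \;=\; \ns^{-1}\Phi(W,\td g),
\]
where $\Phi$ is an explicit expression in $W$ and $\td g$ built from the cyclic gradient $\ds$, the Jacobian $\js$ and its inverse, multiplication in $\mathscr{P}(X)^{(R')}$, and composition $P\mapsto P\circ F$ with the near-identity series $F=X+\ds\td g$. I would solve this by Picard iteration in a small ball of $\mathscr{P}(X)^{(R')}$.

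The crux --- and the point at which the argument genuinely departs from \cites{GS14, Ne15, NZ15} --- is to carry out all of these estimates \emph{uniformly in the number of variables}. In the finite-variable theory the constants blow up like powers of $N$, ultimately because there are $N^k$ words of length $k$; the norm $\|\cdot\|_R$ of Subsection \ref{temp_formalism} is instead engineered so that $\ds$, $\js$, $\ns^{-1}$ (on the non-scalar part), multiplication, and composition with a near-identity power series are all bounded with constants depending only on $R$. Granting such estimates, the assumption $R>\max\{5,\sup_n\|Z_n\|\}$, together with $\|W\|_R<C(R)$ for a sufficiently small $C(R)>0$, makes $\td g\mapsto\ns^{-1}\Phi(W,\td g)$ a contraction of a small ball of $\mathscr{P}(X)^{(R')}$ into itself, so Picard iteration produces a solution $g$ and hence a transport map $F=\ds g$ with $\|F-X\|_{R'}$ small. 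Establishing these uniform-in-$N$ bounds --- in particular controlling composition of power series and inverting the Jacobian --- is where essentially all the work lies, and I expect it to be the main obstacle.

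Finally I would harvest the isomorphisms. Because $\js g=1+\js\td g$ is a small perturbation of the identity in the appropriate algebra of infinite matrices over a completion of the algebraic tensor product of $\mathscr{P}(X)^{(R')}$ with its opposite algebra, an inverse-function-theorem argument (again uniform in the number of variables) produces $G=(G_n)_{n\in\nz}$, each $G_n$ a convergent power series, with $G\circ F=\mathrm{id}$ and $F\circ G=\mathrm{id}$; in particular $X_n=G_n(F(X))$ for every $n$. The Monge--Amp\`ere identity guarantees that $(F_n(X))_{n\in\nz}$ and $Z=(Z_n)_{n\in\nz}$ have the same joint $*$-distribution (so that $\psi$ is in particular a trace), whence $Z_n\mapsto F_n(X)$ extends to isomorphisms $C^*(Z_n:n\in\nz)\cong C^*(F_n(X):n\in\nz)$ and $M=W^*(Z_n:n\in\nz)\cong W^*(F_n(X):n\in\nz)$. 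Since each $F_n\in\mathscr{P}(X)^{(R')}\subseteq C^*(X_m:m\in\nz)$ we have $C^*(F_n(X):n\in\nz)\subseteq C^*(X_m:m\in\nz)$ and $W^*(F_n(X):n\in\nz)\subseteq W^*(X_m:m\in\nz)$, while the relations $X_n=G_n(F(X))$ give the reverse inclusions; hence equality holds in both. By Voiculescu's identification \cite{VDN} of the von Neumann algebra generated by a countably infinite free semicircular system with $L(\fz_\8)$, we conclude that $C^*(Z_n:n\in\nz)\cong C^*(X_n:n\in\nz)$ and $M\cong L(\fz_\8)$.
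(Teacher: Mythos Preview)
Your proposal follows essentially the same route as the paper's proof of Theorem~\ref{transport_thm}: construct transport $Y=X+\ds g$ by recasting the Schwinger--Dyson equation as a fixed-point equation for $\hat g=\ns g$ and solving it by contraction (the paper's Corollary~\ref{F_has_fixed_point}), with the uniform-in-$N$ estimates you anticipate carried out in Section~\ref{Notation}, and then invoke an inverse function theorem (Lemma~\ref{inverse_function}) to get $H$ with $H(Y)=X$ and hence the two-sided inclusions of $C^*$- and $W^*$-algebras.

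Two points to tighten. First, you write $g=\tfrac12\sum_n X_n^2+\td g\in\mathscr{P}(X)^{(R')}$, but the quadratic series has $\|\cdot\|_{R'}$-norm $+\infty$; this is precisely the obstruction discussed in Remark~\ref{V_0_def}, and the paper's workaround is to formulate everything directly in terms of the perturbation (your $\td g$) and never form the divergent sum. You appear to understand this, since you correctly write $F=X+\ds\td g$, but as written the object $g$ is not an element of any $\mathscr{P}(X)^{(R')}$. Second, and more substantively, the Monge--Amp\`ere identity alone does \emph{not} force $F(X)$ and $Z$ to have the same joint law: it only shows that both laws satisfy the same Schwinger--Dyson equation with perturbation $W$. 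You need a uniqueness result for free Gibbs states with small perturbation (the paper's Corollary~\ref{Gibbs_state_unique}, adapted from \cite{GMS06}) to conclude $\tau_Y=\psi_Z$, and this step is absent from your outline.
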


In the above theorem, $\ds$ is a noncommutative analogue of the gradient (see Subsection \ref{Free difference quotients, conjugate variables, and cyclic derivatives}), and $\js$ is a noncommutative analogue of the Jacobian (see Subsection \ref{mat_alg}). The formula is a version of the free Gibbs state or Schwinger--Dyson equation introduced in \cite{GS14}.

Let us recall some facts about mixed $q$-Gaussian algebras before stating our isomorphism result. Speicher \cite{Sp93} considered the following commutation relation
\begin{align}\label{qcomm}
l_i^* l_j -q_{ij} l_j l_i^* = \de_{i=j}, \quad i,j\in\nz
\end{align}
for $-1\le q_{ij}=q_{ji}\le 1$. Then Bo\.zejko and Speicher \cite{BS94} showed that \eqref{qcomm} is satisfied by $l_n=l(e_n)$, the creation operators on a Fock space $\fx_Q(\mathcal{H})$ corresponding to an orthonormal basis $(e_n)_{n\in\nz}$ of a real separable Hilbert space $\mathcal{H}$. They also studied other operator algebraic properties in the context of braid relations (a.k.a.\! the Yang--Baxter equation) which in particular include \eqref{qcomm} as a special case. Henceforth we consider $(l_n)$ as operators on $\fx_Q(\mathcal{H})$. Let $Q=(q_{ij})_{i,j\in\nz}$. We call $X_n^Q=l_n+l_n^*, n\in\nz$ the mixed $q$-Gaussian variables and call the von Neumann algebra generated by $(X_n^Q)_{n\in\nz}$ the mixed $q$-Gaussian (von Neumann) algebra, which is denoted by $\Ga_Q$ or $\Ga_Q(\mathcal{H})$ if we want to specify  $\mathcal{H}$. We call $Q =(q_{ij})_{i,j\in\nz}$ the \emph{structure array} of the mixed $q$-Gaussian algebra $\Ga_Q$. The $q$-Gaussian algebra associated to $\mathcal{H}$ for fixed $q_{ij}\equiv q$ is denoted by $\Ga_q(\mathcal{H})$. Let $\ell^p=\ell^p(\nz)$ denote the classical real sequential Banach spaces with the norm
\[
\|(x_n)_{n\in\nz}\|_p = \left(\sum_{n=1}^\8 |x_n|^p \right)^{1/p},\quad 1\le p<\8,
\]
and $\|(x_n)_{n\in\nz}\|_\8 = \sup_{n \in\nz}|x_n|$. We define
	\[
		q_\infty=\sup_{i,j\in\nz} |q_{ij}|.
	\]
Note that this is equal to $\|Q\colon \ell^1\to\ell^{\infty}\|$. For $0<p<\8$, we also define
\begin{equation}\label{qip}
Q_n(p)= \sum_{j\geq 1} |q_{nj}|^p \in [0,+\infty].
\end{equation}

\begin{theorem}\label{thmiso}
Let $R>5$ and define
\[
\pi(Q,n,R)= \frac{[(R(1-q_\8)+1) Q_n(\frac12)]^2}{(1-2q_\8)^2 -[(R(1-q_\8)+1) Q_n(\frac12)]^2}.
\]
Suppose $0<\pi(Q,n,R)<1$ for all $n\in\nz$ and
\[
		\sum_{n=1}^\8 \frac{\pi(Q,n,R)}{1-\pi(Q,n,R)} < \frac{e\log\left(\frac{R}{5}\right)}{R\left( R+ \frac{4}{R-\sup_n \|X_n^Q\|}\right)}.
	\]
Then $C^*(X_n^Q:{n\in\nz})\cong C^*(X_n: n\in \nz)$ and $\Ga_Q(\ell^2)\cong \Ga_0(\ell^2)\cong  L(\fz_\8)$. Here $(X_n: n\in\nz)$ is a free semicircular system.
\end{theorem}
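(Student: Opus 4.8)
The strategy is to verify the hypotheses of Theorem \ref{transport_thm} with $Z=(X_n^Q)_{n\in\nz}$, taking $\psi$ to be the (tracial) vacuum state on $M=\Ga_Q(\ell^2)$, and then to apply it. First, since $0<\pi(Q,n,R)<1$ for all $n$ forces $q_\8<\tfrac12$ (use $\pi<1\iff (R(1-q_\8)+1)Q_n(\tfrac12)<(1-2q_\8)/\sqrt2$ and $\sup_nQ_n(\tfrac12)\ge\sqrt{q_\8}$), the Bo\.zejko--Speicher estimate $\|X_n^Q\|\le 2/\sqrt{1-q_\8}<2\sqrt2$ gives $R>5>\sup_n\|X_n^Q\|$, so $R>\max\{5,\sup_n\|X_n^Q\|\}$ holds and the right-hand side of the displayed inequality is a positive real number; inspecting the proof of Theorem \ref{transport_thm} one sees that this number is exactly the constant $C(R)$ (which through that proof also depends on $\sup_n\|Z_n\|$) for the present $Z$. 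It then remains (i) to exhibit a self-adjoint $W\in\ps(Z)^{(R)}$ satisfying the free Gibbs / Schwinger--Dyson equation of Theorem \ref{transport_thm}, and (ii) to show $\|W\|_R<C(R)$.

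For (i) I would compute the conjugate variables $\xi_n$ of the mixed $q$-Gaussian family relative to the free difference quotients. In the single-$q$ case Dabrowski \cite{Dab} showed these are convergent power series in the generators with $\xi_n-X_n^Q=\ds_n W$ for an explicit ``$q$-deformed'' potential $W=W^*$; the mixed finite-variable analogue is the computation behind \cite{NZ15}. Rerunning that argument with the index set $\nz$ produces $W$ as an (a priori formal) power series assembled from the operators that implement the mixed $q$-deformation on the full Fock space, organized as $W=\sum_n W_n$ with $W_n$ the part tied to the $n$-th conjugate variable. Once $W\in\ps(Z)^{(R)}$ and $\xi_n-X_n^Q=\ds_n W$ are established, the Schwinger--Dyson equation is automatic from the definition of conjugate variables.

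The crux is (ii). I would bound $\|W_n\|_R$ by expanding $W_n$ into homogeneous components and estimating the $\|\cdot\|_R$-norm of each word via Bo\.zejko-type norm estimates for the mixed $q$-Fock space: a word picks up, per letter, a factor governed by $R$ and by $q_\8=\|Q\colon\ell^1\to\ell^\8\|$, together with pairing factors of size $|q_{nj}|^{1/2}$ --- the square root being the familiar artifact of a Cauchy--Schwarz step against the creation/annihilation operators, which is why the relevant quantity is $Q_n(\tfrac12)=\sum_j|q_{nj}|^{1/2}$ rather than $Q_n(1)$. Resumming the resulting geometric series shows $W_n$ converges in $\|\cdot\|_R$, and, using $\pi(Q,n,R)<1$, that $\|W_n\|_R$ is dominated by $\pi(Q,n,R)/(1-\pi(Q,n,R))$; summing over $n$ gives $\|W\|_R\le\sum_n\pi(Q,n,R)/(1-\pi(Q,n,R))$, and the same computation identifies the constant $C(R)$ of Theorem \ref{transport_thm} with the displayed expression, so the hypothesis of Theorem \ref{thmiso} reads precisely $\|W\|_R<C(R)$. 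Theorem \ref{transport_thm} then yields $C^*(X_n^Q\colon n\in\nz)\cong C^*(X_n\colon n\in\nz)$ and $\Ga_Q(\ell^2)\cong L(\fz_\8)$; since $\Ga_0(\ell^2)$ is generated by a free semicircular system it equals $L(\fz_\8)$ by Voiculescu \cite{VDN}, so the full chain $\Ga_Q(\ell^2)\cong\Ga_0(\ell^2)\cong L(\fz_\8)$ follows. I expect the main obstacle to be exactly this infinite-variable bookkeeping: in the finite case \cite{NZ15} it suffices that $\|W\|_R$ be finite and small, whereas here $W$ is a genuine infinite series, so the per-generator estimates must be summable --- passing from the $\sup$-type control of \cite{Dab} (which degenerates as the number of generators grows) to the summability in $n$ encoded by $\sum_n\pi(Q,n,R)/(1-\pi(Q,n,R))<\infty$ is what makes the infinite case work and is the new content here.
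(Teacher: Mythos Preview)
Your overall strategy coincides with the paper's: construct the conjugate variables $\xi_n$ for the mixed $q$-Gaussian generators as convergent power series (via the deformed derivations $\partial_n^{(Q)}(\cdot)=\partial_n(\cdot)\#\Xi_n$ and the invertibility of $\Xi_n$ under $\pi(Q,n,R)<1$, exactly as in \cite{Dab,NZ15} with index set $\nz$), package $\xi_n-X_n^Q=\ds_nW$ into a self-adjoint potential perturbation $W$, bound $\|W\|_R$, and invoke Theorem~\ref{transport_thm}. The paper makes $W$ explicit as $W=\Sigma\bigl(\tfrac12\sum_n X_n^Q(\xi_n-X_n^Q)+(\xi_n-X_n^Q)X_n^Q\bigr)$ and uses Dabrowski's identity $\partial_n(\xi_j)^\diamond=\partial_j(\xi_n)$ to verify $\ds_nW=\xi_n-X_n^Q$; your sketch leaves this implicit but is headed to the same place.

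One genuine bookkeeping slip: you place the factor $R\bigl(R+\tfrac{4}{R-\sup_n\|X_n^Q\|}\bigr)$ inside the transport constant $C(R)$ and claim $\|W\|_R\le\sum_n\pi(Q,n,R)/(1-\pi(Q,n,R))$. In fact the paper obtains
\[
\|\xi_n-X_n^Q\|_R\le\Bigl(R+\tfrac{4}{R-\sup_k\|X_k^Q\|}\Bigr)\frac{\pi(Q,n,R)}{1-\pi(Q,n,R)},
\]
whence $\|W\|_R\le\tfrac12 R\bigl(R+\tfrac{4}{R-\sup_n\|X_n^Q\|}\bigr)\sum_n\frac{\pi(Q,n,R)}{1-\pi(Q,n,R)}$, while the transport constant coming out of Theorem~\ref{transport_thm} is $\tfrac{e}{2}\log(R/5)$ (take $S\searrow 4$ and relabel $R+1$ as $R$). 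The displayed hypothesis in the theorem is precisely the inequality $\|W\|_R<\tfrac{e}{2}\log(R/5)$ after dividing through by the $R(\cdots)$ factor, so the conclusion is the same---just be aware that this factor originates in the estimate for $\|\xi_n-X_n^Q\|_R$ (one $R$ from $\|X_n^Q\|_R$ in the formula for $W$, and $R+\tfrac{4}{R-\sup_k\|X_k^Q\|}$ from the adjoint formula for $\partial_n^{(Q)*}$ applied to $\Xi_n^{-1}-1\otimes1$), not in $C(R)$.
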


Morally speaking, the conditions for $Q$ in the above theorem require $Q$ to be ``small''. For instance, $q_{ij}=q^{i+j-1}$ with $|q|<0.0002488$ will satisfy the condition for $R=6.7$. On the other hand, some sort of smallness condition seems to be inevitable for isomorphism results. Note that the structure array $(q_{ij}\equiv q)_{i,j\in\nz}$ of $\Ga_q(\ell^2)$ is not even bounded as an operator on $\ell^2$ unless $q=0$. Comparing the isomorphism results in \cite{GS14} and Theorem \ref{thmiso}, it seems reasonable to expect that the smallness condition should take into account the number of generators in the framework of free monotone transport. In other words, assuming only $\sup_{i,j\in\nz} |q_{ij}|$ small may not be sufficient to obtain isomorphism results via free monotone transport.

Let us elaborate on the relationship between our work and previous ones \cites{GS14,Dab,NZ15}. The dimension of the underlying Hilbert space $\mathcal{H}$ for the Bo\.zejko--Speicher algebras plays a non-trivial role in various problems. For instance, it seems easier to prove the factoriality of the Bo\.zejko--Speicher von Neumann algebras when the dimension of $\mathcal{H}$ is infinite; see \cite{BKS, Kr00, Ri05}. However, one immediately faces two major difficulties when considering free monotone transport for infinite generators. First, some quantities used in the $N$-generator case are no longer valid in the infinite generator case. We have to employ some new ideas and techniques to circumvent these quantities and the related argument. For a simple example, in the finite generator case, it is natural to consider the quadratic potential $V(X_1,...,X_N)=\frac12 \sum_{i=1}^N X_i^2$ which characterizes the free semicircular law as a free Gibbs state. This polynomial is a convergent power series with infinite radius of convergence. However, the series $\frac12\sum_{i=1}^\infty X_i^2$ has radius of convergence at most zero. We overcome this difficulty by considering directly the difference of two potentials which is required to be a convergent power series so that it lives in the Banach algebra $\ps(X)^{(R)}$.

Second and more conceptually, when applying the free monotone transport theory to the $q$-Gaussian algebras, one has to construct the conjugate variables to the free difference quotients as considered in \cite{Dab}. This puts serious restrictions on the value of $q$. For instance, an immediate requirement on $q$ is $q^2N<1$. Therefore, it seems impossible to apply the free monotone transport theory to the case $N=\8$. Due to this fact, it might not be plausible to develop this theory for infinite generators in \cite{GS14}. However, in the context of mixed $q$-Gaussian algebras, we have more flexibility to further deform the algebra so that the conjugate variables exist and are close to the mixed $q$-Gaussian variables. This also explains our motivation to extend the free monotone transport theorem to deal with infinite generators.

The paper is organized as follows. In Section 2, we provide some preliminary facts on mixed $q$-Gaussian algebras and free probability theory. We extend various techniques of \cite{GS14} to the setting of infinite generators in Section 3, and use them to construct free monotone transport in Section 4. The isomorphism result for mixed $q$-Gaussian algebras is proved in Section 5.

\section{Preliminaries}

\subsection{Mixed $q$-Gaussian algebras}\label{mixed_q_prelim}

We denote by $\bx(\mathcal{H})$ the bounded operators on a Hilbert space $\mathcal{H}$. Let $\{e_n\}_{n\in\nz}$ denote the standard basis of $\ell^2$: $e_n$ denotes the unit vector supported on $\{n\}$. We will always use an underline to denote a multi-index $\vec{j}=(j_1,...,j_d)\in \nz^d$.

We refer the readers to \cites{BS94, LP99, JZ15} for the following facts on mixed $q$-Gaussian algebras. Let $Q=\{q_{ij}\}_{i,j\in\nz}\subset [-1,1]$ satisfy $q_{ij}=q_{ji}$ for all pairs $i,j\in\nz$. We define an inner product
\[
\lge e_{i_1}\otimes\cdots\otimes e_{i_m}, e_{j_1}\otimes\cdots\otimes e_{j_n} \rge_Q = \de_{m,n} \sum_{\si\in S_n} a(\si,\vec{j}) \lge e_{i_1}, e_{j_{\si^{-1}(1)}}\rge \cdots \lge e_{i_m}, e_{j_{\si^{-1}(n)}}\rge
\]\\
on
	\[
		\cz\Omega\oplus \bigoplus_{d\geq 1} (\ell^2)^{\otimes d}
	\]
and denote the completion by $\fx_Q$. Here $\Om$ is the vacuum state and $a(\si,\vec{j})$ is a product of $(q_{kl})$ whose exact value is not used in this paper; see \cite{LP99,JZ15,NZ15} for the precise value.  Then if $l(e_n)$ denotes the left creation operator
	\begin{align*}
		l(e_n)\Omega &= e_n\\
		l(e_n) e_{i_1}\otimes\cdots\otimes e_{i_d}&= e_n\otimes e_{i_1}\otimes\cdots\otimes e_{i_d},
	\end{align*}
then its adjoint is given by the left annihilation operator
	\begin{align*}
		l(e_n)^*\Omega&=0\\
		l(e_n)^* e_{i_1}\otimes\cdots\otimes e_{i_d} &= \sum_{k=1}^d \delta_{n=i_k} q_{n i_1}\cdots q_{n i_{k-1}} e_{i_1}\otimes\cdots \otimes \hat{e}_{i_k}\otimes\cdots\otimes e_{i_d},
	\end{align*}
where $\hat e_{i_k}$ means that $e_{i_k}$ is omitted in the tensor product. If we write $l_n=l(e_n)$ and $l_n^*=l(e_n)^*$, then these $l_n$, $n\in\nz$, satisfy the commutation relation \eqref{qcomm}. We will also need the right creation operator $r(e_n)$ defined by
\begin{align*}
r(e_n)(e_{i_1}\otimes \cdots \otimes e_{i_d}) = e_{i_1}\otimes \cdots \otimes e_{i_d} \otimes e_n.
\end{align*}
It follows from \cite{BS94} that
	\begin{align}\label{lnorm}
		\|l(e_n)\| = \|r(e_n)\| = \left\{\begin{array}{cl} \frac{1}{\sqrt{1-q_{nn}}} & \text{if }q_{nn}\in [0,1),\\
									1 & \text{if }q_{nn}\in (-1,0]. \end{array}\right.
	\end{align}
For each $n\in\nz$, we denote $X_n^Q=l(e_n)+l(e_n)^*$. Then $X_n^Q$, $n\in\nz$ are the mixed $q$-Gaussian variables and the mixed $q$-Gaussian algebra $\Ga_Q$ (=$\Ga_Q(\ell^2)$) is the von Neumann algebra generated by $\{X_n^Q\colon n\in\nz\}$.

We will always assume $q_\8:=\sup_{i,j\in\nz} |q_{ij}|<1$ in this paper. By \cite{BS94}, there is a normal faithful tracial state on $\Ga_Q(\ell^2)$ given by $\tau_Q(T)=\lge T\Om, \Om\rge_Q$. Moreover, there is a canonical unitary isomorphism between $L^2(\Ga_Q, \tau_Q)$ and $\fx_Q$ which extends continuously from
\[
T\mapsto T\Om,\quad T\in \Ga_Q.
\]
We will write $\lge \cdot, \cdot\rge_{\tau_Q}$ for the inner product of $L^2(\Ga_Q,\tau_Q)$. The Wick word (or Wick product) of $e_{i_1}\otimes\cdots \otimes e_{i_n}$ is denoted by $W(e_{i_1}\otimes\cdots \otimes e_{i_n})$, which is the unique element in $\Ga_Q$ satisfying
\[
W(e_{i_1}\otimes\cdots \otimes e_{i_n}) \Om= e_{i_1}\otimes\cdots \otimes e_{i_n}.
\]
The Wick words can be defined inductively and have a normal form in terms of creation and annihilation operators; see \cites{BKS,Kr00}. According to \cite{BS94}, there exists a strictly positive operator $P^{(n)}$ for each $n\in \nz$ such that
\begin{align}\label{pninn}
\lge \xi,\eta\rge_Q = \de_{n=m} \lge \xi, P^{(n)} \eta\rge_0, \quad \xi\in (\ell^2)^{\otimes n}, \eta\in (\ell^2)^{\otimes m}.
\end{align}
Here $\lge\cdot, \cdot\rge_0$ is the inner product of the full Fock space associated to the free semicircular variables; see \cite{VDN}. By \cite{Boz}*{Theorem 2}, we have
\[
 \|(P^{(n)})^{-1}\|\le \Big[(1-q_\8)\prod_{k=1}^\8\frac{1+q_\8^k}{1-q_\8^k}\Big]^{n}.
\]
When $q_\8<\frac12$, we deduce from the Gauss identity that
\begin{equation}\label{pnorm}
  \|(P^{(n)})^{-1}\|\le \Big[(1-q_\8) \Big(\sum_{k=-\8}^\8(-1)^k q_\8^{k^2}\Big)^{-1}\Big]^{n}\le \Big(\frac{1-q_\8}{1-2q_\8}\Big)^{n}.
\end{equation}


\subsection{Bimodule notations and conventions}\label{bimodule_notations}

Let $\ax$ be a unital algebra, and $\ax^{op}$ its opposite algebra with multiplication $a^{op}\cdot b^{op}=(ba)^{op}$, $a,b\in \ax$. We will often consider algebraic tensor products of the form $\ax\otimes \ax^{op}$, which we view as an $\ax$-$\ax$ bimodule with the action defined by
	\[
		x\cdot (a\otimes b^{op})\cdot y = (xa)\otimes (by)^{op},\qquad x,y,a,b\in \ax
	\]
As an algebra, $\ax\otimes \ax^{op}$ has a natural multiplication defined by
	\[
		( x\otimes y^{op}, a\otimes b^{op}) \mapsto (xa) \otimes (by)^{op}.
	\]
Following \cite{GS14}, we will denote this multiplication with the symbol ``$\#$'' so that we may henceforth repress the ``$op$'' notation from elements in $\ax\otimes \ax^{op}$.

If $\ax$ is a $*$-algebra, with involution $a\mapsto a^*$, then $\ax\otimes \ax^{op}$ inherits three involutions which we denote
	\begin{align*}
		(a\otimes b)^* &= a^* \otimes b^*\\
		(a\otimes b)^\dagger &= b^*\otimes a^*\\
		(a\otimes b)^\diamond & = b\otimes a, \qquad a\otimes b\in \ax\otimes\ax^{op}.
	\end{align*}
We will also use the notation $\diamond(\eta):= \eta^\diamond$ for $\eta\in \ax\otimes \ax^{op}$. If $\phi\colon \ax\to \cz$ is a faithful state, we let $\|x\|_\phi$ denote the GNS (or $L^2$) norm associated to $\phi$ and $\|\cdot\|_{\phi\otimes \phi^{op}}$ denote the norm induced by the inner produced defined by
	\[
		\lge a\otimes b, c\otimes d\rge_{\phi\otimes\phi^{op}} = \phi\otimes \phi^{op}( c^*\otimes d^* \# a\otimes b)=\phi(c^*a)\phi(bd^*).
	\]


\subsection{Free difference quotients, conjugate variables, and cyclic derivatives}\label{Free difference quotients, conjugate variables, and cyclic derivatives}

Suppose $X=(X_n)_{n\in \nz}$ is a sequence of algebraically free self-adjoint operators generating a tracial von Neumann algebra $(M,\tau)$. Let $\ps$ denote the noncommutative polynomials in the $X_n$, $n\in\nz$. After \cite{Voi98}, we define for each $n\in\nz$ the \emph{$n$-th free difference quotient} $\partial_n\colon \ps\to \ps\otimes\ps^{op}$ by
	\begin{align*}
		\partial_n(X_k)&=\delta_{n=k} 1\otimes 1\\
		\partial_n(AB) &= \partial_n(A)\cdot B + A\cdot \partial_n(B),\qquad A,B\in \mathscr{P}.
	\end{align*}
Note that $\partial_n(P)^\dagger = \partial_n(P^*)$ for $P\in \ps$.

The \emph{$n$-th conjugate variable} is a vector $\xi_n\in L^2M$ such that
	\begin{align*}
		\lge P, \xi_n\rge_\tau = \lge \partial_n P, 1\otimes 1\rge_{\tau\otimes\tau^{op}}, \qquad \forall P\in \mathscr{P}.
	\end{align*}
	
\begin{defn}
We say $X=(X_n)_{n\in\nz}$ \emph{has conjugate variables} if the $n$-th conjugate variable exists for every $n\in \nz$.
\end{defn}	

By applying \cite[Theorem 2.5]{MSW17} to finite subsets of $\{X_n\colon n\in\nz\}$, one can drop the condition of algebraic freeness when $X$ has conjugate variables.

Viewing the $n$-th free difference quotient as a densely defined map $\partial_n\colon L^2M\to L^2M\otimes L^2M^{op}$, it is clear that $\xi_n=\partial_n^*(1\otimes 1)$, provided either exists. This characterization leads to the following well-known formula (\emph{cf.} \cite[Proposition 4.1]{Voi98}, the proof of \cite[Theorem 34]{Dab}, or \cite[Corollary 2.4]{Ne15} for the computation.)

\begin{prop}\label{clos_deriv}
Let $\partial$ be a real (i.e. $\partial(P^*)=\partial(P)^\dagger$) derivation with domain $\ax$, a unital $*$-subalgebra of a tracial von Neumann algebra $(M,\tau)$, valued in $\ax\otimes\ax^{op}$. We think of $\partial$ as a densely defined map $\partial\colon L^2 \ax\to L^2M \otimes L^2M^{op}$ with adjoint $\partial^*\colon L^2M\otimes L^2M^{op}\to L^2M$. If $1\otimes 1\in \Dom(\partial^*)$, then $\ax\otimes\ax^{op}\subset \Dom(\partial^*)$ with
	\begin{equation}\label{adjoint_formula}
		\partial^*(\eta) = \eta\#\partial^*(1\otimes 1) - m\circ(1\otimes \tau\otimes 1)\circ(1\otimes\partial + \partial\otimes 1)(\eta),\qquad \eta\in \ax\otimes\ax^{op}
	\end{equation}
where $(a\otimes b)\#c=acb$ and $m(a\otimes b)=ab$. Furthermore, if $\ax$ is dense in $L^2M$ then $\partial$ is closable.
\end{prop}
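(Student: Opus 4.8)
The plan is to derive both assertions from the single hypothesis $1\otimes 1\in\Dom(\partial^*)$ by the standard integration‑by‑parts manipulation; throughout, write $\xi:=\partial^*(1\otimes 1)\in L^2M$. Since $\Dom(\partial^*)$ is a linear subspace and the elementary tensors $a\otimes b$ with $a,b\in\ax$ span $\ax\otimes\ax^{op}$, it suffices to show that each such $a\otimes b$ lies in $\Dom(\partial^*)$ with $\partial^*(a\otimes b)$ equal to the right‑hand side of \eqref{adjoint_formula}; linearity then gives the general formula.

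To identify $\partial^*(a\otimes b)$, fix $a,b\in\ax$ and an arbitrary test element $P\in\ax$, and apply the Leibniz rule to $a^*Pb^*\in\ax$:
\[
  \partial(a^*Pb^*)=\partial(a^*)\cdot(Pb^*)+a^*\cdot\partial(P)\cdot b^*+(a^*P)\cdot\partial(b^*),
\]
where $x\cdot(c\otimes d)\cdot y=(xc)\otimes(dy)$ is the bimodule action. Since $a^*Pb^*\in\Dom(\partial)$ and $1\otimes 1\in\Dom(\partial^*)$, pairing the left‑hand side with $1\otimes 1$ in $L^2M\otimes L^2M^{op}$ yields $\lge a^*Pb^*,\xi\rge_\tau$, which by traciality equals $\lge P,\,(a\otimes b)\#\xi\rge_\tau$. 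On the right‑hand side, pairing the middle term $a^*\cdot\partial(P)\cdot b^*$ with $1\otimes 1$ reproduces exactly $\lge\partial P,\,a\otimes b\rge_{\tau\otimes\tau^{op}}$, by the definitions of the bimodule action and of the inner product on $L^2M\otimes L^2M^{op}$. For the two outer terms, after pairing with $1\otimes 1$ one uses the reality condition $\partial(a^*)=\partial(a)^\dagger$, $\partial(b^*)=\partial(b)^\dagger$ together with traciality to recognize the resulting scalar expressions as $\lge P,\,m\circ(1\otimes\tau\otimes 1)\circ(\partial\otimes 1)(a\otimes b)\rge_\tau$ and $\lge P,\,m\circ(1\otimes\tau\otimes 1)\circ(1\otimes\partial)(a\otimes b)\rge_\tau$, respectively. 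Rearranging the identity gives, for every $P\in\ax$,
\[
  \lge\partial P,\,a\otimes b\rge_{\tau\otimes\tau^{op}}=\lge P,\ (a\otimes b)\#\xi-m\circ(1\otimes\tau\otimes 1)\circ(1\otimes\partial+\partial\otimes 1)(a\otimes b)\rge_\tau.
\]
The vector inside the bracket on the right is a fixed finite sum of products of elements of $\ax$ with $\xi$, hence lies in $L^2M$; this exhibits $a\otimes b\in\Dom(\partial^*)$ with $\partial^*(a\otimes b)$ equal to it, which is \eqref{adjoint_formula}. Extending linearly yields $\ax\otimes\ax^{op}\subseteq\Dom(\partial^*)$ with the stated formula.

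For the last assertion, suppose $\ax$ is dense in $L^2M$, so that $\partial$ is a densely defined operator from $L^2M$ into $L^2M\otimes L^2M^{op}$. A densely defined operator between Hilbert spaces is closable if and only if its adjoint has dense domain; by the previous step $\Dom(\partial^*)\supseteq\ax\otimes\ax^{op}$, which is dense in $L^2M\otimes L^2M^{op}$ since $\ax$ is dense in $L^2M$. Hence $\partial^*$ is densely defined and $\partial$ is closable.

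The only genuine work is in the second paragraph: tracking the three involutions $*,\dagger,\diamond$ and the left/right bimodule actions while moving $a^*$ and $b^*$ across $\partial$, and applying traciality in exactly the right places so that the two outer Leibniz terms collapse into the $(1\otimes\tau\otimes 1)$‑contractions. The remaining ingredients — linearity, density, and the abstract closability criterion — are routine, which is why the statement may be quoted from \cite{Voi98, Dab, Ne15}.
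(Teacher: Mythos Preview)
Your proof is correct and follows precisely the standard integration-by-parts argument that the paper defers to via its citations of \cite[Proposition 4.1]{Voi98}, \cite[Theorem 34]{Dab}, and \cite[Corollary 2.4]{Ne15}; the paper itself gives no independent proof. Your final paragraph already acknowledges this, and the Leibniz/traciality bookkeeping you outline matches those references.
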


After \cite{Voi02}, we define for each $n\in\nz$ the \emph{$n$-th cyclic derivative} $\ds_n\colon \ps\to \ps$  by
	\begin{align*}
		\ds_n(p)= \sum_{p=AX_n B} BA,
	\end{align*}
for $p\in\ps$ a monomial (where a sum over the empty set is considered to be zero), and extend linearly to $\ps$. Note that $\ds_n (P)^*=\ds_n(P^*)$ and $\ds_n = m\circ\diamond\circ \partial_n$. For $P\in \ps$, the sequence $\ds P:=(\ds_n P)_{n\in \nz}$ is called the \emph{cyclic gradient of $P$}.


\subsection{Free transport}

Let $X=(X_n)_{n\in\nz}$ be a sequence of self-adjoint operators in a von Neumann algebra $M$ with a faithful normal state $\varphi$, which we regard as a noncommutative probability space $(M,\varphi)$. The \emph{joint law of $X$ with respect to $\varphi$} is a linear functional $\varphi_X$ defined on polynomials in non-commuting self-adjoint indeterminates $\{T_n\colon n\in\nz\}$ by
	\[
		\varphi_X(T_{i_1}\cdots T_{i_d}) = \varphi(X_{i_1}\cdots X_{i_d}),\qquad \forall \vec{i}\in \nz^d.
	\]

Let $Z=(Z_n)_{n\in\nz}$ be another sequence in a von Neumann algebra $N$ with a faithful normal state $\psi$, and let $\psi_Z$ be the joint law of $Z$ with respect to $\psi$. Observe that if $\varphi_X=\psi_Z$ then
	\[
		W^*(X_n\colon n\in\nz) \cong W^*(Z_n\colon n\in\nz),
	\]
since $\varphi$ and $\psi$ are faithful normal states. After \cite{GS14}, we make the following definition:

\begin{defn}
\emph{Transport from $\varphi_X$ to $\psi_Z$} is a sequence $Y=(Y_n)_{n\in\nz}\subset W^*(X_n\colon n\in\nz)$ whose joint law with respect to $\varphi$ is equal to $\psi_Z$. That is, $\varphi_Y=\psi_Z$. We call such transport \emph{monotone} if $Y-X$ belongs to the $\ell^2(\nz, L^2(M,\varphi))$-closure of the set
	\[
		\{ \ds g\colon g\in \ps \text{ and } 1+(\partial_j \ds_i g)_{i,j\in\nz} \geq 0\},
	\]
where $(\partial_j \ds_i g)_{i,j\in\nz}\in (M\bar{\otimes} M^{op})^{\nz^2}$ is regarded as an operator on $\ell^2(\nz, L^2(M\bar{\otimes}M^{op},\varphi\otimes\varphi^{op}))$ defined by
	\[
		\ell^2(\nz, L^2(M\bar{\otimes}M^{op},\varphi\otimes\varphi^{op}))\ni (\xi_n)_{n\in \nz}\mapsto \left( \sum_j (\partial_j \ds_i g)\xi_j\right)_{i\in \nz}
	\]
(\emph{cf.} Proposition \ref{prop:M_1_bdd} for why such an action is bounded, but also note that all but finitely many of the entries $\partial_j \ds_i G$ will be zero since $G$ is a polynomial).
\end{defn}

By the above observation, whenever $Y$ is transport from $\varphi_X$ to $\psi_Z$ we have an embedding of $W^*(Z_n\colon n\in\nz)$ into $W^*(X_n\colon n\in\nz)$ by extending the map $Z_{i_1}\cdots Z_{i_d}\mapsto Y_{i_1}\cdots Y_{i_d}$, $\vec{i}\in\nz^d$.

In the classical case, transport refers to a measurable map $T$ from one probability space $(\mathcal{X},\mu)$ to another $(\mathcal{Z},\nu)$ such that $T_*\mu=\nu$. Consequently, $f\mapsto f\circ T$ is an embedding of $L^\infty (\mathcal{Z},\nu)$ into $L^\infty(\mathcal{X},\mu)$. The definition in the free case is in analogy with this observation.

In the finite variable case, the monotonicity condition is equivalent to $Y$ (rather than $Y-X$) being in the $L^2(M,\varphi)^N$-closure of the set
	\[
		\{ \ds G\colon G\in \ps \text{ and } (\partial_j \ds_i G)_{i,j=1}^N \geq 0\}.
	\]
The modification in the infinite variable case is a consequence of the identity operator $1$ not being realized as $(\partial_j\ds_i G)_{i,j}$ for any $G\in \ps$. Rather, it occurs when $G$ is the formal power series $V_0=\frac12 \sum_{i=1}^\infty X_i^2$ (\emph{cf.} Remark \ref{V_0_def}).


\subsection{Some (temporary) formalism}\label{temp_formalism}

We consider $T=(T_n)_{n\in\nz}$, a sequence of non-commuting self-adjoint indeterminates, and denote by $\ps(T)$ the noncommutative polynomials in these indeterminates. For a polynomial $P$ and a monomial $p$ in $\ps(T)$, let $c_p(P)$ denote the coefficient of the monomial $p$ in $P$. After \cite{GS14}, for each $R>0$ we define for $P\in \ps(T)$
	\begin{align*}
		\left\| P\right\|_R = \sum_{p} |c_p(P)| R^{\deg(p)},
	\end{align*}
where the sum over monomials $p$ is finite for polynomials. Due to the algebraic freeness of the indeterminates, each $\|\cdot\|_R$ is a norm. We let $\ps(T)^{(R)}$ denote the completion of $\ps(T)$ with respect to this norm, which is easily seen to be a Banach algebra and may be identified as a subalgebra of the formal power series in the $T_n$, $n\in\nz$.

\begin{rem}\label{polynomials_supported_finitely}
Note that elements in $\ps(T)^{(R)}$ are approximated by polynomials, which are each necessarily elements of an algebra generated by finitely many indeterminates $T_i$. As a consequence, many of the arguments from \cite{Dab} and \cite{GS14} adapt easily to our infinitely generated context.
\end{rem}

After \cite{GS14} we also consider the following maps on $\ps(T)^{(R)}$, for $R>0$. The \emph{number operator} $\ns$  is defined on monomials by
	\begin{align*}
		\ns T_{i_1}\cdots T_{i_d} = d T_{i_1}\cdots T_{i_d},
	\end{align*}
and extended linearly to $\ps(T)^{(R)}$. We let $\Pi\colon P\mapsto P-P(0)$, the projection onto power series with zero constant term. We let $\Sigma$ be the inverse of $\ns$ on $\Pi \ps(T)^{(R)}$:
	\begin{align*}
		\Sigma T_{i_1}\cdots T_{i_d} = \frac{1}{d} T_{i_1}\cdots T_{i_d},
	\end{align*}
which we extend to $\ps(T)^{(R)}$ by letting it annihilate constants. Finally, we let $\ss$ denote the map on $\Pi \ps(T)^{(R)}$ defined on monomials by
	\begin{align*}
		\ss T_{i_1}\cdots T_{i_d} = \frac{1}{d} \sum_{k=1}^d T_{i_{k+1}}\cdots T_{i_d} T_{i_1}\cdots T_{i_k}.
	\end{align*}
After \cite{GS14}, we say the elements in the range of $\ss$ are \emph{cyclically symmetric}; that is, for every $\vec{i}\in \nz^d$ the coefficient of the monomial $T_{i_1}\cdots T_{i_d}$ is the same as the coefficient of $T_{i_d}T_{i_1}\cdots T_{i_{d-1}}$. Observe that $\ss$ is contractive with respect to the $\|\cdot\|_R$-norm.

We note that if $Y=(Y_n)_{n\in \nz}$ is a sequence of elements in a Banach algebra $\bx$ and $R\geq \sup_i \|Y_i\|_{\bx}$, then $T_{i_1}\cdots T_{i_n}\mapsto Y_{i_1}\cdots Y_{i_n}$ extends to a contractive map on $\ps(T)^{(R)}$. For $P\in \ps(T)^{(R)}$, we shall let $P(Y)$ denote the image of $P$ under this map, and let $\ps(Y)^{(R)}$ denote the range of the map. In general, this map may fail to be an embedding, but when it is $\|P(Y)\|_R:=\|P\|_R$ defines a norm on $\ps(Y)^{(R)}\subset \bx$.

It follows from \cite[Lemma 37]{Dab} that if $\bx$ is a tracial von Neumann algebra and we consider finite sequences $T=(T_1,\ldots, T_N)$ and $Y=(Y_1,\ldots, Y_N)$,  then the aforementioned contractive mapping for $R>\max_{1\leq i\leq N} \|Y_i\|$ is an embedding provided the conjugate variables to $Y_1,\ldots, Y_N$ exist. It is easy to see that this proof adapts to case of infinite sequences $T=(T_n)_{n\in\nz}$ and $Y=(Y_n)_{n\in\nz}$, especially in light of Remark \ref{polynomials_supported_finitely}, and so we record the following corollary:

\begin{cor}\label{R_embedding}
Suppose $X=(X_n)_{n\in \nz}$ is a uniformly bounded sequence of self-adjoint operators in a tracial noncommutative probability space $(M,\tau)$. If the $n$-th conjugate variable $\xi_n$ exists for all $n\in \nz$ and $R> \sup_n \|X_n\|$, then the map $\ps(T)^{(R)}\ni P\mapsto P(X)\in M$ is an embedding and consequently $\|P(X)\|_R:=\|P\|_R$ is a norm on $\ps(X)^{(R)}$.
\end{cor}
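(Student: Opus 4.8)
The plan is to reduce Corollary~\ref{R_embedding} to \cite[Lemma 37]{Dab} by a finite-dimensional approximation, exactly along the lines indicated in Remark~\ref{polynomials_supported_finitely}. First I would fix $R>\sup_n\|X_n\|$ and observe that the candidate map $\Phi\colon\ps(T)^{(R)}\to M$, $P\mapsto P(X)$, is well defined and contractive by the remark in Subsection~\ref{temp_formalism} (since $R\geq\sup_n\|X_n\|$). To prove it is injective, suppose $P\in\ps(T)^{(R)}$ with $P(X)=0$; since $P$ is the $\|\cdot\|_R$-limit of polynomials, and each polynomial lives in $\ps(T_1,\ldots,T_N)$ for some finite $N$, the truncations $P_N$ of $P$ (keeping only monomials in $T_1,\ldots,T_N$) satisfy $P_N\to P$ in $\|\cdot\|_R$ and hence $P_N(X)\to P(X)=0$ in $M$ (in operator norm, since $\Phi$ is contractive into $M$, or at least in $L^2(M,\tau)$). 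So it suffices to control each $P_N$ separately inside the finitely generated subalgebra.

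Next I would invoke the finite-variable embedding result. For each fixed $N$, the operators $X_1,\ldots,X_N$ generate a tracial von Neumann algebra $(M_N,\tau|_{M_N})$, and by hypothesis the conjugate variables $\xi_1,\ldots,\xi_n$ for all $n\in\nz$ exist; restricting to the first $N$, the conjugate variables of $X_1,\ldots,X_N$ \emph{within $M_N$} exist as well. Strictly, one must check that the conjugate variable of $X_k$ relative to the subalgebra $M_N$ exists given that it exists relative to all of $M$; this follows from the fact that the free difference quotient $\partial_k$ on $\ps(X_1,\ldots,X_N)$ is the restriction of the one on $\ps(X_n\colon n\in\nz)$, and conditional expectation onto $L^2(M_N)\otimes L^2(M_N)^{op}$ does not increase the relevant inner product, so $1\otimes 1$ remains in the domain of the adjoint. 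With that in hand, \cite[Lemma 37]{Dab} applies to $T=(T_1,\ldots,T_N)$ and $Y=(X_1,\ldots,X_N)$ with $R>\max_{1\leq i\leq N}\|X_i\|$, giving that $\ps(T_1,\ldots,T_N)^{(R)}\ni Q\mapsto Q(X)\in M_N$ is an embedding; in particular $\|Q(X)\|_R:=\|Q\|_R$ is a genuine norm on the image.

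Finally I would assemble these pieces. Given $P$ with $P(X)=0$, write $P=\lim_N P_N$ with each $P_N\in\ps(T_1,\ldots,T_N)^{(R)}$ (here I allow $P_N$ itself to be a power series, not just a polynomial — the truncation of a $\|\cdot\|_R$-convergent series is again $\|\cdot\|_R$-convergent and supported on $T_1,\ldots,T_N$). For fixed $N$, the sequence $(P_M)_{M\geq N}$ need not be supported on $T_1,\ldots,T_N$, so instead I truncate in the other order: since $P_N(X)\to 0$ and $P_N\in\ps(T_1,\ldots,T_N)^{(R)}$, if I also knew $P_N(X)$ were eventually controlled I would be done, but the cleanest route is to note $P_N$ is the image of $P$ under the norm-decreasing ``restriction'' projection onto monomials in $T_1,\ldots,T_N$, hence $\|P_N\|_R\leq\|P\|_R$ and $P_N\to P$, while $P_N(X)=0$ would force $\|P_N\|_R=\|P_N(X)\|_R=0$ once we know $P_N(X)=0$ — and $P_N(X)=E_N(P(X))$ where $E_N$ is a suitable contraction (or more directly, $P_N(X)$ is the corresponding restriction of $P(X)=0$, hence $0$). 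Applying the finite-variable embedding to each $N$ gives $P_N=0$ for all $N$, whence $P=\lim_N P_N=0$. This shows $\Phi$ is injective, and then $\|P(X)\|_R:=\|P\|_R$ is well defined and a norm on $\ps(X)^{(R)}$ because $\Phi$ is a contractive bijection onto its range whose inverse is bounded below (norm-preserving) on each finitely-supported piece and hence on the whole space by density. The main obstacle I expect is the bookkeeping in this last step — namely verifying carefully that ``restriction to the first $N$ indeterminates'' is compatible with evaluation, i.e.\ that the truncation of $P(X)$ equals $P_N(X)$ in the appropriate sense — together with the subsidiary point that conjugate variables relative to the big algebra descend to conjugate variables relative to each finitely generated subalgebra; both are routine but must be stated precisely.
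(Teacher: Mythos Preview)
Your approach has a genuine gap in the final assembly step. You need $P_N(X)=0$ for each $N$ in order to invoke the finite-variable embedding, and you propose obtaining this via $P_N(X)=E_N(P(X))$ for ``a suitable contraction'' $E_N$, or by saying $P_N(X)$ is ``the corresponding restriction of $P(X)$''. Neither works. The trace-preserving conditional expectation $E_{M_N}\colon M\to M_N$ does \emph{not} satisfy $E_{M_N}(Q(X))=(\pi_N Q)(X)$: already $E_{M_1}(X_2^2)=\tau(X_2^2)\cdot 1\neq 0$ while $\pi_1(T_2^2)=0$, and more generally monomials containing indices $>N$ can have nonzero conditional expectation onto $M_N$ since no freeness is assumed. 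Nor does knowing only $P_N(X)\to 0$ suffice, because the finite embeddings $\ps(T_1,\ldots,T_N)^{(R)}\hookrightarrow M_N$ have unbounded inverse --- already for one variable $\|T^d\|_R/\|X^d\|\ge (R/\|X\|)^d\to\infty$ --- so no uniform control in $N$ is available. The step you flag as ``routine bookkeeping'' is in fact the entire difficulty.

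The paper's route is different: rather than reducing to the \emph{statement} of \cite[Lemma~37]{Dab}, it reruns the \emph{proof} in the infinite setting. The point of Remark~\ref{polynomials_supported_finitely} is that each step of that argument involves only finitely many variables at a time, with estimates independent of the ambient number of generators, so it carries over verbatim once one checks (as you correctly do) that the conjugate variables for $X_1,\ldots,X_N$ relative to $M_N$ exist. What fails in your proposal is only the attempt to package the reduction as a single operator-level map $E_N$ acting on $P(X)$.
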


We also note that in the context of the above corollary, one always has $\|P(X)\|_\tau\leq \|P(X)\|\leq\|P(X)\|_R$ for all $P\in \ps(T)^{(R)}$. The hypotheses of this corollary will always be satisfied in Sections 3 and 4, and so we abandon the formal indeterminates $T$ until Section 5.

\section{The infinite variable formalism}\label{Notation}

We now fix $X=(X_n)_{n\in \nz}$ a uniformly bounded sequence of self-adjoint operators that generates a tracial von Neumann algebra $(M,\tau)$ and has conjugate variables. We write $\ps=\ps(X)$ and $\ps^{(R)}=\ps(X)^{(R)}$ for $R>\sup_n \|X_n\|$, and $R$ will always be assumed to be a number satisfying this inequality.


\subsection{Various norms}

Let $\ps^{(R)}_\infty$ denote $\ell^\infty(\nz,\ps^{(R)})$, the set of uniformly bounded sequences of elements of $\ps^{(R)}$, with norm
	\[
		\|(P_n)_{n\in\nz}\|_{R,\infty}:=\sup_n \|P_n\|_R.
	\]
We let $\ps^{(R)}_1$ denote $\ell^1(\nz,\ps^{(R)})$, the set of absolutely summable sequences of elements in $\ps^{(R)}$, with norm
	\[
		\|(P_n)_{n\in \nz}\|_{R,1}:= \sum_{n\in\nz} \|P_n\|_R.
	\]
Observe $\ps^{(R)}_1\subset\ps^{(R)}_\infty$, and that for $F\in \ps^{(R)}_\infty$ and $G\in \ps^{(R)}_1$
	\begin{align*}
		F\# G :=\sum_{n\in\nz} F_nG_n\qquad\text{and}\qquad G\# F := \sum_{n\in \nz} G_n F_n
	\end{align*}
define elements in $\ps^{(R)}$.

On $\ps^{(R)}\otimes (\ps^{(R)})^{op}$, we let $\|\cdot\|_{R\otimes_\pi R}$ denote the projective tensor norm:
	\begin{align*}
		\| \eta \|_{R\otimes_\pi R}:= \inf\left\{ \sum_i \|A_i\|_R\|B_i\|_R\colon \eta = \sum_i A_i\otimes B_i\right\}.
	\end{align*}
Let $\ps \hat{\otimes}_R \ps^{op}$ denote the completion of $\ps^{(R)}\otimes(\ps^{(R)})^{op}$ with respect to this norm. Observe that $\ps\otimes\ps^{op}$ is dense in $\ps\hat\otimes_R\ps^{op}$. Extending the multiplication $\#$ from Subsection \ref{bimodule_notations}, $\ps\hat\otimes_R\ps^{op}$ becomes a Banach algebra. It is also clear that the operation $(a\otimes b)\# c=acb$ extends to $\ps\hat\otimes_R \ps^{op}$ and satisfies
	\[
		\|\eta\# P\|_R\leq \|\eta\|_{R\otimes_\pi R} \|P\|_R
	\]
for $\eta\in \ps\hat\otimes_R\ps^{op}$ and $P\in \ps^{(R)}$. Additionally, since $P\mapsto P^*$ extends to an isometry on $\ps^{(R)}$, it follows that the three involutions $*$,$\dagger$, and $\diamond$ from Subsection \ref{bimodule_notations} extend to isometries on $\ps\hat{\otimes}_R\ps^{op}$, which we denote in the same way.

We also have $\ps\hat\otimes_R\ps^{op}\subset M\bar{\otimes} M^{op}$ (the von Neumann tensor product). Indeed, since $\|\cdot\|_R$ dominates the operator norm, we have $\ps^{(R)}\otimes (\ps^{(R)})^{op}\subset M\otimes M^{op}$. For $\eta\in \ps^{(R)}\otimes (\ps^{(R)})^{op}$ and $\epsilon>0$ let $\sum_i A_i\otimes B_i=\eta$ such that
	\begin{align*}
		\sum_i \|A_i\|_R\|B_i\|_R \leq \|\eta\|_{R\otimes_\pi R}+\epsilon.
	\end{align*}
Then
	\begin{align*}
		\|\eta\|_{M\bar{\otimes}M^{op}} \leq \sum_i \|A_i\| \|B_i\| \leq \sum_i \|A_i\|_R \|B_i\|_R \leq \|\eta\|_{R\otimes_\pi R}+\epsilon.
	\end{align*}
Since $\epsilon$ was arbitrary, we see that $\|\cdot\|_{M\bar\otimes M^{op}}\leq \|\cdot \|_{R\otimes_\pi R}$ and obtain the desired inclusion.  Note also that on $\ps\hat\otimes_R\ps^{op}$ we have $\|\cdot\|_{\tau\otimes \tau^{op}} \le \|\cdot\|_{M\bar\otimes M^{op}} \le \|\cdot\|_{R\otimes_\pi R}$.


\subsection{Bounds on the differential operators}

Recall that by Proposition \ref{clos_deriv} if the conjugate variable $\xi_n$ exists, then $\partial^*_n$ is densely defined and $\partial_n$ is closable as a densely defined map $\partial_n\colon L^2M \to L^2M\otimes L^2M^{op}$. Let $\bar\partial_n$ denote its closure. Given $R>S>0$, we will denote throughout the paper
	\[
		\mathscr{C}(R,S):= \sup_{t>0} t \frac{S^{t-1}}{R^t}=\frac{1}{ e S\log(R/S)}.
	\]

\begin{lemma}\label{free_difference_quotients_bounded}
Let $R>S>\sup_n \|X_n\|$. Then for every subset $I\subset \nz$, and $P\in \ps$
	\begin{align*}
		\sum_{n\in I} \|\partial_n\Sigma P\|_{R\otimes_\pi R} \leq \frac{1}{R} \|P\|_R \quad \text{and}\quad		\sum_{n\in I} \|\partial_n P\|_{S\otimes_\pi S}\leq \mathscr{C}(R,S) \|P\|_R,
	\end{align*}
Hence, $\ps^{(R)}\subset \Dom(\bar{\partial}_n)$ for each $n$ and
	\begin{align*}
		\left\| \sum_{n\in I} \bar\partial_n\Sigma \colon \ps^{(R)}\to \ps\hat\otimes_R\ps^{op}\right\| \le \frac1R \quad \text{and} \quad
		\left\| \sum_{n\in I} \bar\partial_n \colon \ps^{(R)}\to \ps\hat\otimes_S\ps^{op}\right\| \le \mathscr{C}(R,S).
	\end{align*}
\end{lemma}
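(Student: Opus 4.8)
The plan is to estimate $\sum_{n\in I}\|\partial_n p\|_{R\otimes_\pi R}$ monomial-by-monomial and then sum. Fix a monomial $p = X_{i_1}\cdots X_{i_d}$ of degree $d$. By the Leibniz rule, $\partial_n p = \sum_{k=1}^d \delta_{n=i_k}\, X_{i_1}\cdots X_{i_{k-1}}\otimes X_{i_{k+1}}\cdots X_{i_d}$, so that each term is an elementary tensor whose two legs have degrees summing to $d-1$. Hence $\|\partial_n p\|_{R\otimes_\pi R}\le \sum_{k=1}^d \delta_{n=i_k} R^{k-1}R^{d-k}$, and summing over $n\in I$ and discarding the constraint $n\in I$ only increases the bound, so $\sum_{n\in I}\|\partial_n p\|_{R\otimes_\pi R}\le \sum_{n\in\nz}\sum_{k=1}^d\delta_{n=i_k}R^{d-1} = d\,R^{d-1}$. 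For a general $P\in\ps$ with $P=\sum_p c_p(P)\,p$, the triangle inequality in $\ps\hat\otimes_R\ps^{op}$ gives $\sum_{n\in I}\|\partial_n P\|_{R\otimes_\pi R}\le \sum_p |c_p(P)|\deg(p)R^{\deg(p)-1}$.

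Now I would compare this weighted sum to $\|P\|_R=\sum_p|c_p(P)|R^{\deg(p)}$. For the first inequality, applying the above estimate to $\Sigma P$ (whose coefficient on $p$ is $c_p(P)/\deg(p)$, with constants killed) yields $\sum_{n\in I}\|\partial_n\Sigma P\|_{R\otimes_\pi R}\le \sum_p|c_p(P)|\,\frac{1}{\deg(p)}\cdot\deg(p)\,R^{\deg(p)-1}=\frac1R\sum_p|c_p(P)|R^{\deg(p)}=\frac1R\|P\|_R$, which is exactly the first claimed bound. For the second inequality, I use $\sum_{n\in I}\|\partial_n P\|_{S\otimes_\pi S}\le\sum_p|c_p(P)|\deg(p)S^{\deg(p)-1}$ and note that for each monomial $\deg(p)S^{\deg(p)-1}=\bigl(\deg(p)\tfrac{S^{\deg(p)-1}}{R^{\deg(p)}}\bigr)R^{\deg(p)}\le \mathscr{C}(R,S)\,R^{\deg(p)}$, by the very definition $\mathscr{C}(R,S)=\sup_{t>0}t\,S^{t-1}/R^t$. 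Summing over $p$ gives $\sum_{n\in I}\|\partial_n P\|_{S\otimes_\pi S}\le\mathscr{C}(R,S)\|P\|_R$. The closed-form $\mathscr{C}(R,S)=\frac{1}{eS\log(R/S)}$ is a one-variable calculus exercise: writing $f(t)=t\,S^{t-1}/R^t=\tfrac{1}{S}t\,e^{-t\log(R/S)}$, one sets $f'(t)=0$ to get $t=1/\log(R/S)$ and substitutes.

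For the operator-norm statements, I would argue that since $R>S>\sup_n\|X_n\|$ and the conjugate variables exist, each $\|\cdot\|_{\tau\otimes\tau^{op}}$ is dominated by $\|\cdot\|_{S\otimes_\pi S}$ (and by $\|\cdot\|_{R\otimes_\pi R}$) on $\ps\otimes\ps^{op}$, as recorded at the end of Subsection~\ref{temp_formalism}; consequently the $L^2$-norm bounds coming from the two displayed inequalities, valid for all polynomials $P$, show $\ps^{(R)}\subset\Dom(\bar\partial_n)$ for each $n$ because polynomials are $\|\cdot\|_R$-dense in $\ps^{(R)}$ and $\bar\partial_n$ is closed (Proposition~\ref{clos_deriv}). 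A standard closed-graph/limiting argument then transfers the inequalities from $\ps$ to all of $\ps^{(R)}$: given $P\in\ps^{(R)}$, take polynomials $P_m\to P$ in $\|\cdot\|_R$, observe $\sum_{n\in I}\bar\partial_n\Sigma P_m$ is Cauchy in $\ps\hat\otimes_R\ps^{op}$ with limit bounded by $\frac1R\|P\|_R$, and similarly for the $S$-version; closability identifies the limit with $\sum_{n\in I}\bar\partial_n\Sigma P$, respectively $\sum_{n\in I}\bar\partial_n P$.

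The only genuinely delicate point is making the interchange of the (possibly infinite) sum $\sum_{n\in I}$ with the closure operation rigorous, i.e.\ verifying that $\sum_{n\in I}\bar\partial_n\Sigma$ and $\sum_{n\in I}\bar\partial_n$ are well-defined bounded operators into $\ps\hat\otimes_R\ps^{op}$ (resp.\ $\ps\hat\otimes_S\ps^{op}$) rather than merely finitely-additive expressions; this is handled by noting that on a polynomial $P$ only finitely many $\partial_n P$ are nonzero (Remark~\ref{polynomials_supported_finitely}), so the sum is finite there, the bound is uniform over finite subsets $I$, and one passes to arbitrary $I\subset\nz$ by monotone convergence of the nonnegative series $\sum_{n\in I}\|\partial_n P\|$. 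Everything else is bookkeeping with the projective tensor norm and the explicit form of $\|\cdot\|_R$.
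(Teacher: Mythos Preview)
Your proof is correct and follows essentially the same approach as the paper: a monomial-by-monomial estimate giving $\sum_{n\in I}\|\partial_n p\|_{S\otimes_\pi S}\le d\,S^{d-1}$, followed by the bound $d\,S^{d-1}/R^d\le\mathscr{C}(R,S)$ and summation over monomials, with the $\Sigma$-version cancelling the factor of $d$. The paper's own argument is slightly terser on the extension to $\ps^{(R)}$ (it simply invokes $\|\cdot\|_\tau\le\|\cdot\|_R$ and $\|\cdot\|_{\tau\otimes\tau^{op}}\le\|\cdot\|_{R\otimes_\pi R}$), whereas you spell out the density/closability step more carefully; one small slip is that the tensor-norm inequality you cite is recorded in the ``Various norms'' subsection rather than Subsection~\ref{temp_formalism}.
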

\begin{proof}
Of the first two estimates we prove only the second, as the first will be obvious from the proof. Given $P\in \ps$ of degree $d$, write $P=\sum_{k=1}^d \sum_{\vec{i}\in \nz^k} c_{\vec{i}} X_{i_1}\cdots X_{i_k}$. Let $J\subset \nz$ be the finite set of indices $j$ such that $c_{\vec{i}}=0$ unless $\vec{i}\in J^k$ for some $k$. Thus we may write
	\[
		P=\sum_{k=1}^d \sum_{\vec{j}\in J^k} c_{\vec{j}} X_{j_1}\cdots X_{j_k}.
	\]
Then for any subset $I\subset \nz$ we have
	\begin{align*}
		\sum_{n\in I} \|\partial_n P\|_{S\otimes_\pi S} &\leq \sum_{k=1}^d \sum_{\vec{j}\in J^k}|c_{\vec{j}}| \sum_{n\in I} \|\partial_n X_{j_1}\cdots X_{j_k}\|_{S\otimes_\pi S}\\
			&\leq \sum_{k=1}^d \sum_{\vec{j}\in J^k} |c_{\vec{j}}| \sum_{l=1}^k \chi_I(j_l) S^{k-1}\\
			&\leq \sum_{k=1}^d \sum_{\vec{j}\in J^k} |c_{\vec{j}}| k \frac{S^{k-1}}{R^k} R^k\\
			&\leq \mathscr{C}(R,S) \sum_{k=1}^d \sum_{\vec{j}\in J^k} |c_{\vec{j}}| R^k\\
			& = \mathscr{C}(R,S)\|P\|_R.
	\end{align*}
This yields the desired estimate. The claims regarding $\bar\partial_n$ then follow easily from the observations that $\|\cdot\|_\tau \leq \|\cdot\|_R$ on $\ps^{(R)}$ and $\|\cdot\|_{\tau\otimes\tau^{op}} \leq \|\cdot\|_{R\otimes_\pi R}$ on $\ps\hat\otimes_R\ps^{op}$.
\end{proof}

\begin{lemma}\label{1_tensor_tau}
Let $R>\sup_n \|X_n\|$, then for every $I\subset \nz$ and $P\in \ps$ we have
	\begin{align*}
		\sum_{n\in I}& \| (1\otimes \tau)(\partial_n P)\|_R \leq \frac{1}{R-\sup_n \|X_n\|} \|P\|_R,\\
		\sum_{n\in I}& \| (\tau \otimes 1)(\partial_n P)\|_R \leq \frac{1}{R-\sup_n \|X_n\|} \|P\|_R.
	\end{align*}
\end{lemma}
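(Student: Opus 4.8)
The plan is to mimic the proof of Lemma~\ref{free_difference_quotients_bounded}, but now tracking the effect of applying $(1\otimes\tau)$ or $(\tau\otimes1)$ to a free difference quotient. The key structural fact is that $\partial_n$ of a monomial produces a sum of simple tensors of the form $X_{i_1}\cdots X_{i_{l-1}}\otimes X_{i_{l+1}}\cdots X_{i_k}$ (one for each occurrence of $X_n$ at position $l$), and applying $1\otimes\tau$ kills every such term whose second leg is a \emph{nonconstant} monomial, leaving only the terms where the deleted $X_n$ sits in the last slot, i.e. $l=k$; similarly $\tau\otimes1$ retains only the terms with $l=1$. So after applying $(1\otimes\tau)$, a monomial $X_{j_1}\cdots X_{j_k}$ contributes $\chi_{\{j_k=n\}}\cdot X_{j_1}\cdots X_{j_{k-1}}$ (and an extra scalar $\tau(1)=1$ contribution is absorbed into the zero-length case; one has to be a hair careful about the degree-$1$ monomial $X_n$ itself, where the retained term is $1\otimes 1$ and $(1\otimes\tau)(1\otimes1)=1$).

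**First I would** reduce, exactly as in the previous lemma, to a polynomial $P=\sum_{k}\sum_{\vec j\in J^k} c_{\vec j} X_{j_1}\cdots X_{j_k}$ supported on a finite index set $J$, and estimate term by term. **Then** for a fixed monomial of degree $k$ and a fixed $n\in I$, I would bound $\|(1\otimes\tau)\partial_n(X_{j_1}\cdots X_{j_k})\|_R \le \chi_I(j_k)\,\|X_{j_1}\cdots X_{j_{k-1}}\|_R = \chi_{\{j_k=n\}}\, R^{k-1}$, using that the $\|\cdot\|_R$-norm of a monomial is $R^{(\text{its degree})}$ and that $\tau$ and the trace-contraction are $\|\cdot\|_R$-contractive in the obvious sense (indeed $\|(1\otimes\tau)\eta\|_R \le \|\eta\|_{R\otimes_\pi R}$ since $|\tau(b)|\le\|b\|\le\|b\|_R$ on the second leg). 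Summing over $n\in I$ collapses the indicator to at most $1$ for each monomial, giving $\sum_{n\in I}\|(1\otimes\tau)\partial_n(X_{j_1}\cdots X_{j_k})\|_R \le R^{k-1}$.

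**Finally** I would reassemble: $\sum_{n\in I}\|(1\otimes\tau)\partial_n P\|_R \le \sum_{k\ge1}\sum_{\vec j\in J^k}|c_{\vec j}|\,R^{k-1} = \sum_{k\ge1}R^{k-1}\sum_{\vec j\in J^k}|c_{\vec j}| \le \frac1R\sum_{k\ge1} R^{k}\sum_{\vec j\in J^k}|c_{\vec j}| = \frac1R\|\Pi P\|_R \le \frac1R\|P\|_R$. This is in fact a slightly stronger bound than claimed, so to match the statement exactly one can simply note $\frac1R\le\frac1{R-\sup_n\|X_n\|}$; alternatively the weaker constant arises naturally if one uses the cruder per-term bound $\|X_{j_1}\cdots X_{j_{k-1}}\|\le(\sup_n\|X_n\|)^{k-1}$ in operator norm and sums a geometric series — but the $\|\cdot\|_R$ route is cleaner and I would present that, with the trivial comparison of constants at the end. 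The argument for $\tau\otimes1$ is identical, using that only the $l=1$ terms survive. The only mildly delicate point — and the one I'd flag as the ``obstacle'' — is bookkeeping the low-degree/constant terms correctly (the monomial $X_n$ itself, and making sure the $k=1$ contribution is handled so that the final geometric sum telescopes against $\|P\|_R$); everything else is a routine repetition of the template already in the excerpt.
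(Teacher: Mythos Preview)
Your argument contains a genuine error in the key structural claim. You assert that ``applying $1\otimes\tau$ kills every such term whose second leg is a nonconstant monomial, leaving only the terms where the deleted $X_n$ sits in the last slot.'' This is false: $(1\otimes\tau)(a\otimes b)=\tau(b)\,a$, and $\tau(b)$ is the \emph{trace} of $b$, not a projection onto constants. For a nonconstant monomial $b=X_{j_{l+1}}\cdots X_{j_k}$ there is no reason for $\tau(b)$ to vanish (e.g.\ for semicircular variables $\tau(X_i^2)=1$; for general $X$ with conjugate variables nothing at all is assumed about such moments). Consequently your per-monomial bound $\|(1\otimes\tau)\partial_n(X_{j_1}\cdots X_{j_k})\|_R\le \chi_{\{j_k=n\}}R^{k-1}$ is wrong. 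A concrete counterexample: for $P=X_1X_2X_1$ one has $(1\otimes\tau)(\partial_1 P)=\tau(X_2X_1)\cdot 1 + X_1X_2$, whose $\|\cdot\|_R$-norm is $|\tau(X_2X_1)|+R^2$, not $\le R^2$.

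The paper's proof keeps \emph{all} terms $l=1,\ldots,k$ and uses instead the bound $|\tau(X_{j_{l+1}}\cdots X_{j_k})|\le S^{k-l}$ with $S=\sup_n\|X_n\|$ (since $\tau$ is a state and $\|\cdot\|\le S$ on the generators). This yields
\[
\sum_{n\in I}\|(1\otimes\tau)\partial_n(X_{j_1}\cdots X_{j_k})\|_R\le \sum_{l=1}^k R^{l-1}S^{k-l}=\frac{R^k}{R}\sum_{l=1}^k\Big(\frac{S}{R}\Big)^{k-l}\le \frac{R^k}{R-S},
\]
and this is exactly where the constant $\frac{1}{R-S}$ (rather than your claimed $\frac{1}{R}$) comes from. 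Your own parenthetical remark that $|\tau(b)|\le\|b\|\le\|b\|_R$ is the correct observation; you just need to use it for every $l$, not only $l=k$.
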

\begin{proof}
Let $S:=\sup_n\|X_n\|$. Given $P\in \ps$ of degree $d$, let $J\subset \nz$ be as in the proof of Lemma \ref{free_difference_quotients_bounded}. Then
	\begin{align*}
		\sum_{n\in I} \|(1\otimes \tau)(\partial_n P)\|_R &\leq \sum_{k=1}^d \sum_{\vec{j}\in J^k}|c_{X_{j_1}\cdots X_{j_k}}(P)| \sum_{n\in I} \|(1\otimes \tau)(\partial_n X_{j_1}\cdots X_{j_k})\|_{R}\\
			& \leq \sum_{k=1}^d \sum_{\vec{j}\in J^k}|c_{X_{j_1}\cdots X_{j_k}}(P)|\sum_{l=1}^k \chi_I(j_l) R^{l-1} |\tau(X_{j_{l+1}}\cdots X_{j_k})|\\
			& \leq \sum_{k=1}^d \sum_{\vec{j}\in J^k} |c_{X_{j_1}\cdots X_{j_k}}(P)| R^k \frac{1}{R}\sum_{l=1}^k \frac{S^{k-l}}{R^{k-l}}\\
			& \leq \|P\|_R \frac{1}{R-S}.
	\end{align*}
The computation for $(\tau\otimes 1)$ is similar.
\end{proof}

\begin{cor}\label{free_difference_quotient_adjoint_bounded}
Let $R>\sup_n \|X_n\|$. Assume for each $n\in \nz$, that the conjugate variable $\xi_n$ is given by an element of $\ps^{(R)}$. Then, thinking of the free difference quotients as densely defined maps
	\[
		\partial_n\colon L^2M\to L^2M\otimes L^2M^{op},
	\]
we have $\ps\hat\otimes_R\ps^{op}\subset \Dom(\partial_n^*)$ and
	\[
		\left\|\partial_n^*\colon \ps\hat\otimes_R\ps^{op}\to \ps^{(R)}\right\| \leq \frac{2}{R-\sup_n\|X_n\|}+\|\xi_n\|_R.
	\]
\end{cor}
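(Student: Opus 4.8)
The plan is to apply the adjoint formula \eqref{adjoint_formula} from Proposition \ref{clos_deriv} to the derivation $\partial_n$ with domain $\ax=\ps$, and then estimate the resulting three terms in $\|\cdot\|_R$. First I would note that the hypotheses of Proposition \ref{clos_deriv} are satisfied: $\partial_n$ is a real derivation valued in $\ps\otimes\ps^{op}$, and since the conjugate variable $\xi_n$ exists we have $1\otimes 1\in\Dom(\partial_n^*)$ with $\partial_n^*(1\otimes 1)=\xi_n$. Hence for $\eta\in\ps\hat\otimes_R\ps^{op}$ (approximating by elements of the dense subspace $\ps\otimes\ps^{op}$), formula \eqref{adjoint_formula} gives
\[
\partial_n^*(\eta)=\eta\#\xi_n - m\circ(1\otimes\tau\otimes 1)\circ(1\otimes\partial_n+\partial_n\otimes 1)(\eta).
\]

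Next I would bound each piece. For the first term, $\|\eta\#\xi_n\|_R\le\|\eta\|_{R\otimes_\pi R}\|\xi_n\|_R$ by the bound $\|\eta\# P\|_R\le\|\eta\|_{R\otimes_\pi R}\|P\|_R$ recorded in Subsection \ref{bimodule_notations} (the ``various norms'' discussion). For the second term, writing $\eta=\sum_i A_i\otimes B_i$ with $\sum_i\|A_i\|_R\|B_i\|_R$ close to $\|\eta\|_{R\otimes_\pi R}$, the operator $m\circ(1\otimes\tau\otimes1)\circ(1\otimes\partial_n)$ sends $A_i\otimes B_i$ to $A_i\cdot(\tau\otimes1)(\partial_n B_i)$, which has $\|\cdot\|_R$-norm at most $\|A_i\|_R\|(\tau\otimes1)(\partial_n B_i)\|_R$; similarly $m\circ(1\otimes\tau\otimes1)\circ(\partial_n\otimes1)$ sends $A_i\otimes B_i$ to $(1\otimes\tau)(\partial_n A_i)\cdot B_i$. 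Summing over $i$ and invoking Lemma \ref{1_tensor_tau} (applied with $I=\{n\}$, so each single-index bound is at most $\frac{1}{R-\sup_n\|X_n\|}$ times $\|A_i\|_R$ resp. $\|B_i\|_R$) gives a total bound of $\frac{2}{R-\sup_n\|X_n\|}\sum_i\|A_i\|_R\|B_i\|_R$; letting the approximating decomposition tend to the infimum yields $\frac{2}{R-\sup_n\|X_n\|}\|\eta\|_{R\otimes_\pi R}$.

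Combining the two estimates and using $\|\eta\|_{R\otimes_\pi R}$ as the relevant norm on the domain gives
\[
\|\partial_n^*(\eta)\|_R\le\Big(\frac{2}{R-\sup_n\|X_n\|}+\|\xi_n\|_R\Big)\|\eta\|_{R\otimes_\pi R},
\]
which is the claimed operator-norm bound; in particular $\partial_n^*$ maps into $\ps^{(R)}$ and $\ps\hat\otimes_R\ps^{op}\subset\Dom(\partial_n^*)$ once one checks the estimate passes to the completion (it does, since all bounds are in terms of $\|\cdot\|_{R\otimes_\pi R}$ and $\ps\otimes\ps^{op}$ is dense). The main technical point to be careful about is the passage from polynomial $\eta$ to general $\eta\in\ps\hat\otimes_R\ps^{op}$: one must verify that $\partial_n^*$, a priori defined as a Hilbert-space adjoint on $L^2M\otimes L^2M^{op}$, indeed agrees on the dense polynomial subspace with the algebraically-defined formula and that the $\|\cdot\|_R$-bound then forces the inclusion in the domain — but this is exactly the content of Proposition \ref{clos_deriv} together with the fact (noted in Remark \ref{polynomials_supported_finitely}) that every polynomial lives in a finitely generated subalgebra, so no genuinely new obstacle arises beyond bookkeeping with the projective norm.
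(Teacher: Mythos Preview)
Your proposal is correct and follows essentially the same route as the paper's proof: both apply the adjoint formula from Proposition~\ref{clos_deriv} on polynomial tensors, bound $\eta\#\xi_n$ directly and the remaining two terms via Lemma~\ref{1_tensor_tau}, optimize over decompositions to reach $\|\eta\|_{R\otimes_\pi R}$, and then pass to the completion using density of $\ps\otimes\ps^{op}$ together with $\|\cdot\|_\tau\le\|\cdot\|_R$ and $\|\cdot\|_{\tau\otimes\tau^{op}}\le\|\cdot\|_{R\otimes_\pi R}$.
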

\begin{proof}
Let $\eta=\sum_i A_i\otimes B_i\in \ps\otimes\ps^{op}$. Then (\ref{adjoint_formula}) implies
	\[
		\partial_n^*(\eta) = \eta\# \xi_n - \sum_i (1\otimes\tau)(\partial_n A_i) B_i - A_i(\tau\otimes 1)(\partial_n B_i),
	\]
and the right-hand side defines an element of $\ps^{(R)}$ by our assumptions on $\xi_n$. Then Lemma \ref{1_tensor_tau} implies
	\begin{align*}
		\|\partial_n^*(\eta)\|_R &\leq \| \eta\|_{R\otimes_\pi R} \|\xi_n\|_R + \frac{2}{R-\sup_{n\in\nz} \|X_n\|}\sum_i \| A_i\|_R\|B_i\|_R .
	\end{align*}
Since $\sum_i A_i\otimes B_i$ can be chosen so that $\sum_i \|A_i\|_R\|B_i\|_R$ is arbitrarily close $\|\eta\|_{R\otimes_\pi R}$, we have the claimed bound on the norm. Then using the density of $\ps\otimes\ps^{op}$ in $\ps\hat\otimes_R\ps^{op}$ and the inequalities $\|\cdot\|_\tau\leq \|\cdot\|_R$, $\|\cdot\|_{\tau\otimes\tau^{op}}\leq \|\cdot\|_{R\otimes_\pi R}$, we see that $\ps\hat\otimes_R\ps^{op}\subset\Dom(\partial_n^*)$ with the claimed bound. 
\end{proof}

The following lemma follows from the same argument as in Lemma \ref{free_difference_quotients_bounded}.

\begin{lemma}\label{cyclic_derivative_bounded}
Let $R>S>\sup_n\|X_n\|$. Then for any subset $I\subset \nz$, $\sum_{n\in I} \ds_n \Sigma$ and $\sum_{n\in I} \ds_n$ extend to bounded maps
	\begin{align*}
		\left\| \sum_{n\in I} \ds_n \Sigma \colon \ps^{(R)}\to \ps^{(R)}\right\| \leq \frac{1}{R}\qquad\text{and}\qquad \left\|\sum_{n\in I} \ds_n \colon \ps^{(R)}\to \ps^{(S)}\right\|\le \mathscr{C}(R,S).
	\end{align*}
Consequently, we have
	\begin{align*}
		\left\|\ds\Sigma\colon \ps^{(R)}\to \ps^{(R)}_1\right\|\leq \frac{1}{R} \qquad\text{and}\qquad \left\|\ds\colon \ps^{(R)}\to \ps^{(S)}_1\right\|\le \mathscr{C}(R,S).
	\end{align*}
\end{lemma}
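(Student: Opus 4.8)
The plan is to mimic the proof of Lemma \ref{free_difference_quotients_bounded} verbatim, replacing the free difference quotient $\partial_n$ by the cyclic derivative $\ds_n$, and then to repackage the two scalar estimates into the stated operator-norm bounds on the $\ell^1$-valued maps $\ds\Sigma$ and $\ds$. The starting observation is that, on a monomial $X_{i_1}\cdots X_{i_d}$, the definition $\ds_n(p)=\sum_{p=AX_nB} BA$ produces a sum of monomials each of degree exactly $d-1$, and the number of nonzero terms (summed over $n$) is at most the number of occurrences of some $X_n$ among the letters $X_{i_1},\dots,X_{i_d}$, which is at most $d$. So for a monomial $p$ of degree $d$ one gets $\sum_{n\in I}\|\ds_n p\|_S \le d\, S^{d-1}$, exactly the same combinatorial bound that appeared for $\partial_n$ (there the bound on $\sum_{n\in I}\|\partial_n p\|_{S\otimes_\pi S}$ was also $d\,S^{d-1}$, via $\|A\otimes B\|_{S\otimes_\pi S}\le \|A\|_S\|B\|_S$).

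First I would fix $P\in\ps$ of degree $d$, introduce the finite index set $J\subset\nz$ as in Lemma \ref{free_difference_quotients_bounded} so that $P=\sum_{k=1}^d\sum_{\vec j\in J^k} c_{\vec j} X_{j_1}\cdots X_{j_k}$, and then estimate, for any $I\subset\nz$,
\[
\sum_{n\in I}\|\ds_n P\|_S \le \sum_{k=1}^d\sum_{\vec j\in J^k}|c_{\vec j}|\sum_{l=1}^k \chi_I(j_l) S^{k-1}
\le \sum_{k=1}^d\sum_{\vec j\in J^k}|c_{\vec j}|\, k\,\frac{S^{k-1}}{R^k}R^k
\le \mathscr{C}(R,S)\|P\|_R,
\]
using $t S^{t-1}/R^t \le \mathscr{C}(R,S)$ at $t=k$ and the definition of $\|P\|_R$. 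The analogous computation with $\ds_n\Sigma$ replaces the factor $k$ coming from $\ns$-eigenvalue by $1$ (since $\Sigma$ divides the degree-$k$ part by $k$), leaving $\sum_{n\in I}\|\ds_n\Sigma P\|_R \le \sum_k\sum_{\vec j\in J^k}|c_{\vec j}|\, S^{k-1}\le \frac1R\sum_k\sum_{\vec j\in J^k}|c_{\vec j}| R^k=\frac1R\|P\|_R$ after writing $S^{k-1}=R^{k-1}(S/R)^{k-1}/R\cdot R \le R^{k-1}$ — in fact one simply bounds $S^{k-1}\le R^{k-1}$ directly, so no $\mathscr{C}$ is needed for the $\Sigma$ version. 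By density of $\ps$ in $\ps^{(R)}$ and the fact that $\|\cdot\|_S\le\|\cdot\|_R$ is not what is used here (rather the bounds are already in terms of $\|P\|_R$ on the domain side), these inequalities extend the maps $\sum_{n\in I}\ds_n\Sigma$ and $\sum_{n\in I}\ds_n$ to bounded maps $\ps^{(R)}\to\ps^{(R)}$ and $\ps^{(R)}\to\ps^{(S)}$ with the stated norms; I should note that continuity in the target $\ps^{(S)}$ requires only $S>\sup_n\|X_n\|$ so that $\ps^{(S)}$ contains $\ps(X)$ and the completion argument goes through.

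Finally, for the ``consequently'' clause, I would take $I=\nz$ and observe that the uniform-in-$I$ bound is precisely an $\ell^1$ bound: for $P\in\ps^{(R)}$,
\[
\|\ds\Sigma P\|_{R,1}=\sum_{n\in\nz}\|\ds_n\Sigma P\|_R \le \frac1R\|P\|_R,
\qquad
\|\ds P\|_{S,1}=\sum_{n\in\nz}\|\ds_n P\|_S \le \mathscr{C}(R,S)\|P\|_R,
\]
which are exactly the asserted operator norm bounds for $\ds\Sigma\colon\ps^{(R)}\to\ps^{(R)}_1$ and $\ds\colon\ps^{(R)}\to\ps^{(S)}_1$. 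I do not anticipate a genuine obstacle here; the only mild subtlety — and the step I would be most careful about — is making sure the extension to all of $\ps^{(R)}$ is legitimate, i.e. that the a priori estimates on polynomials, which are stated with the domain norm $\|\cdot\|_R$ fixed but the target norm being either $\|\cdot\|_R$ or $\|\cdot\|_S$ with $S<R$, genuinely pass to the completion; this is immediate because $\ds$ and $\Sigma$ are defined on the dense subalgebra $\ps$ and the estimates show they are $\|\cdot\|_R$-to-$\|\cdot\|_{\le R}$ bounded there, so they admit unique bounded extensions, and the identity $\ds = m\circ\diamond\circ\partial$ together with Lemma \ref{free_difference_quotients_bounded} could alternatively be invoked to shortcut the combinatorics entirely.
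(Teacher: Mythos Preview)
Your proposal is correct and is exactly the approach the paper takes: the paper's proof consists of a single sentence saying the lemma follows from the same argument as Lemma \ref{free_difference_quotients_bounded}, and you have faithfully carried that out (the momentary appearance of $S^{k-1}$ in the $\ds_n\Sigma$ computation should be $R^{k-1}$ since the target norm there is $\|\cdot\|_R$, but you immediately correct this). There is nothing to add.
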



\subsection{A mean value estimate and inverse function theorem}

Let $R,S>2$, $P\in \ps^{(R)}$, $Y\in\ps^{(S)}_{\infty}$. If $\|Y\|_{S,\infty}\leq R$, then $P(Y)$, by which we mean $P$ with $X_n$ replaced by $Y_n$ for every $n\in\nz$, is a convergent power series with radius of convergence at least $S$. Moreover, $\|P(Y)\|_S\leq \|P\|_R$. The following lemma (which is a folklore result whose proof we provide for the convenience of the reader) provides us with a ``mean-value'' estimate.

\begin{lemma}\label{mean_value_estimate}
Let $R,S>\sup_n\|X_n\|$, $P\in \ps^{(R+\epsilon)}$ for some $\epsilon>0$, and $Y^{(j)}\in \ps^{(S)}_\infty$ satisfying $\|Y^{(j)}\|_{S,\infty}\leq R$, $j=1,2$. Then
	\[
		\| P(Y^{(1)}) - P(Y^{(2)})\|_S\leq \sum_{n\in \nz} \|\bar\partial_n P\|_{R\otimes_\pi R} \| Y^{(1)} - Y^{(2)}\|_S,
	\]
and consequently
	\[
		\| P(Y^{(1)}) - P(Y^{(2)})\|_S \leq \mathscr{C}(R+\epsilon, R)\|P\|_{R+\epsilon}\|Y^{(1)} - Y^{(2)}\|_S.
	\]
\end{lemma}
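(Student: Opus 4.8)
The plan is to prove the first inequality by a telescoping/interpolation argument at the level of monomials, and then obtain the second inequality by combining it with the operator-norm bound on $\sum_n \bar\partial_n$ from Lemma~\ref{free_difference_quotients_bounded}. First I would reduce to the case $P\in\ps$ a monomial, say $P=X_{i_1}\cdots X_{i_d}$, since both sides are controlled by $\|\cdot\|_{R+\eps}$-convergent sums over monomials and the estimates are additive in $P$ (the passage from polynomials to the full Banach algebra $\ps^{(R+\eps)}$ being handled at the end by density, using $\|Y^{(j)}\|_{S,\infty}\le R$ so that evaluation is $\|\cdot\|_{R+\eps}\to\|\cdot\|_S$-contractive). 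For a monomial, I would write the standard hybrid telescoping identity
\begin{align*}
	P(Y^{(1)})-P(Y^{(2)}) = \sum_{l=1}^d Y^{(1)}_{i_1}\cdots Y^{(1)}_{i_{l-1}}\bigl(Y^{(1)}_{i_l}-Y^{(2)}_{i_l}\bigr) Y^{(2)}_{i_{l+1}}\cdots Y^{(2)}_{i_d},
\end{align*}
and recognize the $l$-th term as $\partial_{i_l}P$ evaluated with $X_n\mapsto Y^{(1)}_n$ in the left leg and $X_n\mapsto Y^{(2)}_n$ in the right leg, then acted on $Y^{(1)}_{i_l}-Y^{(2)}_{i_l}$ via $\#$. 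Summing over $l$ gives exactly $\sum_{n}(\partial_n P)$ with the two evaluations, applied to $Y^{(1)}-Y^{(2)}$.

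Next I would estimate in $\|\cdot\|_S$. Writing $\partial_n P=\sum_i A_i^{(n)}\otimes B_i^{(n)}$ with $\sum_{n,i}\|A_i^{(n)}\|_R\|B_i^{(n)}\|_R$ close to $\sum_n\|\partial_n P\|_{R\otimes_\pi R}$, each term of $\sum_n (\partial_n P)\#(Y^{(1)}-Y^{(2)})$ with mixed evaluation is bounded by $\|A_i^{(n)}(Y^{(1)})\|_S\,\|Y^{(1)}_n-Y^{(2)}_n\|_S\,\|B_i^{(n)}(Y^{(2)})\|_S$. Since $\|Y^{(j)}\|_{S,\infty}\le R$, evaluation sends $\|\cdot\|_R$ to $\|\cdot\|_S$ contractively, so this is at most $\|A_i^{(n)}\|_R\|B_i^{(n)}\|_R\,\|Y^{(1)}_n-Y^{(2)}_n\|_S\le \|A_i^{(n)}\|_R\|B_i^{(n)}\|_R\,\|Y^{(1)}-Y^{(2)}\|_{S,\infty}$; here one must be slightly careful about whether the right-hand side should carry $\|Y^{(1)}-Y^{(2)}\|_S$ (a single $\ps^{(S)}$-norm, as in the statement) or $\|Y^{(1)}-Y^{(2)}\|_{S,\infty}$, and I would interpret $\|\cdot\|_S$ on the sequence $Y^{(1)}-Y^{(2)}$ as the supremum norm, matching the sup over $n$ that arises. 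Summing over $i$ and $n$ and letting the decomposition approach the projective norm yields $\|P(Y^{(1)})-P(Y^{(2)})\|_S\le \sum_n\|\partial_n P\|_{R\otimes_\pi R}\,\|Y^{(1)}-Y^{(2)}\|_S$; passing from $\partial_n$ to $\bar\partial_n$ and from $\ps$ to $\ps^{(R+\eps)}$ is routine via Lemma~\ref{free_difference_quotients_bounded} and density. Finally, applying the second bound of Lemma~\ref{free_difference_quotients_bounded} with the pair $(R+\eps, R)$ gives $\sum_n\|\bar\partial_n P\|_{R\otimes_\pi R}\le \mathscr{C}(R+\eps,R)\|P\|_{R+\eps}$, which is the claimed second inequality.

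I expect the main obstacle to be purely bookkeeping rather than conceptual: namely, making the "mixed evaluation" of $\partial_n P$ (left leg at $Y^{(1)}$, right leg at $Y^{(2)}$) precise and checking that it is still controlled by the projective norm $\|\partial_n P\|_{R\otimes_\pi R}$ uniformly over the choice of decomposition — one has to observe that for an elementary tensor $A\otimes B$ the quantity $\|A(Y^{(1)})\|_S\|B(Y^{(2)})\|_S\le\|A\|_R\|B\|_R$ regardless of which of $Y^{(1)},Y^{(2)}$ is used on each leg, so the bound is insensitive to the mixing. A secondary subtlety is ensuring $P(Y^{(j)})$, and more importantly the intermediate hybrid words, are genuinely convergent power series in $\ps^{(S)}$; this is exactly the content of the remark preceding the lemma ($\|Y^{(j)}\|_{S,\infty}\le R$ and $P\in\ps^{(R)}$ imply $P(Y^{(j)})\in\ps^{(S)}$ with $\|P(Y^{(j)})\|_S\le\|P\|_R$), applied also to the monomials $A_i^{(n)}$, $B_i^{(n)}$ and to the hybrid words, so no new analytic input is needed beyond Lemma~\ref{free_difference_quotients_bounded}.
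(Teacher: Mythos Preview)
Your proposal is correct and follows essentially the same route as the paper: the telescoping/hybrid identity on monomials, recognition of the terms as the mixed evaluation of $\partial_n P$ acting via $\#$ on $Y^{(1)}_n - Y^{(2)}_n$, the projective-norm bound combined with contractivity of evaluation (since $\|Y^{(j)}\|_{S,\infty}\le R$), and then Lemma~\ref{free_difference_quotients_bounded} for the second inequality. You also correctly flag that $\|Y^{(1)}-Y^{(2)}\|_S$ on the right-hand side must be read as the sup norm $\|\cdot\|_{S,\infty}$, which is exactly how the paper uses it.
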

\begin{proof}
By Lemma \ref{free_difference_quotients_bounded}, $\sum_{n\in \nz} \bar\partial_n P\in \ps\hat\otimes_R\ps^{op}$. Since $\|Y^{(j)}\|_{S,\infty}\leq R$ for $j=1,2$,
	\[
		\sum_{j\in \nz} [\bar\partial_j P](Y^{(1)}, Y^{(2)})\in \ps\hat\otimes_S\ps^{op},
	\]
where for $A,B\in\ps^{(R)}$ we denote $[A\otimes B](Y^{(1)}, Y^{(2)}) = A(Y^{(1)}) \otimes B(Y^{(2)})$.

Now, suppose $P=\sum_{d\geq 0}\sum_{\vec{i}\in \nz^d} c_{\vec{i}} X_{i_1}\cdots X_{i_d}$.  Then
	\begin{align*}
		\|P(Y^{(1)}) - P(Y^{(2)}) \|_S &= \left\| \sum_{\substack{d\geq 1\\ \vec{i}\in \nz^d}} c_{\vec{i}} \left\{\sum_{k=1}^d  Y^{(1)}_{i_1}\cdots Y^{(1)}_{i_{k-1}}\left[ Y^{(1)}_{i_k} - Y^{(2)}_{i_k}\right] Y^{(2)}_{i_{k+1}} \cdots Y^{(2)}_{i_d}\right\}\right\|_S\\
		&=\left\| \sum_{\substack{d\geq 1\\ \vec{i}\in \nz^d}} c_{\vec{i}} \left\{ \sum_{n\in \nz} [\partial_n X_{i_1}\cdots X_{i_d}](Y^{(1)}, Y^{(2)}) \# \left[Y^{(1)}_n - Y^{(2)}_n\right]\right\} \right\|_S\\
		&=\left\| \sum_{n\in \nz} [\bar\partial_n P](Y^{(1)}, Y^{(2)}) \# \left[Y^{(1)}_n - Y^{(2)}_n\right]\right\|_S\\
		&\leq \sum_{n\in \nz} \| [\bar\partial_n P](Y^{(1)}, Y^{(2)})\|_{S\otimes_\pi S} \left\|Y^{(1)}_n - Y^{(2)}_n\right\|_S\\
		&\leq \sum_{n\in \nz} \|\bar\partial_n P\|_{R\otimes_\pi R} \| Y^{(1)} - Y^{(2)}\|_S.
	\end{align*}
Appealing to Lemma \ref{free_difference_quotients_bounded} then concludes the proof.
\end{proof}

The following inverse function theorem for $\ps^{(R)}_\infty$ is the analogue of \cite[Corollary 2.4]{GS14}, and the proof is similar.

\begin{lemma}\label{inverse_function}
Let $R>S>\sup_n \|X_n\|$. Then there is a constant $0<C<R-S$, depending only on $R$ and $S$, such that if $f\in \ps^{(R)}_\infty$ satisfies $\|f\|_{R,\infty}<C$ then there exists $H\in \ps^{(S+C)}_\infty$ satisfying $H(X+f)=X$.
\end{lemma}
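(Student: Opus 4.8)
The plan is to mimic the standard Banach-space inverse function theorem argument, realizing the inverse $H$ as a fixed point of a contraction built from $f$. Given $f\in\ps^{(R)}_\infty$ with $\|f\|_{R,\infty}$ small, I want $H=(H_n)_{n\in\nz}\in\ps^{(S+C)}_\infty$ with $H_n(X+f)=X_n$ for all $n$. Writing $H_n = X_n + h_n$, the equation $H(X+f)=X$ becomes $h_n(X+f) = -f_n(X+f)$, equivalently $h_n = -f_n\circ(X+f) - \big(h\circ(X+f) - h\big)_n$. So I would set up the map $\Phi$ on (a ball in) $\ps^{(S+C)}_\infty$ by
\[
\Phi(h)_n := -f_n(X+h+f\,) \ \text{wait—}
\]
more precisely: given the candidate $H=X+h$, we need $(X+h)\circ(X+f) = X$, i.e. $X+f + h(X+f) = X$, i.e. $h(X+f) = -f(X+f)$. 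Treating $h\mapsto \Phi(h)$ where $\Phi(h)$ is defined by $\Phi(h)(X+f) = -f(X+f)$ is circular, so instead iterate: $H^{(0)} = X$, and $H^{(k+1)} = X - f(H^{(k)})$ does not obviously land in the right algebra. The cleaner route, following \cite[Corollary 2.4]{GS14}, is to solve for $g := H - X \in \ps^{(S+C)}_\infty$ from the equation $g = -f\circ(X+f) - \big(g\circ(X+f) - g\circ X\big)$, viewing the right-hand side as a map $\Psi$ of $g$; one checks $\Psi$ maps a small ball of $\ps^{(S+C)}_\infty$ into itself and is a contraction there, using the mean value estimate of Lemma \ref{mean_value_estimate} to control $g\circ(X+f)-g\circ X$ in terms of $\|f\|$, which is the crucial smallness input.

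In detail, the steps are: \textbf{(1)} Fix $C>0$ small (to be constrained below) with $C<R-S$, and work in the closed ball $B=\{g\in\ps^{(S+C)}_\infty : \|g\|_{S+C,\infty}\le \rho\}$ for a suitable radius $\rho$; note that for $g\in B$ the composition $g\circ(X+f)$ makes sense and is estimated via $\|(X+f)_n\|_R \le \sup_n\|X_n\| + C < R < S+C$ when we also demand $\|f\|_{R,\infty}<C$, so Lemma \ref{mean_value_estimate} applies with the pair $(S+C, R)$ or $(R+\epsilon,R)$ as appropriate. \textbf{(2)} Define $\Psi(g) := -f\circ(X+f) - \big(g\circ(X+f) - g\big)$, entrywise; here $f\circ(X+f)$ lies in $\ps^{(R)}_\infty$ (hence in $\ps^{(S+C)}_\infty$ since $S+C$ can be taken $\le R$, or directly estimated) with $\|f\circ(X+f)\|_{\cdot,\infty}\le \|f\|_{R,\infty}$. \textbf{(3)} Self-map: bound $\|g\circ(X+f) - g\circ X\|_{S+C,\infty}$ by $\mathscr{C}(R+\epsilon,R)\|g\|_{R+\epsilon,\infty}\|f\|_{S+C,\infty}$-type expressions from Lemma \ref{mean_value_estimate} (applied to each coordinate $g_n$ and then taking sup), which is $\le \tfrac12\rho$ once $C$ is small; combined with $\|f\|<C\le \tfrac12\rho$ this gives $\Psi(B)\subset B$. \textbf{(4)} Contraction: for $g,g'\in B$, $\Psi(g)-\Psi(g') = -(g-g')\circ(X+f) + (g-g')$, and again Lemma \ref{mean_value_estimate} (plus the triangle inequality, or more directly estimating $g\mapsto g\circ(X+f)-g$ coordinatewise) bounds this by $L\|g-g'\|_{S+C,\infty}$ with $L<1$ for $C$ small. \textbf{(5)} Apply the Banach fixed point theorem in the complete metric space $B\subset\ps^{(S+C)}_\infty$ to get $g=H-X$; unravelling, $H(X+f)=X$.

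One technical wrinkle worth flagging: Lemma \ref{mean_value_estimate} as stated requires $P\in\ps^{(R+\epsilon)}$ for a \emph{polynomial}-approximable power series and an $\epsilon$-loss, so in steps (3)–(4) I would either (a) shrink radii slightly, replacing $S+C$ by $S+C+\epsilon$ inside the ball and absorbing $\mathscr{C}(R+\epsilon,R)$-factors, or (b) prove the needed coordinatewise estimate $\|g_n\circ(X+f) - g_n\|_{S+C} \le \big(\sum_m\|\bar\partial_m g_n\|\big)\|f\|$ directly from the telescoping identity used in the proof of Lemma \ref{mean_value_estimate}, avoiding the $\epsilon$. Then summing/supping over $n$ and invoking Lemma \ref{free_difference_quotients_bounded} to control $\sum_{n}\sum_m\|\bar\partial_m g_n\|_{R\otimes_\pi R}$ (uniformly in $n$, it is $\le\frac{1}{R-\sup_n\|X_n\|}$ via the $(1\otimes\tau)$ type bounds, or $\le\mathscr{C}$ via Lemma \ref{free_difference_quotients_bounded}) closes the estimate. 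The main obstacle is precisely this bookkeeping: making sure all compositions stay within radii where the norms are finite and that the constant $C$ can be chosen to simultaneously force the self-map and contraction conditions while keeping $C<R-S$; the infinite-variable aspect is handled automatically since every element of $\ps^{(R)}$ is approximated by finitely-generated polynomials (Remark \ref{polynomials_supported_finitely}) and all the relevant operator bounds in Lemmas \ref{free_difference_quotients_bounded}–\ref{mean_value_estimate} are already uniform over index subsets $I\subset\nz$.
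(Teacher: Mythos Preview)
Your proposal has a genuine structural gap, and it stems from dismissing the right iteration too quickly.

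First, a minor slip: from $(X+h)(X+f)=X$, i.e.\ $(X+f)+h(X+f)=X$, you should get $h(X+f)=-f$, not $h(X+f)=-f(X+f)$. This error propagates into your map $\Psi$; the correct version is $\Psi(g)=-f-(g\circ(X+f)-g)$.

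The more serious problem is the radius bookkeeping in steps (3)--(4). To bound $\|(g-g')\circ(X+f)-(g-g')\|_{S+C}$ via Lemma~\ref{mean_value_estimate}, you must apply the estimate to $P=(g-g')_n$, which requires $(g-g')_n\in\ps^{(R')}$ for some $R'$ strictly larger than $\|X+f\|_{S+C,\infty}$. But $\|X_n\|_{S+C}=S+C$ (not $\sup_n\|X_n\|$, which is the operator norm), so $\|X+f\|_{S+C,\infty}>S+C$, while $g-g'$ is only known to lie in $\ps^{(S+C)}$. Thus each application of $\Psi$ forces a strict drop in radius, and you cannot close the fixed-point argument in any single $\ps^{(S')}_\infty$.

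The paper uses exactly the iteration you wrote down and then abandoned: $H^{(0)}=X$, $H^{(k+1)}=X-f(H^{(k)})$. The point is that here the mean value estimate is applied to $f$, which sits in the \emph{fixed, larger} space $\ps^{(R)}$, while the iterates $H^{(k)}$ only need $\|H^{(k)}\|_{S+C,\infty}\le T$ for an intermediate $T=\tfrac{R+S}{2}$. One chooses $C$ so that $C\cdot\mathscr{C}(R,T)<1$ and $S+C+\tfrac{C}{1-C\mathscr{C}(R,T)}\le T$; an induction then keeps $\|H^{(k)}\|_{S+C,\infty}\le T$ and shows $(H^{(k)})$ is Cauchy in $\ps^{(S+C)}_\infty$. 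Finally, $H(X+f)=X$ follows from $\|H(X+f)-X\|_{S,\infty}=\|f-f(H(X+f))\|_{S,\infty}\le \mathscr{C}(R,T)\|f\|_{R,\infty}\|X-H(X+f)\|_{S,\infty}$ with $\mathscr{C}(R,T)\|f\|_{R,\infty}<1$. The reversal---composing the small fixed $f$ with the varying $H^{(k)}$ rather than the varying $g$ with the fixed $X+f$---is exactly what makes the radii close up.
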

\begin{proof}
Let $T=\frac{R+S}{2}$ and choose $C$ small enough so that
	\[
		C_1:= C\cdot \mathscr{C}(R,T)<1
	\]
and
	\[
		S+C+\frac{C}{1-C_1} \leq T.
	\]
Assume $\|f\|_{R,\infty}<C$.

We recursively define $H^{(k)}=X- f(H^{(k-1)})$ with $H^{(0)}=X$, and note that \emph{a priori} for each $k,n\in\nz$, $H^{(k)}_n$ is a formal power series. However, we claim that for every $k\in \nz$, $\|H^{(k)}\|_{S+C,\infty}\leq T$. Since $S+C<T$, this holds for $H^{(0)}$. Inductively assume that this inequality is satisfied by $H^{(0)},\ldots, H^{(k-1)}$. For each $n\in\nz$ and $0\leq l\leq k-1$, we use Lemma \ref{mean_value_estimate} to see
	\begin{align*}
		\| H^{(l+1)}_n - H^{(l)}_n\|_{S+C} &= \| f_n(H^{(l)}) - f_n(H^{(l-1)})\|_{S+C}\\
			&\leq \mathscr{C}(R, T) \|f\|_{R,\infty} \| H^{(l)} - H^{(l-1)}\|_{S+C}\\
			&< C_1 \| H^{(l)} - H^{(l-1)}\|_{S+C}\\
			&< (C_1)^l \| H^{(1)} - H^{(0)}\|_{S+C} = (C_1)^l \|f\|_{S+C}< C (C_1)^l.
	\end{align*}
Since $n\in \nz$ was arbitrary, we have the same inequality for $\| H^{(l+1)} - H^{(l)}\|_{S+C,\infty}$. Therefore
	\begin{align*}
		\|H^{(k)}\|_{S+C,\infty} &\leq \|H^{(0)}\|_{S+C,\infty}+ \|H^{(k)} - H^{(0)}\|_{S+C,\infty}\\
			&< S+C + C \sum_{l=0}^{k-1} (C_1)^l\\
			&< S+C + \frac{C}{1-C_1}\leq T.
	\end{align*}
So by induction we have $\|H^{(k)}\|_{S+C,\infty}\leq T$ for every $k\in \nz$. In fact, the proof showed that $(H^{(k)})_{k\in \nz}$ is a Cauchy sequence in $\ps^{(S+C)}_\infty$. Let $H\in \ps^{(S+C)}_\infty$ be the limit point, in which case $H=X- f(H)$ and $\|H\|_{S+C,\infty}\leq T$. Hence
	\begin{align*}
		\| H(X+f) - X\|_{S,\infty} &= \|f - f(H(X+f))\|_{S,\infty}\\
						& \leq \mathscr{C}(R,T) \|f\|_{R,\infty} \| X- H(X+f)\|_{S,\infty}.
	\end{align*}
Since $\mathscr{C}(R,T) \|f\|_{R,\infty}<1$, it must be that $\|H(X+f)-X\|_{S,\infty}=0$.
\end{proof}

\begin{rem}\label{inverse_function_constant}
For an explicit formula for $C$ in the previous lemma, we can take
	\[
		C=\frac{\frac{R-S}{4}T e\log\left(\frac{R}{T}\right)}{ T e\log\left(\frac{R}{T}\right) + \frac{R-S}{4}};
	\]
that is, the solution of the equation $\frac{C}{1- C\mathscr{C}(R,T)} = \frac{R-S}{4}$. 
\end{rem}


\subsection{Matrix algebras over $\ps\hat\otimes_R\ps^{op}$}\label{mat_alg}

We consider the Banach spaces
	\begin{align*}
		\mathcal{M}_\infty(\ps\hat\otimes_R\ps^{op})&:=\left\{ H\in (\ps\hat\otimes_R\ps^{op})^{\nz^2}\colon \sup_{i\in \nz}\sum_{j\in \nz} \|H(i,j)\|_{R\otimes_\pi R}<\infty\right\},\ \text{and}\\
		\mathcal{M}_1(\ps\hat\otimes_R\ps^{op})&:=\left\{ H\in (\ps\hat\otimes_R\ps^{op})^{\nz^2}\colon \sum_{i,j\in\nz} \|H(i,j)\|_{R\otimes_\pi R}<\infty\right\},
	\end{align*}
with norms denoted by
	\begin{align*}
		\|H\|_{R,\infty,1} &:= \sup_{i\in \nz}\sum_{j\in \nz} \|H(i,j)\|_{R\otimes_\pi R},\ \text{and}\\
		\|H\|_{R,1,1} &:= \sum_{i,j\in\nz} \|H(i,j)\|_{R\otimes_\pi R},
	\end{align*}
respectively. We observe
	\[
		\mx_1(\ps\hat\otimes_R\ps^{op})\subset \mx_\infty(\ps\hat\otimes_R\ps^{op}).
	\]

On $\mx_\infty(\ps\hat\otimes_R\ps^{op})$, we define a product by
	\[
		[GH](i,j)= \sum_{k\in \nz} G(i,k)\# H(k,j),
	\]
and note that both $\mx_\infty(\ps\hat\otimes_R\ps^{op})$ and $\mx_1(\ps\hat\otimes_R\ps^{op})$ are Banach algebras with this product. Furthermore, $\mx_1(\ps\hat\otimes_R\ps^{op})$ is a right ideal in $\mx_\infty(\ps\hat\otimes_R\ps^{op})$.

We define an involution $\dagger$ on $\mx_\infty(\ps\hat\otimes_R\ps^{op})$ by
	\[
		[H^\dagger](i,j)=H(i,j)^\dagger.
	\]
On $\mx_1(\ps\hat\otimes_R\ps^{op})$ we also have involutions $*$ and $\diamond$ defined by
	\begin{align}
		[H^*](i,j)&=H(j,i)^* \label{hadjoint}\\
		[H^\diamond](i,j)&= H(j,i)^\diamond, \nonumber
	\end{align}
so that, for example, $H^\dagger = (H^*)^\diamond$. These maps can clearly be defined for $\mx_\infty(\ps\hat\otimes_R\ps^{op})$, but the images may no longer lie in $\mx_\infty(\ps\hat\otimes_R\ps^{op})$ since the roles of $i$ and $j$ are switched. However, if $H(i,j)=H(j,i)^*$ (resp. $H(i,j)=H(j,i)^\diamond$) for $H\in \mx_\infty(\ps\hat\otimes_R\ps^{op})$, then $H^*$ (resp. $H^\diamond$) is well-defined in $\mx_\infty(\ps\hat\otimes_R\ps^{op})$.

We also define
	\begin{align*}
		\text{Tr}\colon \mx_1(\ps\hat\otimes_R\ps^{op})\to \ps\hat\otimes_R \ps^{op}
	\end{align*}
by
	\[
		\text{Tr}(H):=\sum_{i\in\nz} H(i,i),
	\]
and note that $\text{Tr}$ is bounded, linear, and satisfies
	\[
		(\tau\otimes \tau^{op})\circ\text{Tr}(GH)=(\tau\otimes\tau^{op})\circ\text{Tr}(HG)
	\]
for all $G,H\in \mx_1(\ps\hat\otimes_R\ps^{op})$.

For $H\in \mx_\infty(\ps\hat\otimes_R\ps^{op})$, $\eta=(\eta_j)_{j\in\nz}\in \ell^\infty(\nz, \ps\hat\otimes_R\ps^{op})$, and $P=(P_j)_{j\in\nz}\in \ps^{(R)}_\infty$ we write
	\begin{align*}
		H\# \eta &:= \left( \sum_{j\in \nz} H(i,j)\# \eta_j\right)_{i\in\nz}\text{, and}\\
		H\# P &:= \left( \sum_{j\in \nz} H(i,j)\# P_j \right)_{i\in \nz}.
	\end{align*}
It is easy to see that this defines bounded actions of $\mx_\infty(\ps\hat\otimes_R\ps^{op})$ on $\ell^\infty(\nz,\ps\hat\otimes_R\ps^{op})$ and $\ps^{(R)}_\infty$, where the action of $H$ is bounded in both cases by $\|H\|_{R,\infty, 1}$. 

We define the same action for $H\in \mx_1(\ps\hat\otimes_R\ps^{op})$, though, as the following propositions indicate, we can reach much stronger conclusions.

\begin{prop}
For $H\in \mx_1(\ps\hat\otimes_R\ps^{op})$ the above action defines bounded maps:
	\begin{align*}
		\left\| H\# \colon \ell^\infty(\nz, \ps\hat\otimes_R\ps^{op})\to \ell^1(\nz, \ps\hat\otimes_R\ps^{op})\right\| &\leq \|H\|_{R,1,1},\qquad\text{and}\\
			\left\| H\#\colon \ps^{(R)}_\infty\to \ps^{(R)}_1\right\|&\leq \|H\|_{R,1,1}.
	\end{align*}
In particular, $H$ acts boundedly on $\ell^1(\nz, \ps\hat\otimes_R\ps^{op})$ and $\ps^{(R)}_1$.
\end{prop}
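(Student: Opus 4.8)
The plan is to reduce both estimates to three ingredients already available in the excerpt: the triangle inequality in the Banach space $\ps\hat\otimes_R\ps^{op}$, the submultiplicativity $\|A\# B\|_{R\otimes_\pi R}\le\|A\|_{R\otimes_\pi R}\|B\|_{R\otimes_\pi R}$ (which holds since $\ps\hat\otimes_R\ps^{op}$ is a Banach algebra under $\#$), the inequality $\|\eta\# P\|_R\le\|\eta\|_{R\otimes_\pi R}\|P\|_R$ recalled above, and a Tonelli-type rearrangement of the resulting nonnegative double series.

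First I would check that $H\#\eta$ and $H\# P$ are genuinely well-defined elements of $\ell^1(\nz,\ps\hat\otimes_R\ps^{op})$ and $\ps^{(R)}_1$ respectively. Fix $i\in\nz$. For $\eta=(\eta_j)_{j}\in\ell^\infty(\nz,\ps\hat\otimes_R\ps^{op})$ submultiplicativity gives $\|H(i,j)\#\eta_j\|_{R\otimes_\pi R}\le\|\eta\|_{R,\infty}\,\|H(i,j)\|_{R\otimes_\pi R}$, and since $\sum_{j\in\nz}\|H(i,j)\|_{R\otimes_\pi R}\le\|H\|_{R,1,1}<\infty$, the series $\sum_{j} H(i,j)\#\eta_j$ converges absolutely in $\ps\hat\otimes_R\ps^{op}$; similarly $\sum_j H(i,j)\# P_j$ converges in $\ps^{(R)}$ for $P=(P_j)_j\in\ps^{(R)}_\infty$, using $\|H(i,j)\# P_j\|_R\le\|P\|_{R,\infty}\|H(i,j)\|_{R\otimes_\pi R}$. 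So $(H\#\eta)_i$ and $(H\# P)_i$ make sense for every $i$.

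Next, for the norm bounds I would estimate
\[
\|H\#\eta\|_{\ell^1(\nz,\ps\hat\otimes_R\ps^{op})}=\sum_{i\in\nz}\Big\|\sum_{j\in\nz}H(i,j)\#\eta_j\Big\|_{R\otimes_\pi R}\le\sum_{i\in\nz}\sum_{j\in\nz}\|H(i,j)\|_{R\otimes_\pi R}\,\|\eta\|_{R,\infty}=\|H\|_{R,1,1}\,\|\eta\|_{R,\infty},
\]
where the inequality is the triangle inequality followed by submultiplicativity, and the last equality rewrites $\sum_i\sum_j$ as $\sum_{i,j}$ by Tonelli (all terms nonnegative). The identical computation, with $\|\eta\# P\|_R\le\|\eta\|_{R\otimes_\pi R}\|P\|_R$ replacing submultiplicativity, yields $\|H\# P\|_{R,1}\le\|H\|_{R,1,1}\|P\|_{R,\infty}$.

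Finally, the ``in particular'' clause follows by precomposing the two bounded maps with the contractive inclusions $\ell^1(\nz,\ps\hat\otimes_R\ps^{op})\hookrightarrow\ell^\infty(\nz,\ps\hat\otimes_R\ps^{op})$ and $\ps^{(R)}_1\hookrightarrow\ps^{(R)}_\infty$ (both have $\|\cdot\|_{R,\infty}\le\|\cdot\|_{R,1}$). I do not expect a real obstacle here; the only points requiring care are the absolute convergence of the inner sums (handled in the first step) and the legitimacy of the rearrangement $\sum_i\sum_j=\sum_{i,j}$, which is immediate from nonnegativity.
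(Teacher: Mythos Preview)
Your proposal is correct and follows essentially the same approach as the paper: both arguments use the triangle inequality together with the submultiplicativity of $\#$ (respectively the bound $\|\eta\# P\|_R\le\|\eta\|_{R\otimes_\pi R}\|P\|_R$) to reduce to the double sum $\sum_{i,j}\|H(i,j)\|_{R\otimes_\pi R}$, and then invoke $\|\cdot\|_{R,\infty}\le\|\cdot\|_{R,1}$ for the ``in particular'' clause. Your version is slightly more explicit about the absolute convergence of the inner sums, but otherwise the proofs are the same.
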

\begin{proof}
Let $P=(P_j)_{j\in \nz}\in \ps_\infty^{(R)}$. Then
	\begin{align*}
		\| H\# P\|_{R,1} &= \sum_{i\in \nz} \left\| \sum_{j\in \nz} H(i,j)\# P_j \right\|_R \leq \sum_{i\in\nz} \sum_{j\in \nz} \|H(i,j)\|_{R\otimes_\pi R} \|P_j\|_R\\
			& \leq \left(\sum_{i,j\in\nz} \|H(i,j)\|_{R\otimes_\pi R}\right) \sup_k \|P_k\|_R = \|H\|_{R,1,1} \|P\|_{R,\infty}.
	\end{align*}
A similar argument yields the bound for $\eta\in \ell^\infty(\nz, \ps\hat\otimes_R\ps^{op})$. In particular, since $\|\cdot \|_{R,\infty}\leq \|\cdot\|_{R,1}$, this gives bounded actions of $H$ on $\ell^1(\nz, \ps\hat\otimes_R\ps^{op})$ and $\ps_1^{(R)}$.
\end{proof}

\begin{prop}\label{prop:M_1_bdd}
With the action defined in the same way as above, $\mx_1(\ps\hat\otimes_R\ps^{op})\subset\bx(\ell^2(\nz, L^2(M\bar{\otimes} M^{op})))$ with $\|\cdot\|\leq \|\cdot\|_{R,1,1}$. 
\end{prop}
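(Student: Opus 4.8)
The plan is to show that $\mx_1(\ps\hat\otimes_R\ps^{op})$ acts boundedly on $\ell^2(\nz, L^2(M\bar\otimes M^{op}))$ with norm at most $\|\cdot\|_{R,1,1}$, by first realizing the action of a single matrix entry $H(i,j)\in\ps\hat\otimes_R\ps^{op}$ as a bounded operator on $L^2(M\bar\otimes M^{op})$ and then assembling the full matrix via a Schur-type estimate. Recall from Subsection \ref{Various norms} that $\ps\hat\otimes_R\ps^{op}\subset M\bar\otimes M^{op}$ with $\|\cdot\|_{M\bar\otimes M^{op}}\leq\|\cdot\|_{R\otimes_\pi R}$; hence each $H(i,j)$, acting by left multiplication (equivalently, by the bimodule action $\#$) on $L^2(M\bar\otimes M^{op})$, is a bounded operator of norm at most $\|H(i,j)\|_{R\otimes_\pi R}$.

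First I would fix $\xi=(\xi_j)_{j\in\nz}\in\ell^2(\nz, L^2(M\bar\otimes M^{op}))$ and write $(H\#\xi)_i=\sum_{j\in\nz} H(i,j)\#\xi_j$, checking that this sum converges in $L^2(M\bar\otimes M^{op})$ since $\sum_j\|H(i,j)\|_{R\otimes_\pi R}\|\xi_j\|_{\tau\otimes\tau^{op}}<\infty$. Then I would estimate
	\[
		\|H\#\xi\|_{\ell^2}^2 = \sum_{i\in\nz}\Bigl\|\sum_{j\in\nz} H(i,j)\#\xi_j\Bigr\|_{\tau\otimes\tau^{op}}^2
			\leq \sum_{i\in\nz}\Bigl(\sum_{j\in\nz} \|H(i,j)\|_{R\otimes_\pi R}\|\xi_j\|_{\tau\otimes\tau^{op}}\Bigr)^2.
	\]
For each fixed $i$, I would apply the Cauchy--Schwarz inequality with the weights $\|H(i,j)\|_{R\otimes_\pi R}$, splitting as $\|H(i,j)\|_{R\otimes_\pi R}^{1/2}\cdot\|H(i,j)\|_{R\otimes_\pi R}^{1/2}\|\xi_j\|_{\tau\otimes\tau^{op}}$, to get
	\[
		\Bigl(\sum_{j} \|H(i,j)\|_{R\otimes_\pi R}\|\xi_j\|_{\tau\otimes\tau^{op}}\Bigr)^2
			\leq \Bigl(\sum_{j}\|H(i,j)\|_{R\otimes_\pi R}\Bigr)\Bigl(\sum_{j}\|H(i,j)\|_{R\otimes_\pi R}\|\xi_j\|_{\tau\otimes\tau^{op}}^2\Bigr).
	\]
The first factor is bounded by $\|H\|_{R,\infty,1}\leq\|H\|_{R,1,1}$. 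Summing over $i$ and interchanging the order of summation, the second factor contributes $\sum_j\bigl(\sum_i\|H(i,j)\|_{R\otimes_\pi R}\bigr)\|\xi_j\|_{\tau\otimes\tau^{op}}^2\leq\|H\|_{R,1,1}\|\xi\|_{\ell^2}^2$, where I use that $\sum_i\|H(i,j)\|_{R\otimes_\pi R}\leq\|H\|_{R,1,1}$ for every $j$. Combining, $\|H\#\xi\|_{\ell^2}^2\leq\|H\|_{R,1,1}^2\|\xi\|_{\ell^2}^2$, which is the claim.

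The main subtlety — really the only one — is making sure the single-entry action on $L^2(M\bar\otimes M^{op})$ is the right one and is genuinely bounded: the element $H(i,j)\in\ps^{(R)}\otimes(\ps^{(R)})^{op}$ sits inside $M\bar\otimes M^{op}$ via the inclusion already established, and multiplication by a bounded operator is an $L^2$-bounded map with the same norm bound, so the operator-norm bound $\|H(i,j)\cdot\|\leq\|H(i,j)\|_{M\bar\otimes M^{op}}\leq\|H(i,j)\|_{R\otimes_\pi R}$ is automatic. Everything else is the standard Schur test argument above, which only uses that $H\in\mx_1$ controls both the row sums and the column sums uniformly. One should also note density of $\ps\otimes\ps^{op}$ in $\ps\hat\otimes_R\ps^{op}$ if one prefers to first verify the estimate for finitely supported $H$ and $\xi$ and then pass to the limit, but this is not strictly necessary given the absolute convergence established at the outset.
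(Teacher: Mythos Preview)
Your proof is correct, and it uses a slightly different route from the paper's. The paper's argument is cruder: it immediately passes from the $\ell^2$-norm to the $\ell^1$-norm via $\|(a_i)\|_{\ell^2}\le\sum_i|a_i|$, then applies the triangle inequality, the operator bound $\|H(i,j)\#\eta_j\|_{L^2}\le\|H(i,j)\|_{R\otimes_\pi R}\|\eta_j\|_{L^2}$, and finally Cauchy--Schwarz in $j$ together with $(\sum_j\|H(i,j)\|^2)^{1/2}\le\sum_j\|H(i,j)\|$. Your approach instead keeps the $\ell^2$-norm and applies the Schur test (splitting the weight as $\|H(i,j)\|^{1/2}\cdot\|H(i,j)\|^{1/2}$ in Cauchy--Schwarz), which is the textbook way to bound matrix operators on $\ell^2$ and in principle yields the sharper Schur bound $\bigl(\|H\|_{R,\infty,1}\cdot\sup_j\sum_i\|H(i,j)\|_{R\otimes_\pi R}\bigr)^{1/2}$ before you majorize both factors by $\|H\|_{R,1,1}$. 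For the purposes of the proposition either method suffices, and both end at the same inequality $\|\cdot\|\le\|\cdot\|_{R,1,1}$.
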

\begin{proof}
Since the $\ell^2$-norm is smaller than the $\ell^1$-norm, we have for $H\in \mx_1(\ps\hat\otimes_R\ps^{op})$ and $\eta=(\eta_j)\in \ell^2(\nz, L^2(M\bar\otimes M^{op}))$,
	\begin{align*}
		\left\|\left(\sum_j H(i,j) \# \eta_j\right)_i  \right\|_{\ell^2(\nz,  L^2(M\bar\otimes M^{op})}& \le \sum_{i} \left\|\sum_j H(i,j)\# \eta_j\right\|_{L^2(M\bar{\otimes} M^{op})}\\
			&\le \sum_{i, j}  \|H(i,j)\# \eta_j \|_{L^2(M\bar{\otimes} M^{op})}\\
			&\le \sum_{i,j} \| H(i,j)\|_{M\bar{\otimes} M^{op}} \|\eta_j\|_{L^2(M\bar{\otimes} M^{op})}\\
			&\le  \sum_{i,j} \| H(i,j)\|_{R\otimes_\pi R} \|\eta_j\|_{L^2(M\bar{\otimes} M^{op})}\\
			&\le \sum_i \left( \sum_j \|H(i,j )\|_{R\otimes_\pi R}^2\right)^{1/2} \left(\sum_j \|\eta_j\|^2_{L^2(M\bar{\otimes} M^{op})}\right)^{1/2}\\
			&\le \|H\|_{R,1,1} \|\eta\|_{\ell^2(\nz, L^2(M\bar{\otimes} M^{op}))}.
\end{align*}
Thus this action of $H$ defines a bounded operator on $\ell^2(\nz,L^2(M\bar\otimes M^{op}))$ with norm at most $\|H\|_{R,1,1}$.
\end{proof}

\begin{rem}\label{rem:M_1_inv_to_B_adj}
It is easily seen that for $H\in\mx_1(\ps\hat\otimes_R\ps^{op})$, $H^*$ is its adjoint in $\bx(\ell^2(\nz, L^2(M\bar{\otimes} M^{op})))$.
\end{rem}

Let $R>S>\sup_n \|X_n\|$. Using Lemma \ref{free_difference_quotients_bounded}, for $P\in \ps^{(R)}_\infty$ we define $\js P\colon \nz^2\to \ps\hat\otimes_S \ps^{op}$ by $\js P(i,j)=\bar\partial_j P_i$. The lemma (by considering $I=\nz$) implies
	\begin{align*}
		\left\|\js\colon \ps^{(R)}_\infty \to \mx_\infty(\ps\hat\otimes_S\ps^{op})\right\|&\leq \mathscr{C}(R,S)\qquad\text{and},\\
		\left\|\js\colon \ps^{(R)}_1 \to \mx_1(\ps\hat\otimes_S\ps^{op})\right\|&\leq \mathscr{C}(R,S).
	\end{align*}
Moreover, we have:

\begin{cor}\label{JD_bounded}
Let $R>S>\sup_n \|X_n\|$. Then $\js\ds$ extends to a bounded map
	\[
		\js\ds\colon \ps^{(R)}\to \mx_1(\ps\hat\otimes_S \ps^{op}),
	\]
with norm bounded by a constant $C$ depending only $R$ and $S$.
\end{cor}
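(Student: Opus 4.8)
The plan is to factor $\js\ds$ through the composition $\ds\colon \ps^{(R)}\to \ps^{(R')}_1$ followed by $\js\colon \ps^{(R')}_1\to \mx_1(\ps\hat\otimes_S\ps^{op})$ for a suitably chosen intermediate radius $R'$ with $R>R'>S$. Concretely, pick $R'$ with $R>R'>S>\sup_n\|X_n\|$. By Lemma \ref{cyclic_derivative_bounded} applied with the pair $(R,R')$ in place of $(R,S)$, the cyclic gradient extends to a bounded map $\ds\colon \ps^{(R)}\to \ps^{(R')}_1$ with norm at most $\mathscr{C}(R,R')$. Next, by the estimate recorded just before the statement (the second displayed inequality there, which is an application of Lemma \ref{free_difference_quotients_bounded} with the pair $(R',S)$), the Jacobian extends to a bounded map $\js\colon \ps^{(R')}_1\to \mx_1(\ps\hat\otimes_S\ps^{op})$ with norm at most $\mathscr{C}(R',S)$. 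Composing, $\js\ds\colon \ps^{(R)}\to \mx_1(\ps\hat\otimes_S\ps^{op})$ is bounded with norm at most $\mathscr{C}(R,R')\,\mathscr{C}(R',S)$, a constant depending only on $R$, $S$, and the auxiliary choice of $R'$ (which can itself be fixed as, say, $R'=\tfrac{R+S}{2}$, making the bound depend only on $R$ and $S$).

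The only point requiring a moment's care is the compatibility of the two maps on the dense subalgebra: for an honest polynomial $P\in\ps$, the cyclic gradient $\ds P=(\ds_n P)_{n\in\nz}$ has only finitely many nonzero entries, each again a polynomial, so $\js(\ds P)(i,j)=\bar\partial_j\ds_i P=\partial_j\ds_i P$ is a finite matrix of polynomials and all expressions are literally defined; the bounded extensions then agree with this on $\ps$ by density and continuity, which is exactly how we are entitled to speak of "$\js\ds$" as a single bounded map on $\ps^{(R)}$. I do not anticipate a genuine obstacle here — the statement is essentially a bookkeeping corollary assembling Lemma \ref{cyclic_derivative_bounded} and Lemma \ref{free_difference_quotients_bounded} — but the one thing to double-check is that the target of $\ds$ (namely $\ps^{(R')}_1$, the $\ell^1$-valued sequences) is precisely the domain on which the $\mx_1$-valued, rather than merely $\mx_\infty$-valued, bound for $\js$ holds; that is why the intermediate space must be the $\ell^1$ space $\ps^{(R')}_1$ and not $\ps^{(R')}_\infty$, and why Lemma \ref{cyclic_derivative_bounded} is invoked in its "$\ds\colon\ps^{(R)}\to\ps^{(R)}_1$" form rather than its pointwise-in-$n$ form.
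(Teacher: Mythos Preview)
Your proposal is correct and follows essentially the same approach as the paper: factor $\js\ds$ through an intermediate radius $R'\in(S,R)$ and compose the bound $\mathscr{C}(R,R')$ from Lemma~\ref{cyclic_derivative_bounded} with the bound $\mathscr{C}(R',S)$ for $\js$ on $\ps^{(R')}_1$. The paper's only cosmetic difference is that it writes the constant as $C=\inf_{T\in(S,R)}\mathscr{C}(T,S)\mathscr{C}(R,T)$ rather than fixing a particular intermediate radius.
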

\begin{proof}
For $T\in (S,R)$, we have
	\begin{align*}
		\|\js\ds P\|_{S,1,1} \leq \mathscr{C}(T,S) \|\ds P\|_{T,1}\leq \mathscr{C}(T,S) \mathscr{C}(R,T) \|P\|_R.
	\end{align*}
So take $C=\inf_{T\in (S,R)} \mathscr{C}(T,S)\mathscr{C}(R,T)$.
\end{proof}

We now seek to establish a formula which will serve as the ``dual'' of $\js$ in a certain sense (see Remark \ref{dual_sense}).

\begin{lemma}\label{Jstar_bounded}
Let $R>\sup_n\|X_n\|$. Assume that for each $n\in\nz$ the conjugate variable $\xi_n$ is given by an element of $\ps^{(R)}$, and moreover that $\sup_n \|\xi_n\|_R<\infty$. Then for $p\in\{1,\infty\}$ and $H\in \mx_p(\ps\hat\otimes_R\ps^{op})$, the expression
	\[
		\left( \sum_{j\in\nz} \partial_j^*(H(i,j))\right)_{i\in\nz}
	\]
defines an element of $\ps^{(R)}_p$ with norm at most
	\[
		\left(\frac{2}{R-\sup_n \|X_n\|} + \sup_{n\in\nz} \|\xi_n\|_R\right) \|H\|_{R,p,1}.
	\]
\end{lemma}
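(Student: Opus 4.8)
The plan is to combine the pointwise adjoint bound from Corollary~\ref{free_difference_quotient_adjoint_bounded} with the summability built into the norms on $\mx_1$ and $\mx_\infty$. First I would fix $H\in \mx_p(\ps\hat\otimes_R\ps^{op})$ and, for each $i\in\nz$, consider the $i$-th component
\[
    G_i := \sum_{j\in\nz} \partial_j^*(H(i,j)).
\]
By the hypotheses on the conjugate variables, Corollary~\ref{free_difference_quotient_adjoint_bounded} applies to every $\partial_j^*$ with the \emph{same} bound $\frac{2}{R-\sup_n\|X_n\|}+\|\xi_j\|_R \le \frac{2}{R-\sup_n\|X_n\|}+\sup_n\|\xi_n\|_R =: K$ (here the uniform bound $\sup_n\|\xi_n\|_R<\infty$ is exactly what makes $K$ finite and independent of $j$). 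Hence
\[
    \|G_i\|_R \le \sum_{j\in\nz} \|\partial_j^*(H(i,j))\|_R \le K \sum_{j\in\nz} \|H(i,j)\|_{R\otimes_\pi R},
\]
which shows first that each $G_i$ is a genuine element of $\ps^{(R)}$ (the series converges absolutely in the Banach algebra $\ps^{(R)}$, since $\sum_j\|H(i,j)\|_{R\otimes_\pi R}<\infty$ in both cases $p=1,\infty$), and second that $\|G_i\|_R \le K\,\sum_j\|H(i,j)\|_{R\otimes_\pi R}$.

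Next I would pass to the $\ell^p$-norm over $i$. For $p=\infty$: taking the supremum over $i$ of the last inequality gives $\sup_i\|G_i\|_R \le K\,\sup_i\sum_j\|H(i,j)\|_{R\otimes_\pi R} = K\,\|H\|_{R,\infty,1}$, so $(G_i)_i\in\ps^{(R)}_\infty$ with the claimed norm bound. For $p=1$: summing the inequality over $i$ gives $\sum_i\|G_i\|_R \le K\,\sum_{i,j}\|H(i,j)\|_{R\otimes_\pi R} = K\,\|H\|_{R,1,1}$, so $(G_i)_i\in\ps^{(R)}_1$ with the claimed bound. In both cases the constant is $K = \frac{2}{R-\sup_n\|X_n\|}+\sup_n\|\xi_n\|_R$, as required.

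There is no serious obstacle here; the lemma is essentially bookkeeping on top of Corollary~\ref{free_difference_quotient_adjoint_bounded}. The one point that deserves a sentence of care is the interchange/convergence issue: one should note that for fixed $i$ the partial sums $\sum_{j\le N}\partial_j^*(H(i,j))$ form a Cauchy sequence in $\ps^{(R)}$ by the absolute summability just displayed, so the series defining $G_i$ genuinely converges in the $\|\cdot\|_R$-norm (and in particular the application of $\partial_j^*$ term-by-term is legitimate, since $\partial_j^*$ is only being applied to the individual entries $H(i,j)\in\ps\hat\otimes_R\ps^{op}$, each of which lies in $\Dom(\partial_j^*)$ by Corollary~\ref{free_difference_quotient_adjoint_bounded}). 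Everything else is a direct estimate.
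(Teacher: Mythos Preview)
Your proposal is correct and follows essentially the same approach as the paper's own proof: apply Corollary~\ref{free_difference_quotient_adjoint_bounded} entrywise, replace $\|\xi_j\|_R$ by the uniform bound $\sup_n\|\xi_n\|_R$, sum over $j$, and then take the supremum or sum over $i$ according to $p$. Your added remark about absolute convergence of the series defining $G_i$ is a harmless bit of extra care.
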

\begin{proof}
Fix $p\in\{1,\infty\}$ and $H\in \mx_p(\ps\hat\otimes_R\ps^{op})$. Corollary \ref{free_difference_quotient_adjoint_bounded} implies for each $i,j\in \nz$
	\[
		\|\partial_j^*(H(i,j))\|_R \leq \left(\frac{2}{R-\sup_n\|X_n\|} + \|\xi_j\|_R\right) \|H(i,j)\|_{R\otimes_\pi R}.
	\]
Then for each $i\in \nz$ we have
	\[
		\sum_{j\in\nz} \| \partial_j^*(H(i,j))\|_R\leq \left(\frac{2}{R-\sup_n \|X_n\|} + \sup_{n\in\nz} \|\xi_n\|_R\right) \sum_{j\in \nz} \|H(i,j)\|_{R\otimes_\pi R},
	\]
which yields the claimed bound when taking the supremum over $i\in\nz$ for $p=\infty$ or summing over $i\in\nz$ for $p=1$.
\end{proof}

When the conjugate variables satisfy the hypothesis of the previous lemma, we define for $H\in \mx_\infty(\ps\hat\otimes_R\ps^{op})$
	\begin{equation}\label{eqn:J*_formula}
		\js^*(H):=\left( \sum_{j\in\nz} \partial_j^*(H(i,j))\right)_{i\in\nz},
	\end{equation}
and so
	\begin{align*}
		\left\|\js^*\colon \mx_\infty(\ps\hat\otimes_R\ps^{op})\to \ps^{(R)}_\infty\right\|&\leq \frac{2}{R-\sup_n\|X_n\|} + \sup_n \|\xi_n\|_R\qquad \text{and}\\
		\left\|\js^*\colon \mx_1(\ps\hat\otimes_R\ps^{op})\to \ps^{(R)}_1\right\| &\leq \frac{2}{R-\sup_n\|X_n\|} + \sup_n \|\xi_n\|_R.
	\end{align*}

\begin{rem}\label{dual_sense}
It is possible to realize $\js^*$ as the dual map to $\js$ in the following sense. Fix $R>S>\sup_n \|X_n\|$, and let
	\[
		\js^\star\colon \mx_1(\ps\hat\otimes_S\ps^{op})^* \to \left( \ps^{(R)}_1\right)^*
	\]
be the dual map to $\js\colon \ps^{(R)}_1 \to \mx_1(\ps\hat\otimes_S\ps^{op})$. Let
	\[
		\iota_1\colon \mx_\infty(\ps\hat\otimes_S\ps^{op}) \to \mx_1(\ps\hat\otimes_S\ps^{op})^*
	\]
be the embedding defined for $H\in \mx_\infty(\ps\hat\otimes_S\ps^{op})$ by
	\[
		[\iota_1(H)](G) = \sum_{i,j\in\nz} \tau\otimes\tau^{op}( H(i,j)^*\# G(i,j)),\qquad G\in \mx_1(\ps\hat\otimes_S\ps^{op}).
	\]
Let
	\[
		\iota_2\colon \ps^{(R)}_\infty \to \left(\ps^{(R)}_1\right)^*
	\]	
be the embedding defined for $P=(P_n)_{n\in\nz}\in \ps^{(R)}_\infty$ by
	\[
		[\iota_2(P)]( F) = \sum_{n\in\nz} \tau( P_n^*F_n),\qquad F=(F_n)_{n\in\nz}\in \ps^{(R)}_1.
	\]
Then
	\[
		\js^\star\circ \iota_1 = \iota_2\circ \js^*.	
	\]
However, for our purposes it suffices to merely consider $\js^*$ as convenient notation.
\end{rem}


\subsection{Free Gibbs states}

Denote the joint law of the $X_n$, $n\in\nz$, with respect to $\tau$ by $\tau_X$.

\begin{defn}
Given $W\in \ps^{(R)}$ for $R>\sup_n \|X_n\|$, we say that $\tau_X$ is a \emph{free Gibbs state with quadratic potential perturbed by $W$} (or a \emph{free Gibbs state with perturbation $W$}) if
	\begin{equation}\label{Schwinger-Dyson_equation}
		\tau([X_n + \ds_n W] P) = \tau\otimes \tau^{op}(\partial_n P)\qquad \forall P\in \ps,\ \forall n\in\nz.
	\end{equation}
\end{defn}

\begin{rem}
Note that (\ref{Schwinger-Dyson_equation}) implies that $X$ has conjugate variables and hence $\ps^{(R)}$ and its norm $\|\cdot\|_R$ are well-defined by Corollary \ref{R_embedding}.
\end{rem}

\begin{rem}\label{V_0_def}
The terminology \emph{quadratic potential} comes from the finite sequence case, $X=(X_1,\ldots, X_N)$, considered in \cite{GS14}. The quadratic potential is
	\[
		V_0=\frac{1}{2}\sum_{i=1}^N X_i^2 \in \cz\lge X_1,\ldots, X_N\rge,
	\]
and if $V=V_0+W$, then for each $n=1,\ldots, N$
	\[
		\ds_n V = X_n + \ds_n W.
	\]
Hence equation (\ref{Schwinger-Dyson_equation}) is equivalent to
	\[
		\tau([\ds_n V]P)=\tau\otimes \tau^{op}(\partial_n P),\qquad \forall P\in \ps,
	\]
for each $n=1,\ldots, N$. In this case, $\tau$ is said to be a \emph{free Gibbs state with potential $V$}. Since $V_0\not\in\ps^{(R)}$ in the infinite sequence case, we have modified the definition to refer only to the perturbation $W\in \ps^{(R)}$.
\end{rem}

We have the following corollary to \cite[Theorem 2.1]{GMS06}.

\begin{cor}\label{Gibbs_state_unique}
Let $S=\sup_n \|X_n\|$. Fix $R> 2+S$ and let $W\in \mathscr{P}^{(R)}$. If
	\[
		\|W\|_R<e(S+1)(S+2)\log\left(\frac{R}{S+2}\right)
	\]
then there is at most one free Gibbs state with perturbation $W$ amongst normal states.
\end{cor}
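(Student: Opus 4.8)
The plan is to reduce the uniqueness statement for infinitely many variables to the finite-variable uniqueness result \cite[Theorem 2.1]{GMS06}, using the key observation from Remark \ref{polynomials_supported_finitely} that every element of $\ps^{(R)}$ is approximated in $\|\cdot\|_R$ by polynomials, each of which involves only finitely many of the indeterminates. First I would recall that \cite[Theorem 2.1]{GMS06}, in the finite-variable setting $X=(X_1,\ldots,X_N)$ with $\|X_i\|\le S$ for all $i$, asserts that the Schwinger--Dyson equation with potential $V_0+W_N$ (where $V_0=\tfrac12\sum_{i=1}^N X_i^2$) has at most one solution among normal states provided $\|W_N\|_R<e(S+1)(S+2)\log(R/(S+2))$; this is where the slightly awkward bound in the statement comes from, and the point is that the constant there does \emph{not} depend on $N$.

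The main step is then the following: suppose $\tau$ and $\tau'$ are two normal states on $\ps$ (i.e., joint laws of two sequences $X$, $X'$ of self-adjoint operators with $\sup_n\|X_n\|, \sup_n\|X'_n\|\le S$) both satisfying (\ref{Schwinger-Dyson_equation}) with the same perturbation $W\in\ps^{(R)}$. I want to show $\tau=\tau'$ as linear functionals on $\ps$, equivalently that they agree on every monomial $X_{i_1}\cdots X_{i_d}$. Fix such a monomial and let $F\subset\nz$ be the (finite) set of indices appearing in it, say $F=\{1,\ldots,N\}$ after relabeling. The subtlety is that $W$ need not be supported on $F$, so I would argue as follows: for each $n\in F$ and each $P\in\ps$ supported on the variables indexed by $F$, the free difference quotient $\partial_n P$ is again supported on $F$, and $\ds_n W$ restricted to acting paired against such $P$ only sees the part of $W$ involving the variables in $F$ together with $X_n$. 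Concretely, I would define $W_F\in\cz\lge X_n\colon n\in F\rge^{(R)}$ to be the image of $W$ under the conditional-expectation-type projection that kills every monomial containing a variable with index outside $F$ (this is $\|\cdot\|_R$-contractive, so $W_F\in\ps^{(R)}$ with $\|W_F\|_R\le\|W\|_R$), and check that for $P$ supported on $F$ and $n\in F$ one has $\tau([X_n+\ds_n W]P)=\tau([X_n+\ds_n W_F]P)$, since the difference $\ds_n(W-W_F)$ consists of monomials each containing a variable indexed outside $F$, which die under the trace against $P$ after applying $\partial_n$ — here one uses that the variables are algebraically free (guaranteed, as noted after the definition of conjugate variables, once conjugate variables exist) so that $\tau\otimes\tau^{op}(\partial_n((W-W_F))P')=0$ for $P'$ supported on $F$. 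Thus the restrictions $\tau|_F$ and $\tau'|_F$ both satisfy the finite $N$-variable Schwinger--Dyson equation with perturbation $W_F$, and since $\|W_F\|_R\le\|W\|_R<e(S+1)(S+2)\log(R/(S+2))$, \cite[Theorem 2.1]{GMS06} forces $\tau|_F=\tau'|_F$; in particular they agree on our fixed monomial. Letting $F$ vary over all finite subsets gives $\tau=\tau'$ on all of $\ps$, and since $\ps$ is weak-$*$ dense this forces equality of the normal states.

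The step I expect to be the main obstacle is the bookkeeping in the reduction to finitely many variables: one must verify carefully that restricting the Schwinger--Dyson equation (\ref{Schwinger-Dyson_equation}) to test polynomials $P$ supported on a finite index set $F$ genuinely produces the finite-variable equation for $W_F$, i.e. that no ``cross terms'' from monomials of $W$ involving outside variables contribute. This uses two facts: that $\partial_n$ and $\ds_n$ preserve the property of being supported on $F\cup\{n\}$ in the appropriate sense, and that pairing with the trace annihilates any residual monomial containing a free variable not present in $P$. One also has to be slightly careful that $\cz\lge X_n\colon n\in F\rge^{(R)}$ embeds compatibly into $\ps^{(R)}$, which is immediate from Corollary \ref{R_embedding} and the definition of $\|\cdot\|_R$ as a sum over monomials. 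Once this is set up, invoking \cite[Theorem 2.1]{GMS06} with its $N$-independent constant and letting $F\uparrow\nz$ is routine; the reason the hypothesis requires $R>2+S$ rather than merely $R>S$ is precisely that \cite{GMS06} needs this gap, and it is inherited unchanged.
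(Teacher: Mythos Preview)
Your reduction has a genuine gap at the step where you claim that, for $n\in F$ and $P$ supported on $F$,
\[
\tau\bigl([X_n+\ds_n W]P\bigr)=\tau\bigl([X_n+\ds_n W_F]P\bigr).
\]
This amounts to asserting $\tau(\ds_n(W-W_F)\cdot P)=0$, and your justification (``monomials containing a variable indexed outside $F$ die under the trace against $P$ after applying $\partial_n$'') does not hold. A monomial of $\ds_n(W-W_F)$ may contain an outside variable $X_m$ to an \emph{even} power, and then there is no reason for the trace to kill it. For a concrete illustration, take $F=\{1\}$, $m\notin F$, and suppose $W$ contains the term $X_1^2 X_m^2$; then $\ds_1(X_1^2X_m^2)=X_1X_m^2+X_m^2X_1$, and pairing with $P=X_1$ gives $\tau(X_1X_m^2X_1)+\tau(X_m^2X_1^2)$, which is generically nonzero. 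Algebraic freeness of the generators says nothing about vanishing of such mixed moments; it only guarantees there are no polynomial relations. In fact the restricted law $\tau|_F$ need \emph{not} satisfy any Schwinger--Dyson equation with perturbation $W_F$: the moments of the variables in $F$ are genuinely influenced by their coupling through $W$ to variables outside $F$, and projecting $W\mapsto W_F$ throws that information away. (One could try to ``integrate out'' the outside variables to obtain an effective perturbation on $F$, but this would not be $W_F$ and you would lose control of its $\|\cdot\|_R$-norm.)

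The paper avoids this entirely by running the argument of \cite[Theorem 2.1]{GMS06} directly in the infinite-variable setting: one sets $\Delta_d=\sup_{\vec{j}\in\nz^d}|(\varphi-\varphi')(X_{j_1}\cdots X_{j_d})|$ (the supremum over \emph{all} of $\nz^d$), derives the same recursion $\Delta_{d+1}\le 2\sum_{k=0}^{d-1}\Delta_k S^{d-1-k}+\sum_n\sum_p|c_n(p)|\Delta_{\deg p+d}$ from the Schwinger--Dyson equation, packages this into the generating series $D_\gamma=\sum_{d\ge1}\gamma^d\Delta_d$ with $\gamma^{-1}=S+2$, and uses Lemma \ref{cyclic_derivative_bounded} to bound $\sum_n\|\ds_n W\|_{\gamma^{-1}}\le\mathscr{C}(R,S+2)\|W\|_R$. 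The point is that the \cite{GMS06} recursion already treats all variables on an equal footing, so no reduction to finite $F$ is needed; the only change in passing to infinitely many variables is that the sum over coordinates $n$ is infinite, and this is exactly what Lemma \ref{cyclic_derivative_bounded} controls.
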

\begin{proof}
Suppose two normal states $\varphi$ and $\varphi'$ on $M$ both satisfy (\ref{Schwinger-Dyson_equation}) for every $n\in \nz$. For each $d\geq 0$, define
	\begin{equation*}
		\Delta_d:=\sup_{\vec{j}\in \nz^d} |(\varphi-\varphi')(X_{j_1}\cdots X_{j_d})|.
	\end{equation*}
Note that $\Delta_d\leq 2 S^d<\infty$ for every $d\geq 0$. By normality, it suffices to show $\Delta_d(\varphi,\varphi')=0$ for all $d\geq 0$. We have immediately that $\Delta_0=0$, since both $\varphi$ and $\varphi'$ are assumed to be states.

For any $P\in \ps$ we have 
	\begin{align*}
		(\varphi-\varphi')(X_n P)= ((\varphi-\varphi')\otimes\varphi)(\partial_n P) + (\varphi'\otimes (\varphi-\varphi'))(\partial_n P) - (\varphi-\varphi')(\ds_n( W) P).
	\end{align*}
So writing $\ds_n W=\sum_{p} c_n(p) p$ as a sum of monomials $p\in \ps$, we have for $j_0=n$
	\begin{align*}
		|(\varphi-\varphi')(X_{j_0}\cdots X_{j_{d}})| &\leq 2 \sum_{k=0}^{d-1} \Delta_k S^{d-1-k} + \sum_{p}|c_{n}(p)| \Delta_{\deg(p)+d}\\
			&\leq 2 \sum_{k=0}^{d-1} \Delta_k S^{d-1-k} + \sum_{n\in \nz}\sum_{p}|c_{n}(p)| \Delta_{\deg(p)+d}.
	\end{align*}
Since the the right-hand side depends only on $d$, we see that
	\begin{align*}
		\Delta_{d+1} \leq 2 \sum_{k=0}^{d-1} \Delta_k S^{d-1-k} + \sum_{n\in \nz}\sum_{p}|c_{n}(p)| \Delta_{\deg(p)+d}.
	\end{align*}
For $\gamma>0$, set
	\begin{equation*}
		D_\gamma = \sum_{d=1}^\infty \gamma^d \Delta_d,
	\end{equation*}
which converges so long as $\gamma<\frac{1}{S}$. In the above inequality we multiply both sides by $\gamma^{d+1}$ and sum over $d\geq 0$ to obtain
	\begin{align*}
		D_\gamma&\leq \sum_{d\geq 0} 2\gamma^2\sum_{k=0}^{d-1} \gamma^k \Delta_k (\gamma S)^{d-1-k} + \sum_{d\geq 0}\gamma \sum_{n\in\nz}\sum_{p} |c_n(p)|\gamma^{-\deg(p)} \gamma^{\deg(p)+d}\Delta_{\deg(p)+d} \\
				&= 2\gamma^2\sum_{k=0}^\infty \gamma^k\Delta_k\sum_{d=k+1}^\infty (\gamma S)^{d-(k+1)} + \gamma \sum_{n\in\nz}\sum_{p} |c_n(p)|\gamma^{-\deg(p)}\sum_{d=0}^\infty \gamma^{\deg(p)+d}\Delta_{\deg(p)+d}\\
				&\leq \frac{2\gamma^2}{1-\gamma S} D_\gamma + \gamma D_\gamma \sum_{n\in\nz}\sum_{p} |c_n(p)|\gamma^{-\deg(p)}.
	\end{align*}
Let $\gamma^{-1}=S+2\in (S,R)$ so that by Lemma \ref{cyclic_derivative_bounded}, $\sum_{n\in\nz}\|\ds_n W\|_{\gamma^{-1}}\leq \mathscr{C}(R,S+2) \|W\|_R$, and hence
	\[
		\sum_{n\in\nz}\sum_p |c_n(p)| \gamma^{-\deg(p)} = \sum_{n\in\nz}\|\ds_n W \|_{\gamma^{-1}}\leq \mathscr{C}(R,S+2)\|W\|_R.
	\]
We have shown
	\[
		D_\gamma \leq \frac{2\gamma^2}{1-\gamma S}D_\gamma + \gamma D_\gamma \mathscr{C}(R,S+2)\|W\|_R = D_\gamma\frac{1+\mathscr{C}(R,S+2)\|W\|_R}{S+2}.
	\]
Thus if
	\[
		\|W\|_R< \frac{S+1}{\mathscr{C}(R,S+2)}  = e(S+1)(S+2)\log\left(\frac{R}{S+2}\right),
	\]
we arrive at the contradiction $D_\gamma<D_\gamma$ unless $D_\gamma=0$, and therefore $\Delta_d=0$ for all $d\geq 0$.
\end{proof}

\section{Construction of monotone transport}\label{Construction}

We now fix $X=(X_n)_{n\in\nz}$ to be a sequence of free semicircular random variables, so that they generate a von Neumann algebra $M$ which is isomorphic to $L(\mathbb{F}_\infty)$, the free group factor with infinitely many generators. Moreover, $M$ is equipped with a faithful normal trace $\tau$, and for each $n\in \nz$ the conjugate variable of $X_n$ with respect to $\tau$ is $X_n$ itself. Thus $\tau_X$ is the free Gibbs state with no perturbation:
	\[
		\tau(X_n P)= \tau\otimes \tau^{op}(\partial_n P)\qquad \forall P\in\ps,\ \forall n\in\nz.
	\]
Since $\sup_n \|X_n\|=\sup_n\|\xi_n\|\leq 2$, we will consider radii $R>2$.

Our goal, given $W\in \ps^{(R)}$ (for some $R>2$), is to construct a sequence $Y=(Y_n)_{n\in \nz}\subset M$ of self-adjoint operators whose joint law with respect to $\tau$ is the free Gibbs state with perturbation $W$. We shall let $\partial_{Y_n}$, $\ds_{Y_n}$, $\ds_Y$, and $\js_Y$ denote the differential operators corresponding to $Y$, so as to differentiate them from the differential operators $\partial_n$, $\ds_n$, $\ds$, and $\js$ corresponding to $X$. Thus we seek $Y$ so that
	\begin{equation}\label{SD_in_Y}
		\tau([Y_n + \ds_{Y_n} W(Y)] P) = \tau\otimes \tau^{op}(\partial_{Y_n} P)\qquad \forall P\in \ps(Y), \forall n\in \nz,
	\end{equation}
where $W(Y)$ is the power series $W$ with each $X_n$ replaced with $Y_n$ for every $n\in \nz$ (so we must require $R\ge\sup_n \|Y_n\|$ in order for $W(Y)$ to be a convergent power series).

Using that $\tau$ is a trace and $\partial_{Y_n}(P)^\dagger=\partial_{Y_n}(P^*)$, it follows that (\ref{SD_in_Y}) is equivalent to
	\[
		\lge Y_n+\ds_{Y_n} W(Y), P\rge_\tau = \lge 1\otimes 1, \partial_{Y_n} P\rge_{\tau\otimes\tau^{op}}\qquad \forall P\in \ps(Y), \forall n\in \nz,
	\]
or
	\[
		\partial_{Y_n}^*(1\otimes 1) = Y_n + \ds_{Y_n} W(Y)\qquad n\in\nz.
	\]
Then letting $1$ denote the identity element in $\mx_\infty( \ps(Y)\hat\otimes_R\ps(Y)^{op})$ (i.e. $1(i,j)=\delta_{i=j} 1\otimes 1$ for every $i,j\in\nz$), we see that (\ref{SD_in_Y}) is equivalent to
	\begin{equation}\label{vector_SD}
		\js_Y^*(1) = Y+ \ds_Y W(Y).
	\end{equation}
Similarly, we have $\js^*(1)=X$.

One way to interpret (\ref{vector_SD}), is that the conjugate variable of each $Y_n$ is a perturbation of $Y_n$ by a cyclic derivative. Since the conjugate variable of each $X_n$ is exactly $X_n$, it is not unreasonable to assume that each $Y_n$ is a perturbation of $X_n$ by a cyclic derivative. That is, $Y=X+\ds g$ for some $g\in \ps^{(R)}$. So we shall attempt to construct $Y$ of this form, and while the above may not seem like sufficient justification for this assumption, we note that in the finite sequence case \cite{GS14} showed that $Y$ always has this form for small $\|W\|_R$.

We follow the same outline as \cite[Section 3]{GS14}; in fact, the extensive analysis in Section \ref{Notation} was done precisely so that we could (as often as possible) simply appeal to the results in \cite{GS14}. We begin with a change of variables formula, which we will use to express (\ref{vector_SD}) entirely in terms of $X$.


\subsection{Equivalent forms of (\ref{vector_SD})}

Recall the formula for $\js^*$ given by (\ref{eqn:J*_formula}).

\begin{lemma}\label{change_of_variables}
Let $R>S>2$. For $Y=(Y_n)_{n\in\nz}\in \ps^{(R)}_\infty$, assume that $\js Y\in \mx_\infty(\ps\hat\otimes_S\ps^{op})$ is invertible in the Banach algebra and that $(\js Y)^*=\js Y$ where $(\js Y)^*$ is defined in \eqref{hadjoint}. Define for $j\in\nz$ and $P\in \ps$
			\[
				\hat\partial_j(P):= \sum_{i\in\nz} \partial_i(P)\#  \js Y^{-1}(i,j).
			\]
Then $\hat\partial_j$ and $\partial_{Y_j}$ agree on $\ps(Y)$. Furthermore, $\js_Y^*(1) = \js^*( (\js Y^{-1})^*)$. Here $(\js Y^{-1})^*\in \mx_\infty(\ps\hat\otimes_S \ps^{op})$ is the $*$-involution of $\js Y^{-1}$ as defined in \eqref{hadjoint}.
\end{lemma}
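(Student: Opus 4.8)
The plan is to reduce both assertions to the matrix identities $\js Y\cdot\js Y^{-1}=\js Y^{-1}\cdot\js Y=1$ in $\mx_\infty(\ps\hat\otimes_S\ps^{op})$, together with the compatibility of the product $\#$ with left/right multiplication and with the involutions.

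For the first assertion, I would first note that $\hat\partial_j$, which is defined on $\ps$ by the stated formula, extends to a bounded map $\ps^{(R)}\to\ps\hat\otimes_S\ps^{op}$: indeed $\sum_i\|\bar\partial_iP\|_{S\otimes_\pi S}\le\mathscr{C}(R,S)\|P\|_R$ by Lemma \ref{free_difference_quotients_bounded}, while the entries of $\js Y^{-1}\in\mx_\infty(\ps\hat\otimes_S\ps^{op})$ are uniformly bounded, so the defining series converges; in particular $\hat\partial_j$ is defined on $\ps(Y)\subseteq\ps^{(R)}$. Next, using that each $\partial_i$ is a derivation together with the identities $(\xi\cdot Q)\#\zeta=(\xi\#\zeta)\cdot Q$ and $(P\cdot\xi)\#\zeta=P\cdot(\xi\#\zeta)$, one checks that $\hat\partial_j$ restricts to a derivation on $\ps(Y)$ valued in $\ps(Y)\otimes\ps(Y)^{op}$. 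Finally, evaluating on generators,
\[
\hat\partial_j(Y_n)=\sum_i\partial_i(Y_n)\#\js Y^{-1}(i,j)=\sum_i\js Y(n,i)\#\js Y^{-1}(i,j)=[\js Y\cdot\js Y^{-1}](n,j)=\delta_{n=j}\,1\otimes1=\partial_{Y_j}(Y_n),
\]
and since $\partial_{Y_j}$ is the unique derivation on $\ps(Y)$ with this property, $\hat\partial_j=\partial_{Y_j}$ on $\ps(Y)$. Multiplying the resulting identity $\partial_{Y_j}(P)=\sum_i\partial_i(P)\#\js Y^{-1}(i,j)$ on the right by $\js Y$ and using $\js Y^{-1}\cdot\js Y=1$ gives the chain rule $\partial_k(P)=\sum_j\partial_{Y_j}(P)\#\js Y(j,k)$ for $P\in\ps(Y)$, which I will use next.

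For the second assertion, applying the (anti-multiplicative) involution $*$ to $\js Y\cdot\js Y^{-1}=1$ and using $(\js Y)^*=\js Y$ and uniqueness of inverses shows $(\js Y^{-1})^*=\js Y^{-1}\in\mx_\infty(\ps\hat\otimes_S\ps^{op})$, so $\js^*((\js Y^{-1})^*)_i=\sum_j\partial_j^*(\js Y^{-1}(i,j))$, while $\js_Y^*(1)_i=\partial_{Y_i}^*(1\otimes1)$. Both vectors are pinned down by their $L^2(M,\tau)$-pairings against $P\in\ps(Y)$, so it suffices to check
\[
\langle1\otimes1,\ \partial_{Y_i}P\rangle_{\tau\otimes\tau^{op}}=\sum_j\big\langle\js Y^{-1}(i,j),\ \partial_jP\big\rangle_{\tau\otimes\tau^{op}}\qquad\text{for all }P\in\ps(Y).
\]
Substituting the chain rule $\partial_jP=\sum_k(\partial_{Y_k}P)\#\js Y(k,j)$ into the right-hand side and then applying the adjunction $\langle\xi,\eta\#\zeta\rangle_{\tau\otimes\tau^{op}}=\langle\xi\#\zeta^*,\eta\rangle_{\tau\otimes\tau^{op}}$ (a one-line consequence of traciality of $\tau$), the right-hand side becomes $\sum_k\langle\sum_j\js Y^{-1}(i,j)\#\js Y(k,j)^*,\ \partial_{Y_k}P\rangle_{\tau\otimes\tau^{op}}$; since $\js Y(k,j)^*=\js Y(j,k)$ by $(\js Y)^*=\js Y$, the inner sum equals $[\js Y^{-1}\cdot\js Y](i,k)=\delta_{i=k}1\otimes1$, and the expression collapses to the left-hand side.

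The main point requiring care — rather than a real obstacle — is the bookkeeping with the three involutions $*,\dagger,\diamond$ and with the left/right $\#$-actions, together with checking convergence of the $\nz$-indexed sums; the latter is controlled uniformly by $\ps^{(R)}\subseteq\Dom(\bar\partial_n)$ via Lemma \ref{free_difference_quotients_bounded} and by $\js Y,\js Y^{-1}\in\mx_\infty(\ps\hat\otimes_S\ps^{op})$.
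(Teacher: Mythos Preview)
Your proposal is correct and follows essentially the same route as the paper: check boundedness via Lemma~\ref{free_difference_quotients_bounded}, verify the Leibniz rule and evaluate on generators to identify $\hat\partial_j$ with $\partial_{Y_j}$, establish $(\js Y^{-1})^*=\js Y^{-1}$, and then unwind the $L^2$-pairing using the adjunction $\langle\xi,\eta\#\zeta\rangle=\langle\xi\#\zeta^*,\eta\rangle$. The one minor difference is that for the final identity the paper substitutes $\partial_{Y_j}P=\sum_i\partial_i(P)\#\js Y^{-1}(i,j)$ directly into $\langle1\otimes1,\partial_{Y_j}P\rangle$, whereas you invert first to get the chain rule $\partial_kP=\sum_j\partial_{Y_j}(P)\#\js Y(j,k)$ and substitute that into the other side; the paper's route is one step shorter, and it also carries out the Fubini justification for $(\js Y^{-1})^*=\js Y^{-1}$ explicitly (since $*$ is not \emph{a priori} a well-defined involution on $\mx_\infty$), which your ``apply the anti-multiplicative involution $*$'' glosses over but which goes through for the same reasons.
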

\begin{proof}
We first note that by Lemma \ref{free_difference_quotients_bounded}
	\begin{align*}
		\sum_{i\in\nz} \|\partial_i (P)\# \js Y^{-1}(i,j)\|_{S\otimes_\pi S} &\leq \sum_{i\in \nz} \|\partial_i P\|_{S\otimes_\pi S} \|  \js Y^{-1}(i,j) \|_{S\otimes_\pi S}\\
			& \leq \| \js Y^{-1}\|_{S,\infty,1} \sum_{i\in \nz} \|\partial_i P\|_{S\otimes_\pi S}\\
			& \leq \| \js Y^{-1}\|_{S,\infty,1} \mathscr{C}(R,S) \|P\|_R.
	\end{align*}
Thus each $\hat\partial_j$ extends to a bounded map $\hat\partial_j\colon \ps^{(R)}\to \ps\hat\otimes_S\ps^{op}$. Also, since
	\[
		\partial_i(AB)= \partial_i(A)\cdot B + A\cdot \partial_i(B) = (1\otimes B)\# \partial_i(A) + (A\otimes 1)\# \partial_i(B),
	\]
it follows that $\hat\partial_j$ satisfies the Leibniz rule. For each $k\in \nz$ we then have
	\begin{align*}
		\hat\partial_j (Y_k) = \sum_{i\in\nz} \bar\partial_i(Y_k) \# \js Y^{-1}(i,j) = \sum_{i\in \nz} \js Y(k,i)\# \js Y^{-1}(i,j) = \delta_{k=j} 1\otimes 1,
	\end{align*}
which, along with the Leibniz rule, characterizes $\partial_{Y_j}$ on $\ps(Y)$.

For each $i,j\in \nz$, define $H(i,j):=\js Y^{-1}(j,i)^*$. We claim $H=\js Y^{-1}$, in which case $(\js Y^{-1})^*=H\in \mx_\infty(\ps\hat\otimes_S\ps^{op})$. Since $(\js Y)^{-1}$ exists in $\mx_\infty(\ps\hat\otimes_S \ps^{op})$, we have
	\begin{align*}
		\sum_{j\in\nz} \js Y(i,j)H(j,k)=\sum_{j\in\nz} \js Y(j,i)^*\js Y^{-1}(k,j)^*=\sum_{j\in\nz} [\js Y^{-1}(k,j) \js Y(j,i)]^*=\de_{k=i},
	\end{align*}
where we have used $(\js Y)^*=\js Y$ to assert $\js Y(i,j)=\js Y(j,i)^*$. Thus
	\begin{align*}
		H(i,l)&=\sum_{k\in\nz} \left[\sum_{j\in\nz} \js Y^{-1}(i,j) \js Y(j,k)\right]H(k,l) \\
			&= \sum_{j\in\nz}  \js Y^{-1}(i,j) \left[\sum_{k\in\nz}  \js Y(j,k)H(k,l)\right] =(\js Y^{-1})(i,l).
	\end{align*}
Note that the change of order of summation in the above is indeed justified:
	\begin{align*}
		\sum_{k\in\nz}\sum_{j\in\nz} &\| \js Y^{-1}(i,j)\|_{S\otimes_\pi S} \|\js Y(j,k)\|_{S\otimes_\pi S} \|H(k,l)\|_{S\otimes_\pi S}\\
			&= \sum_{k\in\nz}\sum_{j\in\nz} \| \js Y^{-1}(i,j)\|_{S\otimes_\pi S} \|\js Y(j,k)\|_{S\otimes_\pi S} \|\js Y^{-1}(l,k)\|_{S\otimes_\pi S}\\
			&\leq \left(\sup_{k\in\nz} \sum_{j\in\nz} \| \js Y^{-1}(i,j)\|_{S\otimes_\pi S} \|\js Y(j,k)\|_{S\otimes_\pi S}\right) \sum_{k\in\nz}  \|\js Y^{-1}(l,k)\|_{S\otimes_\pi S}\\
			&\leq \left( \sum_{j\in \nz} \|\js Y^{-1}(i,j)\|_{S\otimes_\pi S} \sup_{j\in \nz} \sup_{k\in \nz} \|\js Y(j,k)\|_{S\otimes_\pi S}\right) \|\js Y^{-1}\|_{S,\infty,1}\\
			&\leq \|\js Y^{-1}\|_{S,\infty,1} \|\js Y\|_{S,\infty, 1}\|\js Y^{-1}\|_{S,\infty,1}.
	\end{align*}
It follows that $(\js Y^{-1})^*=H=\js Y^{-1}\in \mx_\infty(\ps\hat\otimes_S \ps^{op})$.

To show $\js_Y^*(1) = \js^*( (\js Y^{-1})^*)$, we establish the equality for each entry. Recall
	\[
		\left[\js^*\left( (\js Y^{-1})^*\right)\right]_j = \sum_{i\in\nz} \partial_i^*\left( (\js Y^{-1})^*(j,i)\right)= \sum_{i\in\nz} \partial_i^*\left( \js Y^{-1}(i,j)^*\right).
	\]
Given $P\in \ps(Y)$, we have
	\begin{align*}
		\lge 1\otimes 1, \partial_{Y_j}(P)\rge_{\tau\otimes \tau^{op}} &= \lge 1\otimes 1, \sum_{i\in \nz} \partial_i (P)\# \js Y^{-1}(i,j) \rge_{\tau\otimes\tau^{op}}\\
			&= \sum_{i\in \nz} \lge \js Y^{-1}(i,j)^*, \partial_i P\rge_{\tau\otimes\tau^{op}}\\
			&= \lge\sum_{i\in\nz} \partial_i^*\left( \js Y^{-1}(i,j)^*\right), P\rge_\tau.
	\end{align*}
Thus $1\otimes 1\in \Dom(\partial_{Y_j}^*)$ for each $j\in\nz$ with
	\[
		\partial_{Y_j}^*(1\otimes 1) = \sum_{i\in\nz} \partial_i^*\left( \js Y^{-1}(i,j)^*\right) = \left[ \js^*\left( (\js Y^{-1})^*\right)\right]_j.\qedhere
	\]
\end{proof}

The following lemma follows from the same proof used in \cite[Lemma 3.1.(iii)]{GS14}, which checked the claimed equalities entrywise.

\begin{lemma}\label{involutions_of_J}
Let $R>2$. If $g\in\ps^{(R)}$, then $(\js\ds g)^\diamond=\js\ds g$. Moreover, if $g=g^*$ then $(\js\ds g)^*=(\js\ds g)^\dagger = \js\ds g$.
\end{lemma}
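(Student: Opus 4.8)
The plan is to verify each of the three claimed identities for $\js\ds g$ entrywise, reducing everything to the corresponding statement for the scalar-valued operator $\js\ds$ acting on a single power series, which in turn follows from the finite-variable computation in \cite[Lemma 3.1.(iii)]{GS14} together with Remark \ref{polynomials_supported_finitely}. First I would recall that by definition $\js\ds g(i,j) = \bar\partial_j \ds_i g$, so the three assertions read: $(\bar\partial_j\ds_i g)^\diamond = \bar\partial_i\ds_j g$ for all $i,j$ (this is what $(\js\ds g)^\diamond = \js\ds g$ unpacks to, using the definition $[H^\diamond](i,j) = H(j,i)^\diamond$); and when $g=g^*$, also $(\bar\partial_j\ds_i g)^* = \bar\partial_i\ds_j g$ and $(\bar\partial_j\ds_i g)^\dagger = \bar\partial_j\ds_i g$.

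Next I would observe that it suffices to prove these for $g$ a monomial in finitely many of the $X_n$, by linearity and by the density of $\ps$ in $\ps^{(R)}$ together with the boundedness of $\bar\partial_j$, $\ds_i$, and the three involutions (all established in Section \ref{Notation}: $\ds_i$ and $\bar\partial_j$ are bounded by Lemmas \ref{cyclic_derivative_bounded} and \ref{free_difference_quotients_bounded}, and $*,\dagger,\diamond$ are isometries on $\ps\hat\otimes_R\ps^{op}$). Once restricted to such a $g$, all the expressions $\bar\partial_j\ds_i g$ live in $\cz\lge X_{k}\colon k\in F\rge\hat\otimes_R \cz\lge X_k\colon k\in F\rge^{op}$ for a fixed finite set $F$, and the identities to be checked are exactly the entrywise identities verified in the finite-variable case in \cite[Lemma 3.1.(iii)]{GS14}. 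So I would simply cite that lemma, applied to the finitely generated subalgebra, to conclude each of the three equalities holds entry-by-entry, and hence as identities in $\mx_\infty(\ps\hat\otimes_R\ps^{op})$ (or $\mx_1$) after noting that $\js\ds g\in\mx_1(\ps\hat\otimes_R\ps^{op})$ by Corollary \ref{JD_bounded} so the reordering $(i,j)\mapsto(j,i)$ implicit in $*$ and $\diamond$ is harmless.

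If one prefers a self-contained argument rather than citing \cite{GS14}, the core computation is short: writing $g$ as a sum of monomials, for a monomial $p = X_{k_1}\cdots X_{k_d}$ one has $\ds_i p = \sum_{k_\ell = i} X_{k_{\ell+1}}\cdots X_{k_d} X_{k_1}\cdots X_{k_{\ell-1}}$, and then $\bar\partial_j$ of each resulting monomial splits it at every occurrence of $j$; keeping track of the two cut points $\ell$ (for the cyclic derivative) and the position of $j$ shows that the total collection of terms appearing in $\bar\partial_j\ds_i g$ is invariant under simultaneously swapping $i\leftrightarrow j$ and applying $\diamond$ (which reverses the tensor legs), because a pair of cut points in a cyclic word is an unordered pair. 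The $*$-identity when $g = g^*$ follows since $\ds_i(g^*) = \ds_i(g)^* = \ds_i g$ and $\partial_j(P^*) = \partial_j(P)^\dagger$, combined with $\dagger = {*}\circ\diamond$ and the $\diamond$-identity just proved; and then $\dagger = (*)\circ(\diamond)$ gives the third. The main obstacle is purely bookkeeping: being careful that the $\diamond$ involution's interchange of the two tensor factors is precisely matched by the $i\leftrightarrow j$ transposition of matrix indices, and that all the infinite sums involved converge absolutely in $\|\cdot\|_{R\otimes_\pi R}$ so that the entrywise identities assemble into the claimed identities in the matrix algebra — but this convergence is already guaranteed by Corollary \ref{JD_bounded}, so there is no real analytic difficulty.
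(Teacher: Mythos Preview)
Your proposal is correct and follows exactly the paper's approach: the paper's proof consists solely of the remark that the claimed equalities are checked entrywise as in \cite[Lemma 3.1.(iii)]{GS14}, which is precisely what you do (with the additional, but optional, self-contained monomial computation). One minor slip: Corollary \ref{JD_bounded} places $\js\ds g$ in $\mx_1(\ps\hat\otimes_S\ps^{op})$ for $S\in(2,R)$ rather than at radius $R$, but this is harmless since the entrywise identities and the isometry of the involutions hold at any such $S$.
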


\begin{cor}
Let $R>S>2$. For $g=g^*\in \ps^{(R)}$, write $f=\ds g$ and $Y=X+f$, so that $\js Y= 1+ \js f$. Assume that $1+\js f\in \mx_\infty(\ps\hat\otimes_S\ps^{op})$ is invertible. Then (\ref{vector_SD}) is equivalent to the equation
	\begin{equation}\label{vector_SD_in_X}
		\js^*\left(\frac{1}{1+\js f}\right) = X+f + (\ds W)(X+f).
	\end{equation}
\end{cor}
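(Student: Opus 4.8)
The plan is to rewrite both sides of \eqref{vector_SD} entirely in terms of $X$, handling the left-hand side with Lemma \ref{change_of_variables} and the right-hand side with a chain rule for cyclic derivatives, and to observe that the two resulting equations coincide.

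First I would verify that $Y=X+f$ satisfies the hypotheses of Lemma \ref{change_of_variables} (with the same $R>S>2$). Invertibility of $\js Y=1+\js f$ in $\mx_\infty(\ps\hat\otimes_S\ps^{op})$ is exactly the standing assumption. Since $g=g^*$, Lemma \ref{involutions_of_J} gives $(\js\ds g)^*=\js\ds g$, that is $(\js f)^*=\js f$; as $1^*=1$ (immediate from \eqref{hadjoint} together with $(1\otimes 1)^*=1\otimes 1$), we conclude $(\js Y)^*=1+\js f=\js Y$. Lemma \ref{change_of_variables} then yields $\js_Y^*(1)=\js^*\big((\js Y^{-1})^*\big)$, and its proof shows that $(\js Y)^*=\js Y$ forces $(\js Y^{-1})^*=\js Y^{-1}$, which by definition of the inverse in the Banach algebra is $(1+\js f)^{-1}=\frac{1}{1+\js f}$. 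Hence the left-hand side of \eqref{vector_SD} equals the left-hand side of \eqref{vector_SD_in_X}.

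For the right-hand side, $Y=X+f$ by construction, so it only remains to identify $\ds_Y W(Y)$ with $(\ds W)(X+f)$. Here I would use that the substitution homomorphism $\ps\to\ps(Y)$, $P\mapsto P(Y)$, intertwines $\partial_n$ with $\partial_{Y_n}$: on a monomial this is just the Leibniz rule together with $\partial_{Y_n}(Y_k)=\delta_{n=k}1\otimes 1$, applied leg-by-leg, and it extends to $\ps^{(R)}$ by the continuity estimates of Section \ref{Notation} (Lemma \ref{free_difference_quotients_bounded} and the bound $\|W(X+f)\|_S\le\|W\|_R$). Applying $m\circ\diamond$, which commutes with the substitution since it is an algebra homomorphism, gives $\ds_{Y_n}(W(Y))=(\ds_n W)(Y)$ for every $n$, i.e. $\ds_Y W(Y)=(\ds W)(X+f)$. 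Thus the right-hand side of \eqref{vector_SD} is $X+f+(\ds W)(X+f)$, which is the right-hand side of \eqref{vector_SD_in_X}. Combining the two computations, \eqref{vector_SD} and \eqref{vector_SD_in_X} are literally the same identity in $\ps^{(S)}_\infty$ (equivalently, after applying $\tau$ against all $P\in\ps$, in the Schwinger--Dyson form), and are therefore equivalent.

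The only genuinely delicate part is bookkeeping: keeping straight which of the three involutions $*$, $\dagger$, $\diamond$ appears at each step (in particular that it is $(\js Y^{-1})^*$, not $(\js Y^{-1})^\dagger$ or $(\js Y^{-1})^\diamond$, that enters $\js^*$), and making sure the various infinite sums — those defining $\js^*$, the Neumann-type series for $\frac{1}{1+\js f}$, and the substitution $W(X+f)$ — all converge in the appropriate projective/summable norms. All of this, however, is already subsumed by Lemmas \ref{free_difference_quotients_bounded}, \ref{Jstar_bounded}, \ref{change_of_variables} and the hypothesis $R>S>2$, so no new estimate is required; the corollary is a formal consequence of the two preceding lemmas plus the chain rule above.
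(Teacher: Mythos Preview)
Your proof is correct and follows the same line as the paper's: use Lemma \ref{involutions_of_J} to get $(\js Y)^*=\js Y$, then Lemma \ref{change_of_variables} to rewrite $\js_Y^*(1)$ as $\js^*((1+\js f)^{-1})$, while the right-hand side is handled by the substitution identity $\ds_Y W(Y)=(\ds W)(Y)$. The paper's own proof is two lines long and simply says ``the rest of the equivalence follows easily''; you have supplied the chain-rule justification that the paper leaves implicit, but the approach is identical.
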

\begin{proof}
Lemma \ref{involutions_of_J} implies that $(\js f)^*=\js f$, and of course $1^* =1$. So $(\js Y)^*=\js Y$, and consequently $(\js Y^{-1})^*=\js Y^{-1}$. Thus Lemma \ref{change_of_variables} implies $\js_Y^*(1) = \js^*( \js Y^{-1})$, and the rest of the equivalence follows easily.
\end{proof}

The next lemma we state for the convenience of the reader since it is simply \cite[Lemma 3.3]{GS14}, the proof of which works in our present context.

\begin{lemma}
Let $R>S>2$. For $g=g^*\in \ps^{(R)}$, write $f=\ds g$ and $Y=X+f$. Assume that $1+\js f\in \mx_\infty(\ps\hat\otimes_S \ps^{op})$ is invertible. Let
	\[
		K(f) = - \js^*(\js f) - f.
	\]
Then (\ref{vector_SD_in_X}) is equivalent to
	\begin{align}\label{K_SD}
		K(f) = \ds(W(X+f)) + (\js f)\# f + (\js f)\# \js^*\left(\frac{\js f}{1+\js f}\right) - \js^*\left( \frac{(\js f)^2}{1+\js f}\right)
	\end{align}
\end{lemma}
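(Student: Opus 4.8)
The plan is to derive \eqref{K_SD} from \eqref{vector_SD_in_X} purely by algebraic manipulation, using the definition $K(f)=-\js^*(\js f)-f$ together with the Neumann-series decomposition of $\tfrac{1}{1+\js f}$. First I would rewrite the left-hand side of \eqref{vector_SD_in_X}, namely $\js^*\!\big(\tfrac{1}{1+\js f}\big)$, by splitting $\tfrac{1}{1+\js f}=1-\js f+\tfrac{(\js f)^2}{1+\js f}$; this is valid in $\mx_\infty(\ps\hat\otimes_S\ps^{op})$ since $1+\js f$ is assumed invertible there and $\js^*$ is a bounded linear map on $\mx_\infty(\ps\hat\otimes_S\ps^{op})$ (as established after Lemma \ref{Jstar_bounded}, using that $X$ is semicircular so $\sup_n\|\xi_n\|_R=\sup_n\|X_n\|_R<\infty$). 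Applying $\js^*$ term by term and using $\js^*(1)=X$ gives
\[
\js^*\!\left(\frac{1}{1+\js f}\right) = X - \js^*(\js f) + \js^*\!\left(\frac{(\js f)^2}{1+\js f}\right).
\]

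Next I would substitute this into \eqref{vector_SD_in_X}, which becomes
\[
X - \js^*(\js f) + \js^*\!\left(\frac{(\js f)^2}{1+\js f}\right) = X + f + (\ds W)(X+f),
\]
and cancel $X$ from both sides. Recognizing that $-\js^*(\js f) - f = K(f)$, I would move the remaining terms around to isolate $K(f)$, obtaining
\[
K(f) = (\ds W)(X+f) - \js^*\!\left(\frac{(\js f)^2}{1+\js f}\right).
\]
At this point the target equation \eqref{K_SD} has two extra terms, $(\js f)\# f$ and $(\js f)\#\js^*\!\big(\tfrac{\js f}{1+\js f}\big)$, so the remaining task is to show these cancel, i.e.\ that
\[
(\js f)\# f + (\js f)\# \js^*\!\left(\frac{\js f}{1+\js f}\right) = 0 ?
\]
That is \emph{not} literally zero; rather, the correct reading of \cite[Lemma 3.3]{GS14} is that one re-expresses $\js^*\!\big(\tfrac{(\js f)^2}{1+\js f}\big)$ and $-\js^*(\js f)$ using the product rule for $\js^*$ against the module action $\#$. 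Concretely, I would use the identity (valid for $H\in\mx_1$ and appropriate factors, and provable entrywise from \eqref{adjoint_formula}) that relates $\js^*(G\#H)$ to $G\#\js^*(H)$ plus correction terms involving $\js$ applied to the entries of $G$; applying this with $G=\js f$ and $H=\tfrac{\js f}{1+\js f}$ produces exactly the $(\js f)\#\js^*\!\big(\tfrac{\js f}{1+\js f}\big)$ term, while the analogous manipulation of $-\js^*(\js f)=-\js^*((\js f)\#1)$ against $f$ (using $\js^*(1)=X$, $Y=X+f$) produces the $(\js f)\# f$ term. Collecting everything then yields \eqref{K_SD}.

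The main obstacle I anticipate is the bookkeeping in the product-rule step for $\js^*$: one must justify that all the rearrangements of doubly- and triply-indexed sums converge absolutely (so Fubini applies), exactly as in the justification displayed in the proof of Lemma \ref{change_of_variables}, and one must correctly track the three involutions $*,\dagger,\diamond$ when moving entries of $\js f$ across $\#$, using Lemma \ref{involutions_of_J} to know $(\js f)^\diamond=\js f=(\js f)^*=(\js f)^\dagger$ (since $g=g^*$). Since the paper explicitly says the proof of \cite[Lemma 3.3]{GS14} "works in our present context," I would simply check that each manipulation there uses only: boundedness of $\js^*$ on $\mx_\infty$ and $\mx_1$, the Neumann series for $(1+\js f)^{-1}$ in $\mx_\infty(\ps\hat\otimes_S\ps^{op})$, the identity $\js^*(1)=X$, the Leibniz-type rule for $\js^*$ against $\#$, and the symmetry relations for $\js f$ — all of which are available here — and then transcribe it, flagging that the absolute-convergence checks are exactly those already carried out in Section \ref{Notation}.
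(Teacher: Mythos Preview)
Your derivation up to the displayed equation
\[
K(f) = (\ds W)(X+f) - \js^*\!\left(\frac{(\js f)^2}{1+\js f}\right)
\]
is correct and is indeed equivalent to \eqref{vector_SD_in_X}. The gap is in the next step: you treat $(\ds W)(X+f)$ and $\ds(W(X+f))$ as the same object, so that \eqref{K_SD} appears to differ from your equation only by the two ``extra'' terms. They are not the same. The first is the cyclic gradient of $W$ evaluated at $Y=X+f$; the second is the cyclic gradient \emph{with respect to $X$} of the composed power series. They are related by the chain rule
\[
\ds\bigl(W(X+f)\bigr) \;=\; (\js Y)\#(\ds W)(Y) \;=\; (1+\js f)\#(\ds W)(X+f),
\]
which holds because $(\js Y)^\diamond=\js Y$ (Lemma~\ref{involutions_of_J}); this is precisely the content of \cite[Lemma~3.2]{GS14}. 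Your proposed ``product rule for $\js^*$'' is not the relevant mechanism here: it produces correction terms involving derivatives of the entries of $\js f$, not the terms $(\js f)\# f$ and $(\js f)\#\js^*(\tfrac{\js f}{1+\js f})$ that you need, and it cannot account for the passage from $(\ds W)(X+f)$ to $\ds(W(X+f))$.

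The argument in \cite[Lemma~3.3]{GS14}, which the paper invokes verbatim, is instead to apply the invertible operator $(1+\js f)\#$ to both sides of \eqref{vector_SD_in_X}. On the right this turns $(\ds W)(X+f)$ into $\ds(W(X+f))$ via the chain rule and produces the $(\js f)\# f$ term from $(1+\js f)\#(X+f)$; on the left one expands $(1+\js f)\#\js^*\bigl(\tfrac{1}{1+\js f}\bigr)$ using $\tfrac{1}{1+\js f}=1-\tfrac{\js f}{1+\js f}$ and $\tfrac{\js f}{1+\js f}=\js f-\tfrac{(\js f)^2}{1+\js f}$ together with $\js^*(1)=X$. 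After cancelling $X$ and $(\js f)\#X$ from both sides one obtains exactly \eqref{K_SD}. Because $(1+\js f)$ is invertible in $\mx_\infty(\ps\hat\otimes_S\ps^{op})$, the manipulation is reversible, yielding the claimed equivalence.
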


We will see that both sides of (\ref{K_SD}) can be expressed as cyclic gradients. We begin by demonstrating this for the last two terms in (\ref{K_SD}).

\begin{lemma}\label{the_trick_lemma}
Let $R>2$. For $g\in \ps^{(R)}$, write $f=\ds g$. Then for any $m\geq -1$
	\begin{align}\label{the_trick}
		(\js f)\# \js^*\left( (\js f)^{m+1}\right) -& \js^*\left( (\js f)^{m+2}\right)\nonumber\\
				&= \frac{1}{m+2} \ds\circ(1\otimes \tau + \tau\otimes 1)\circ \Tr\left[ (\js f)^{m+2}\right].
	\end{align}
\end{lemma}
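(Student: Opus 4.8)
The plan is to follow the strategy of \cite[Lemma 3.4]{GS14}, reducing the identity \eqref{the_trick} to an entrywise computation in $\ps\hat\otimes_R\ps^{op}$ and then invoking the algebraic identities relating $\partial_n$, $\ds_n$, $\js$, and the involution $\diamond$. First I would recall that $f=\ds g$ means $f_i=\ds_i g$, so $\js f(i,j)=\bar\partial_j\ds_i g$, and by Lemma \ref{involutions_of_J} we have $(\js f)^\diamond = \js f$; since the $\diamond$ involution is multiplicative with $(\xi\zeta)^\diamond = \zeta^\diamond\xi^\diamond$ reversed appropriately on $\mx_\infty$, each power $(\js f)^{m+2}$ is again $\diamond$-symmetric in the matrix sense. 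This symmetry is what makes the right-hand side — a cyclic gradient of a trace — match the left-hand side. Note that $(\js f)^{m+2}\in\mx_1(\ps\hat\otimes_R\ps^{op})$ for $m\geq -1$ because $\js f\in\mx_\infty$ while one power already carries $\js\ds g\in\mx_1$ by Corollary \ref{JD_bounded}; hence $\Tr[(\js f)^{m+2}]\in\ps\hat\otimes_R\ps^{op}$ and all expressions are well-defined.

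The key computational step is the following. Using the formula \eqref{adjoint_formula} for $\partial_j^*$ (valid here since $X$ is semicircular, so $\xi_j=X_j$ and $\partial_j^*(1\otimes 1)=X_j$) together with $\js^*$ as defined in \eqref{eqn:J*_formula}, I would write out $\js^*\bigl((\js f)^{k}\bigr)_i = \sum_j \partial_j^*\bigl((\js f)^k(i,j)\bigr)$ and expand $\partial_j^*$ into its three terms. The $\xi_j$-term contributes $(\js f)^k\#X$ in vector form, and this is precisely what $(\js f)\#\js^*((\js f)^{m+1})$ cancels against in $(\js f)\#\js^*((\js f)^{m+1}) - \js^*((\js f)^{m+2})$ — this is the same telescoping that occurs in \cite{GS14}, now carried out with the bookkeeping of the infinitely-indexed matrices $\mx_\infty(\ps\hat\otimes_R\ps^{op})$, whose convergence is guaranteed by the norm estimates of Section \ref{Notation} (in particular Lemma \ref{Jstar_bounded} and Lemma \ref{free_difference_quotients_bounded}). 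What remains after this cancellation is a sum of the two ``$(1\otimes\tau)(\partial_j \cdot)$'' and ``$(\tau\otimes 1)(\partial_j\cdot)$'' contributions, and the task is to recognize this remaining expression as $\frac{1}{m+2}\ds\circ(1\otimes\tau+\tau\otimes 1)\circ\Tr[(\js f)^{m+2}]$.

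To make that last identification I would use the combinatorial heart of the argument: the identity $\ds_n = m\circ\diamond\circ\partial_n$ (from Subsection \ref{Free difference quotients, conjugate variables, and cyclic derivatives}), the Leibniz rule for $\partial_n$ applied to the product $(\js f)(i,k_1)\#(\js f)(k_1,k_2)\#\cdots\#(\js f)(k_{m+1},j)$, and the $\diamond$-symmetry of $\js f$ together with the ``cocycle'' relation $\bar\partial_i(\ds_k g) = \bar\partial_i(\ds_k g)$ rewritten via the identities $\partial_i\circ\ds_k = \partial_k\circ\ds_i{}^{\diamond}$-type symmetry that underlies $(\js\ds g)^\diamond = \js\ds g$. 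Expanding $\partial_n$ of the trace of an $(m+2)$-fold product by Leibniz produces $m+2$ cyclically-related terms, which is exactly where the prefactor $\frac{1}{m+2}$ comes from; applying $(1\otimes\tau+\tau\otimes 1)$ and then $m\circ\diamond$ collapses each term to one of the leftover pieces from the previous paragraph, and summing over the cyclic rotations reconstitutes them all. Since every step here is an algebraic identity among power series that holds already at the level of monomials — and every rearrangement of sums over $\nz$ is absolutely convergent by the $\mx_1$/$\mx_\infty$ estimates — it suffices to check the case $g$ a monomial (or even just track a single term $(\js f)(i,j)$), exactly as in \cite{GS14}.

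\textbf{Main obstacle.} The genuine work is not the cancellation of the $\xi$-terms (which telescopes formally) but verifying that the surviving $(1\otimes\tau)$ and $(\tau\otimes 1)$ terms assemble into a single cyclic gradient with the right normalization $\frac1{m+2}$; this requires careful use of the Leibniz rule on an $(m+2)$-fold $\#$-product inside $\Tr$, the cyclicity of $(\tau\otimes\tau^{op})\circ\Tr$, and the $\diamond$-symmetry of $\js f$ to match up the two halves coming from $1\otimes\tau$ versus $\tau\otimes 1$. In the infinite-variable setting the only extra burden over \cite{GS14} is justifying the interchange of the (now infinite) sums over matrix indices $i,j,k_1,\dots$ and over monomials, which is handled uniformly by the hypothesis $g\in\ps^{(R)}$ and the boundedness of $\js$, $\js^*$, $\Tr$ on the relevant $\mx_p$ spaces established earlier; granting Remark \ref{polynomials_supported_finitely}, one may even reduce to finitely many variables and quote \cite[Lemma 3.4]{GS14} verbatim.
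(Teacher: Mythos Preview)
Your proposal is correct and follows essentially the same route as the paper: both reduce to the algebraic identity already checked in \cite[Lemma 3.4]{GS14}, with the infinite-variable content being only the justification that all series converge via the $\mx_1/\mx_\infty$ estimates of Section \ref{Notation}. One small slip: $\js\ds g$ lands in $\mx_1(\ps\hat\otimes_S\ps^{op})$ only for $S<R$ (Corollary \ref{JD_bounded} requires a drop in radius), so your assertions ``$(\js f)^{m+2}\in\mx_1(\ps\hat\otimes_R\ps^{op})$'' and ``$\Tr[(\js f)^{m+2}]\in\ps\hat\otimes_R\ps^{op}$'' should be stated at some $S\in(2,R)$ rather than at $R$ itself --- the paper handles this by fixing such an $S$ at the outset and, for the right-hand side, an intermediate $T\in(S,R)$.

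The only methodological difference is that the paper does not carry out the direct term-by-term cancellation you sketch; instead it observes that two elements $F,G\in\ps^{(S)}_1$ agree as soon as $\sum_n\tau(F_nP_n)=\sum_n\tau(G_nP_n)$ for every finitely supported $P$, and then notes that the proof of \cite[Lemma 3.4]{GS14} establishes exactly this weak equality for the two sides of \eqref{the_trick}. Your direct expansion (cancel the $\xi_j=X_j$ contributions, then match the $(1\otimes\tau)$ and $(\tau\otimes 1)$ remainders to the cyclic gradient via Leibniz and the $\diamond$-symmetry of $\js f$) is precisely what that cited proof does after pairing, so the two approaches are the same computation viewed from opposite ends.
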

\begin{proof}
Let $S\in (2,R)$. We first note that $\js f=\js \ds g \in \mx_1(\ps\hat\otimes_S\ps^{op})$ (a Banach algebra) by Lemma \ref{JD_bounded}, and hence $(\js f)^n \in \mx_1(\ps\hat\otimes_S \ps^{op})$ for all $n\in \nz$. Then the comments following Lemma \ref{Jstar_bounded} imply $\js^*( (\js f)^n)\in \ps^{(S)}_1$ for all $n\in \nz$, and moreover $(\js f)\# \js^*( (\js f)^n)\in \ps^{(S)}_1$ by the bounded action of $\mx_1(\ps\hat\otimes_S\ps^{op})$ on $\ps^{(S)}_1$. All of this is to say that the left-hand side of (\ref{the_trick}) defines an element of $\ps^{(S)}_1$.

For the right-hand side, first note that $\Tr\left[(\js\ds g)^{m+2}\right]\in \ps\hat\otimes_T\ps^{op}$ for any $T\in (S,R)$. Then since $\tau$ is a state and $\|\cdot\|_T$ dominates the operator norm for any $T>S>2$, we know $(1\otimes \tau +\tau\otimes 1)\circ \Tr\left[(\js f)^{m+2}\right]\in \ps^{(T)}$. It then follows from Lemma \ref{cyclic_derivative_bounded} that the right-hand side also gives an element of $\ps_1^{(S)}$.

Now, we claim that for $F, G\in \ps^{(S)}_1$  if
	\[
		\sum_{n\in\nz} \tau( F_n P_n) = \sum_{n\in \nz} \tau(G_n P_n)
	\]
for any finitely supported sequence $P=(P_n)_{n\in\nz}\in \ps^{(S)}_\infty$, then $F=G$. Indeed, by letting $P\in \ps^{(S)}_\infty$ be the sequence with $(F_n - G_n)^*$ in the $n$-th entry and zeros elsewhere for some $n\in\nz$ shows $F_n=G_n$ (since $\tau$ is faithful). Letting $n$ range over $\nz$ we get $F=G$. Thus we may appeal to the proof of \cite[Lemma 3.4]{GS14}, which checks this for the two sides of (\ref{the_trick}), allowing our arsenal of bounds from Section \ref{Notation} to assuage any fears of divergence.
\end{proof}

\begin{lemma}\label{Q_Taylor_series}
Let $R>S>2$. For $g=g^*\in \ps^{(R)}$, write $f=\ds g$. Assume $\|\js f\|_{S,1,1}<1$ and let
	\[
		Q(g) =(1\otimes \tau+\tau\otimes1)\circ \Tr\left[ \js f - \log(1+\js f)\right].
	\]
 Then
	\[
		\ds Q(g) = (\js f) \# \js^*\left(\frac{\js f}{1+\js f}\right) - \js^*\left( \frac{ (\js f)^2}{1+\js f}\right).
	\]
\end{lemma}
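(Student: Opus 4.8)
The plan is to follow the corresponding computation in \cite[Lemma 3.5]{GS14} and expand both sides of the asserted identity as power series in $\js f$. Fix $S'\in(2,S)$. By Corollary \ref{JD_bounded} we have $\js f=\js\ds g\in\mx_1(\ps\hat\otimes_S\ps^{op})$, which is a Banach algebra, and by hypothesis $\|\js f\|_{S,1,1}<1$; consequently $1+\js f$ is invertible in $\mx_\infty(\ps\hat\otimes_S\ps^{op})$ and
\[
\frac{\js f}{1+\js f}=\sum_{k\geq 1}(-1)^{k+1}(\js f)^{k}\qquad\text{and}\qquad \frac{(\js f)^{2}}{1+\js f}=\sum_{k\geq 1}(-1)^{k+1}(\js f)^{k+1}
\]
converge absolutely in $\mx_1(\ps\hat\otimes_S\ps^{op})$ (which is a right ideal in $\mx_\infty(\ps\hat\otimes_S\ps^{op})$), as does $\js f-\log(1+\js f)=\sum_{k\geq 2}\frac{(-1)^{k}}{k}(\js f)^{k}$.

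First I would apply to these series the operator $\js^{*}$, which is bounded from $\mx_1(\ps\hat\otimes_S\ps^{op})$ to $\ps^{(S)}_1$ by Lemma \ref{Jstar_bounded} and the discussion following it (its hypotheses hold since the conjugate variables of the free semicircular system are the $X_n$ themselves), and then the $\#$-action of $\js f\in\mx_1(\ps\hat\otimes_S\ps^{op})$ on $\ps^{(S)}_1$. Passing these bounded maps through the sums and shifting the index in the second one, the right-hand side of the asserted identity becomes
\[
\sum_{k\geq 1}(-1)^{k+1}\Bigl[(\js f)\#\js^{*}\bigl((\js f)^{k}\bigr)-\js^{*}\bigl((\js f)^{k+1}\bigr)\Bigr].
\]
Applying Lemma \ref{the_trick_lemma} with $m=k-1\geq-1$ to each summand rewrites this as $\sum_{k\geq 1}\frac{(-1)^{k+1}}{k+1}\,\ds\circ(1\otimes\tau+\tau\otimes1)\circ\Tr\bigl[(\js f)^{k+1}\bigr]$. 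Pulling the bounded operator $\ds\circ(1\otimes\tau+\tau\otimes1)\circ\Tr$ back out of the sum and reindexing, the right-hand side equals $\ds\circ(1\otimes\tau+\tau\otimes1)\circ\Tr\bigl[\sum_{n\geq 2}\tfrac{(-1)^{n}}{n}(\js f)^{n}\bigr]=\ds\circ(1\otimes\tau+\tau\otimes1)\circ\Tr[\js f-\log(1+\js f)]=\ds Q(g)$, which is the claim.

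The only points requiring attention are the repeated interchanges of the infinite sums with $\js^{*}$, with the $\#$-action of $\js f$, with $\Tr\colon\mx_1(\ps\hat\otimes_S\ps^{op})\to\ps\hat\otimes_S\ps^{op}$, with $1\otimes\tau+\tau\otimes1\colon\ps\hat\otimes_S\ps^{op}\to\ps^{(S)}$ (bounded since $\tau$ is a state and $\|\cdot\|_S$ dominates the operator norm), and finally with $\ds\colon\ps^{(S)}\to\ps^{(S')}_1$ (bounded by Lemma \ref{cyclic_derivative_bounded}); each interchange is legitimate because every one of these maps is bounded between the relevant Banach spaces by the estimates assembled in Section \ref{Notation}, so every step above is a term-by-term identity between norm-convergent series. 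The resulting identity then holds in $\ps^{(S')}_1$, equivalently as a sequence of formal power series. I do not expect a genuine obstacle here: the one substantive input — that $(\js f)\#\js^{*}((\js f)^{k})-\js^{*}((\js f)^{k+1})$ is an exact cyclic gradient — is precisely the content of Lemma \ref{the_trick_lemma}, which is already established; the rest is bookkeeping of signs, indices, and convergence.
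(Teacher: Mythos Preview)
Your argument is correct and follows exactly the route the paper takes: the paper's proof is the single sentence ``The convergent Taylor series expansions of each side agree by Lemma \ref{the_trick_lemma},'' and you have simply spelled out those expansions and the bookkeeping that justifies interchanging the sums with the bounded maps $\js^{*}$, $\#$, $\Tr$, $1\otimes\tau+\tau\otimes1$, and $\ds$. Your observation that in this section $X$ is a free semicircular system (so the hypotheses of Lemma \ref{Jstar_bounded} are met with $\xi_n=X_n$) is also on point.
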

\begin{proof}
The convergent Taylor series expansions of each side agree by Lemma \ref{the_trick_lemma}.
\end{proof}

\begin{lemma}\label{K_as_cyclic_derivative}
Let $R>2$. For $g\in \ps^{(R)}$, write $f=\ds g$. If
	\[
		K(f):= - \js^*(\js f) - f,
	\]
then
	\[
		K(f) = \ds\left\{ (1\otimes \tau + \tau\otimes 1)\circ\Tr\left[ \js \ds g\right] - \ns g\right\}.
	\]
\end{lemma}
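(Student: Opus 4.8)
The plan is to compute the cyclic gradient $\ds$ of the right-hand side term by term and match it against $K(f) = -\js^*(\js f) - f$. Recall from Subsection \ref{Free difference quotients, conjugate variables, and cyclic derivatives} that $\ds_n = m \circ \diamond \circ \partial_n$ and from Lemma \ref{the_trick_lemma} (the $m = -1$ case) that
\[
(\js f)\#\js^*(1) - \js^*(\js f) = \ds \circ (1\otimes\tau + \tau\otimes 1)\circ \Tr[\js f],
\]
where $1$ here denotes the identity of $\mx_\infty(\ps\hat\otimes_R\ps^{op})$. So applying Lemma \ref{the_trick_lemma} with $m = -1$ already produces $-\js^*(\js f)$ up to the correction term $(\js f)\#\js^*(1)$, which I need to identify. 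Since $\js^*(1) = X$ (noted right after equation \eqref{vector_SD}), this correction is $(\js f)\# X$, i.e.\ its $j$-th entry is $\sum_i \bar\partial_i(f_i)\# X_j = \sum_i \bar\partial_i(f_i)\# X_j$; I would use the identity $\eta \# X_j = m(\eta)$-type manipulations, or more precisely observe that the $j$-th coordinate is $\sum_i \partial_i(\ds_i g)\# X_j$ and relate this to $\ds_j(\ns g)$ via the standard computation.

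First I would recall the identity $\sum_i \partial_i(P)\# X_i$-type formulas: for a monomial $p$ of degree $d$ one has $\sum_n \ds_n(p) \cdot$ (something) recovering $\ns p = d\, p$; concretely the key algebraic fact is that $\ds(\ns g) = \ns(\ds g) + \ds g$ does \emph{not} hold verbatim, but rather the relevant statement is the entrywise identity proved in \cite[Lemma 3.5]{GS14}, namely $[(\js f)\# X]_j = \ds_j(\ns g) - f_j$, or equivalently that $\ds(\ns g) = (\js f)\# X + f$ where $f = \ds g$. Granting this (which follows by checking on monomials $g = X_{i_1}\cdots X_{i_k}$, where both sides are explicit sums over the positions of the variables), I combine it with Lemma \ref{the_trick_lemma} at $m=-1$:
\[
-\js^*(\js f) = \ds\left[(1\otimes\tau + \tau\otimes 1)\circ\Tr[\js f]\right] - (\js f)\# X = \ds\left[(1\otimes\tau+\tau\otimes 1)\circ\Tr[\js f]\right] - \ds(\ns g) + f.
\]
Rearranging gives $K(f) = -\js^*(\js f) - f = \ds\left[(1\otimes\tau+\tau\otimes 1)\circ\Tr[\js\ds g] - \ns g\right]$, which is exactly the claim.

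For rigor I would first fix $S \in (2,R)$ and check that every object lives where it should: $\js f = \js\ds g \in \mx_1(\ps\hat\otimes_S\ps^{op})$ by Lemma \ref{JD_bounded}, so $\Tr[\js f] \in \ps\hat\otimes_S\ps^{op}$, hence $(1\otimes\tau+\tau\otimes 1)\circ\Tr[\js f] \in \ps^{(S)}$ since $\tau$ is a state and $\|\cdot\|_S$ dominates the operator norm; then $\ds$ applied to it lands in $\ps^{(S)}_1$ by Lemma \ref{cyclic_derivative_bounded}, and likewise $\js^*(\js f) \in \ps^{(S)}_1$ by the comments following Lemma \ref{Jstar_bounded} and $f = \ds g \in \ps^{(S)}_1$. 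With all terms in $\ps^{(S)}_1$, equality can be tested coordinatewise against finitely supported sequences in $\ps^{(S)}_\infty$ using faithfulness of $\tau$, exactly as in the proof of Lemma \ref{the_trick_lemma}, so it suffices to verify the identity on monomials $g$ — where it reduces to the bookkeeping already carried out in \cite[Lemma 3.5]{GS14}. The main obstacle, such as it is, is pinning down the correction term $(\js f)\#\js^*(1) = (\js f)\# X$ and recognizing it as $\ds(\ns g) - \ds g$; once the semicircular normalization $\js^*(1) = X$ is invoked this is a purely combinatorial identity on monomials, and the analytic subtleties are entirely handled by the norm bounds assembled in Section \ref{Notation}.
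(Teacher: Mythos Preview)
Your proposal is correct and follows essentially the same route as the paper: invoke Lemma~\ref{the_trick_lemma} at $m=-1$, use $\js^*(1)=X$, and then identify the correction term $(\js f)\# X$. The only difference is cosmetic: the paper recognizes directly that $[(\js f)\# X]_j=\sum_i \partial_i(f_j)\#X_i=\ns f_j$ (an Euler-type identity on monomials) and then notes $\ns f=\ds\ns g-f$ since $\ds$ drops degree by one, whereas you package the same fact as $\ds(\ns g)=(\js f)\# X+f$ via \cite[Lemma~3.5]{GS14}. One small slip: your displayed ``$j$-th entry $\sum_i\bar\partial_i(f_i)\#X_j$'' has the indices scrambled; it should read $\sum_i\bar\partial_i(f_j)\#X_i$.
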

\begin{proof}
For $m=-1$, (\ref{the_trick}) yields
	\begin{align*}
		\ds\circ(1\otimes \tau+\tau\otimes 1)\circ\Tr\left[ \js\ds g\right] &= (\js f)\#\js^*(1) - \js^*(\js f)\\
			&= (\js f)\# X - \js^*(\js f)\\
			&= \ns f - \js^*(\js f),
	\end{align*}
where $\ns$ in $\ns f$ is applied entrywise. Since $\ds$ reduces the degree by one, we have $\ns f = \ds(\ns - 1)g = \ds \ns g - f$, and the claim follows.
\end{proof}

\begin{lemma}\label{last_SD_lemma}
Let $R>S>2$. For $g=g^*\in \ps^{(R)}$, write $f\in \ds g$. Assume $\|\js f\|_{S,1,1}<1$ and let
	\[
		Q(g)=(1\otimes\tau + \tau\otimes 1)\circ \Tr\left[ \js\ds g - \log(1+ \js\ds g)\right].
	\]
Then (\ref{K_SD}) is equivalent to
	\begin{align*}
		\ds\left\{ (1\otimes \tau + \tau\otimes 1)\circ\Tr\left[\js\ds g\right] - \ns g\right\} = \ds(W(X+\ds g)) + \ds Q(g) + (\js\ds g)\#\ds g.
	\end{align*}
In particular, if $g$ satisfies
	\begin{align}\label{sufficient_SD}
		\ns g = -W(X+\ds g) - \frac{1}{2}\ds g\#\ds g + (1\otimes \tau+\tau\otimes 1)\circ\Tr\left[\log(1+\js\ds g)\right],
	\end{align}
then the joint law of $Y:=X+\ds g$ is a free Gibbs state with perturbation $W$.
\end{lemma}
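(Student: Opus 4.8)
The plan is to treat the first assertion as bookkeeping --- a substitution of three earlier lemmas into \eqref{K_SD} --- and the second as an application of the cyclic gradient $\ds$ to \eqref{sufficient_SD}, followed by the chain of equivalences already assembled in this subsection. Throughout put $f=\ds g$. I would first record the routine point that the standing hypothesis $\|\js f\|_{S,1,1}<1$ gives $\|\js f\|_{S,\infty,1}\le\|\js f\|_{S,1,1}<1$, so $\js Y=1+\js f$ is invertible in the Banach algebra $\mx_\infty(\ps\hat\otimes_S\ps^{op})$ via its Neumann series, while $(\js f)^*=\js f$ by Lemma \ref{involutions_of_J} and $g=g^*$, whence $(\js Y)^*=\js Y$. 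Thus the hypotheses of the corollary and lemmas of this subsection identifying \eqref{vector_SD}, \eqref{vector_SD_in_X} and \eqref{K_SD} are all in force.

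For the first claim I would start from \eqref{K_SD}: Lemma \ref{K_as_cyclic_derivative} rewrites its left-hand side $K(f)$ as $\ds\{(1\otimes\tau+\tau\otimes1)\circ\Tr[\js\ds g]-\ns g\}$, and Lemma \ref{Q_Taylor_series} (whose smallness hypothesis is exactly $\|\js f\|_{S,1,1}<1$) rewrites the last two terms on its right-hand side as $\ds Q(g)$. Collecting the remaining terms produces precisely the displayed identity, and since each manipulation is an equality in $\ps^{(S)}_1$ the two equations are equivalent.

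For the second claim, assume $g$ satisfies \eqref{sufficient_SD} and apply $\ds$ to both sides (legitimate by Lemma \ref{cyclic_derivative_bounded}; one reads \eqref{sufficient_SD} as an identity in a suitable $\ps^{(S')}$ with $S'\in(S,R)$, which is ensured in the subsequent construction). The term $\ds\ns g$ moves to the left; on the right, $\ds$ of the trace-log term is $\ds[(1\otimes\tau+\tau\otimes1)\circ\Tr(\js\ds g)]-\ds Q(g)$ by the very definition of $Q(g)$, and the only genuinely new input is the quadratic identity
\[
	\ds(\ds g\#\ds g)=2\,(\js\ds g)\#\ds g ,
\]
which I would check by the same direct monomial computation used in \cite{GS14}, reducing to finitely many generators via Remark \ref{polynomials_supported_finitely}. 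Rearranging, the image of \eqref{sufficient_SD} under $\ds$ is exactly the first displayed identity; by the first part of the lemma and the equivalences \eqref{vector_SD}$\Leftrightarrow$\eqref{vector_SD_in_X}$\Leftrightarrow$\eqref{K_SD} proved earlier, this is equivalent to \eqref{vector_SD}, i.e.\ to $\js_Y^*(1)=Y+\ds_Y W(Y)$; and by the discussion opening Section \ref{Construction} the latter is \eqref{SD_in_Y}, the Schwinger--Dyson equation asserting that the joint law of $Y=X+\ds g$ is a free Gibbs state with perturbation $W$ (which in particular forces $Y$ to have conjugate variables, so the assertion is meaningful).

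The main obstacle is purely the bookkeeping of radii: tracking which of $R>T>S$ each quantity lives at (e.g.\ $\Tr[(\js f)^{m}]\in\ps\hat\otimes_T\ps^{op}$ for $T\in(S,R)$, the operators $\ds$ and $\js^*$ lowering or preserving the radius, the Neumann and trace-log series converging in $\mx_1(\ps\hat\otimes_S\ps^{op})$), and ensuring \eqref{sufficient_SD} is read at a radius for which $\ns g$ and $W(X+\ds g)$ are defined. All of this, however, has already been absorbed into the estimates of Section \ref{Notation} and into Lemmas \ref{the_trick_lemma}, \ref{Q_Taylor_series} and \ref{K_as_cyclic_derivative}, so what remains in this proof is essentially mechanical.
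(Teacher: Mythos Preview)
Your proposal is correct and follows essentially the same route as the paper: invoke Lemma \ref{K_as_cyclic_derivative} for the left-hand side and Lemma \ref{Q_Taylor_series} for the last two terms of \eqref{K_SD} to obtain the first display, then apply $\ds$ to \eqref{sufficient_SD} and reduce to the quadratic identity $\ds\!\left[\tfrac12\ds g\#\ds g\right]=(\js\ds g)\#\ds g$. The only cosmetic difference is that the paper verifies this quadratic identity in place via $\ds_n=m\circ\diamond\circ\partial_n$, the Leibniz rule, and Lemma \ref{involutions_of_J} (to turn $[\js\ds g](i,n)^\diamond$ into $[\js\ds g](n,i)$), rather than deferring to \cite{GS14}.
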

\begin{proof}
The equivalence with (\ref{K_SD}) follows from Lemma \ref{K_as_cyclic_derivative} (for the left-hand side) and Lemma \ref{Q_Taylor_series} (for the right-hand side).

For the last statement in the lemma, we claim that cyclic gradient of (\ref{sufficient_SD}) is equivalent to the previous equation, in which case the joint law of $Y=X+\ds g$ will satisfy (\ref{SD_in_Y}). Indeed, it suffices to check
	\[
		\ds\left[\frac{1}{2}\ds g\# \ds g\right] = (\js \ds g)\# \ds g.
	\]
Recall $\ds_n = m\circ\diamond\circ\partial_n$ and that $\partial_n$ satisfies the Leibniz rule. Hence, we compute
	\begin{align*}
		\ds_n\left[\frac{1}{2}\sum_{i\in \nz} \ds_i(g) \ds_i(g)\right] &= \frac{1}{2}\sum_{i\in \nz} m\circ\diamond\left( \partial_n(\ds_i(g)) \cdot \ds_i(g) + \ds_i(g)\cdot \partial_n(\ds_i(g))\right)\\
			&= \frac{1}{2} \sum_{i\in \nz} m\left( [\js \ds g](i,n)^\diamond \# (\ds_i (g) \otimes 1)+ (1\otimes \ds_i(g))\# [\js\ds g](i,n)^\diamond\right)\\
			&=\sum_{i\in \nz}  [\js \ds g](i,n)^\diamond \# \ds_i (g).
	\end{align*}
Using Lemma \ref{involutions_of_J} to observe $[\js\ds g](i,n)^\diamond=[(\js\ds g)^\diamond](n,i)=[\js\ds g](n,i)$ concludes the claim and the proof.
\end{proof}


\subsection{The technical estimates}

If $g\in \ps^{(R)}$ satisfies (\ref{sufficient_SD}), then $\hat g:= \ns g$ satisfies:
	\[
		\hat g = - W(X+\ds \Sigma \hat g) - \frac{1}{2} \ds \Sigma \hat g \# \ds\Sigma\hat g + (1\otimes \tau +\tau\otimes 1)\circ \Tr\left[\log(1+\js\ds\Sigma \hat g)\right]=: F(\hat g).
	\]
Thus we must show a self-adjoint fixed point of $F$ exists, provided $W$ is sufficiently small, and towards this end we show $F$ is locally Lipschitz in a series of three lemmas (one for each component of $F(\hat g)$ above). The corollary that $F$ is locally Lipschitz is the analogue of \cite[Corollary 3.12]{GS14}; moreover, the first and second lemmas are analogues of estimates appearing in the proof of \cite[Corollary 3.12]{GS14} with slightly sharper bounds.

\begin{lemma}\label{W_Lipschitz}
Let $R>S>2$, and assume $W\in \ps^{(R+\epsilon)}$ for some $\epsilon>0$. If $g,h\in \ps^{(S)}$ satisfy $\|g\|_S,\|h\|_S\leq S\epsilon$, then
	\[
		\| W(X+\ds\Sigma g) - W(X+\ds\Sigma h)\|_S \leq \frac{\mathscr{C}(R+\epsilon,S+\epsilon)\|W\|_{R+\epsilon}}{S}\|g-h\|_S.
	\]
\end{lemma}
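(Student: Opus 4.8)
The plan is to apply the mean-value estimate of Lemma \ref{mean_value_estimate} with the outer radius $R+\epsilon$ and the inner radius $S$, after first verifying that the two substitution points $Y^{(1)} = X + \ds\Sigma g$ and $Y^{(2)} = X+\ds\Sigma h$ lie in $\ps^{(S)}_\infty$ with $\|Y^{(j)}\|_{S,\infty} \le R$, which is exactly the hypothesis required by that lemma. First I would bound $\|\ds\Sigma g\|_{S,\infty} \le \|\ds\Sigma g\|_{S,1}$ using Lemma \ref{cyclic_derivative_bounded}, which gives $\|\ds\Sigma\colon \ps^{(S)}\to\ps^{(S)}_1\| \le \tfrac1S$, so that $\|\ds\Sigma g\|_{S,1} \le \tfrac1S\|g\|_S \le \epsilon$ by the assumption $\|g\|_S \le S\epsilon$. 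Since $\sup_n\|X_n\|\le 2 < S$, we get $\|X + \ds\Sigma g\|_{S,\infty} \le S + \epsilon$. The mild point to check is that $S+\epsilon \le R+\epsilon$ (true since $S < R$) so that $Y^{(j)}$ is an admissible substitution point for a power series in $\ps^{(R+\epsilon)}$; in fact one only needs $\|Y^{(j)}\|_{S,\infty}$ bounded by the outer radius.

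Next I would compute directly, using the first inequality of Lemma \ref{mean_value_estimate} (with outer radius $R+\epsilon$, inner radius $S$, and applied to the two sequences $Y^{(1)}, Y^{(2)}$ above):
\[
	\|W(X+\ds\Sigma g) - W(X+\ds\Sigma h)\|_S \le \mathscr{C}(R+\epsilon, R)\,\|W\|_{R+\epsilon}\,\|\ds\Sigma g - \ds\Sigma h\|_S.
\]
Wait — this would give the constant $\mathscr{C}(R+\epsilon,R)$, but the statement wants $\mathscr{C}(R+\epsilon, S+\epsilon)$; the point is that one should instead apply the finer first bound of Lemma \ref{mean_value_estimate} with the radius $S+\epsilon$ playing the role of its ``$R$'': that is, view $W \in \ps^{((S+\epsilon)+\delta)}$ is not quite right either, so the cleanest route is to apply Lemma \ref{mean_value_estimate} with its outer parameter being $R+\epsilon$ and its middle parameter being $S+\epsilon$ (legitimate since $\|Y^{(j)}\|_{S,\infty}\le S+\epsilon$), yielding bound $\mathscr{C}(R+\epsilon, S+\epsilon)\|W\|_{R+\epsilon}$ on the Lipschitz constant of $W(\cdot)$ in the $\|\cdot\|_S$ norm with respect to perturbations measured in $\|\cdot\|_S$. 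Then I finish with $\|\ds\Sigma g - \ds\Sigma h\|_S \le \|\ds\Sigma(g-h)\|_{S,1} \le \tfrac1S\|g-h\|_S$, again by Lemma \ref{cyclic_derivative_bounded}, which combines to give exactly the claimed estimate $\tfrac{\mathscr{C}(R+\epsilon, S+\epsilon)\|W\|_{R+\epsilon}}{S}\|g-h\|_S$.

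The only genuine subtlety — and the step I expect to require the most care — is bookkeeping the three radii in the invocation of Lemma \ref{mean_value_estimate} and confirming the substitution points genuinely satisfy its hypotheses with the intermediate radius $S+\epsilon$ rather than a larger one; everything else is a direct chaining of the operator-norm bounds already established for $\ds\Sigma$ and for power-series substitution. No fixed-point or closability input is needed here. In brief: bound $\|\ds\Sigma g\|_{S,\infty}\le\epsilon$, deduce $\|X+\ds\Sigma g\|_{S,\infty}\le S+\epsilon$, apply Lemma \ref{mean_value_estimate}, and contract once more with Lemma \ref{cyclic_derivative_bounded}.
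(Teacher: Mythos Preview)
Your proposal is correct and follows essentially the same route as the paper: bound $\|X+\ds\Sigma g\|_{S,\infty}\le S+\epsilon$ via Lemma \ref{cyclic_derivative_bounded}, apply Lemma \ref{mean_value_estimate} with outer radius $R+\epsilon$, intermediate radius $S+\epsilon$, and inner radius $S$, then contract the remaining factor $\|\ds\Sigma(g-h)\|_{S,\infty}$ by $\tfrac1S\|g-h\|_S$. One cosmetic remark: the bound $\|X\|_{S,\infty}\le S$ comes from $\|X_n\|_S=S$ as a power series, not from the operator-norm fact $\sup_n\|X_n\|\le 2$; the latter is irrelevant to this step.
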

\begin{proof}
The condition on $g$ implies, along with Lemma \ref{cyclic_derivative_bounded}, that
	\[
		\|X+\ds\Sigma g\|_{S,\infty} \leq S+ \frac{1}{S}\|g\|_S\leq S+\epsilon,
	\]
similarly for $h$. Thus Lemmas \ref{cyclic_derivative_bounded} and \ref{mean_value_estimate} imply
	\begin{align*}
		\|W(X+\ds \Sigma g) - W(X+\ds\Sigma h)\|_S &\leq \mathscr{C}(R+\epsilon, S+\epsilon)\|W\|_{R+\epsilon}\|\ds\Sigma[g-h]\|_S\\
			&\leq \frac{\mathscr{C}(R+\epsilon,S+\epsilon)\|W\|_{R+\epsilon}}{S} \|g-h\|_S.\qedhere
	\end{align*}
\end{proof}

\begin{lemma}\label{Dot_product_Lipschitz}
Let $R>2$. Then for $g,h\in \ps^{(R)}$ we have
	\[
		\left\| \frac{1}{2} (\ds \Sigma g)\# (\ds\Sigma g) - \frac{1}{2} (\ds \Sigma h)\# (\ds \Sigma h)\right\|_R \leq \frac{\|g\|_R+\|h\|_R}{2R^2}\|g-h\|_R
	\]
\end{lemma}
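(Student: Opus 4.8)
The plan is to write the difference as a ``telescoping'' sum in order to exploit bilinearity of the $\#$-product, then bound each term using the contractivity of $\ds\Sigma$ established in Lemma~\ref{cyclic_derivative_bounded}. Concretely, I would first record the identity
\[
	\tfrac12(\ds\Sigma g)\#(\ds\Sigma g) - \tfrac12(\ds\Sigma h)\#(\ds\Sigma h)
	= \tfrac12(\ds\Sigma(g-h))\#(\ds\Sigma g) + \tfrac12(\ds\Sigma h)\#(\ds\Sigma(g-h)),
\]
which is valid since $\ds\Sigma$ is linear and $(\ds\Sigma g - \ds\Sigma h) = \ds\Sigma(g-h)$. (One can also write it more symmetrically with $\tfrac12\ds\Sigma(g+h)$, but the above split is enough.)

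Next I would estimate each of the two terms. Recall from Subsection~\ref{temp_formalism}/Subsection~\ref{Notation} that for $F\in\ps^{(R)}_\infty$ and $G\in\ps^{(R)}_1$ one has $\|F\# G\|_R \le \|F\|_{R,\infty}\|G\|_{R,1}$ (this follows from the triangle inequality and submultiplicativity of $\|\cdot\|_R$). Applying this with the first factor measured in the $\|\cdot\|_{R,\infty}$-norm and the second in the $\|\cdot\|_{R,1}$-norm, and then invoking the bounds $\|\ds\Sigma\colon \ps^{(R)}\to\ps^{(R)}_1\|\le \tfrac1R$ and a fortiori $\|\ds\Sigma\colon\ps^{(R)}\to\ps^{(R)}_\infty\|\le\tfrac1R$ from Lemma~\ref{cyclic_derivative_bounded}, each of the two terms is bounded by $\tfrac12\cdot\tfrac1R\|g-h\|_R\cdot\tfrac1R\|g\|_R$ and $\tfrac12\cdot\tfrac1R\|h\|_R\cdot\tfrac1R\|g-h\|_R$ respectively. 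Summing gives exactly $\frac{\|g\|_R+\|h\|_R}{2R^2}\|g-h\|_R$.

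There is essentially no obstacle here: the only mild point of care is making sure the $\#$-product of two elements of $\ps^{(R)}_1$ (or $\ps^{(R)}_\infty$ with $\ps^{(R)}_1$) is a well-defined element of $\ps^{(R)}$ with the stated norm bound — but this is precisely the content of the displayed definitions of $F\#G$ and $G\#F$ in Subsection~\ref{Notation}, so it can simply be cited. Thus the proof is just the two-line telescoping identity followed by two applications of Lemma~\ref{cyclic_derivative_bounded}, and the whole argument fits in a few lines.
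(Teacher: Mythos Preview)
Your proposal is correct and essentially identical to the paper's proof: the paper uses the same telescoping split (with the minor cosmetic difference of writing it as $\tfrac12(\ds\Sigma g)\#(\ds\Sigma[g-h])+\tfrac12(\ds\Sigma[g-h])\#(\ds\Sigma h)$) and then bounds each term via $\|(\ds\Sigma g)\#(\ds\Sigma h)\|_R\le \|\ds\Sigma g\|_{R,1}\|\ds\Sigma h\|_{R,\infty}\le \tfrac1{R^2}\|g\|_R\|h\|_R$ from Lemma~\ref{cyclic_derivative_bounded}.
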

\begin{proof}
We first note that by Lemma \ref{cyclic_derivative_bounded}
	\begin{align*}
		\| (\ds\Sigma g)\# (\ds\Sigma h)\|_R &\leq \sum_{n\in \nz} \| \ds_n \Sigma g \|_R \|\ds_n \Sigma h\|_R\\
			&\leq \|\ds \Sigma g\|_{R,1} \|\ds\Sigma h\|_{R,\infty}\\
			&\leq \frac{1}{R}\|g\|_R \frac{1}{R}\|h\|_R.
	\end{align*}
Applying this to the final two terms in the inequality
	\begin{align*}
		\left\| \frac{1}{2} (\ds \Sigma g)\# (\ds\Sigma g) - \frac{1}{2} (\ds \Sigma h)\# (\ds \Sigma h)\right\|_R&\\
			\leq \frac{1}{2} \| ( \ds \Sigma g)\# (\ds \Sigma [g-h])\|_R& + \frac{1}{2} \| (\ds\Sigma [g-h])\# (\ds\Sigma h)\|_R
	\end{align*}
yields the desired inequality.
\end{proof}

\begin{lemma}\label{O_Lipschitz}
Let $R>4$. For $g\in \ps^{(R)}$ satisfying $\|g\|_R< \frac{R^2}{2}$, define
	\[
		L(\Sigma g) = (1\otimes \tau+\tau\otimes 1)\circ\Tr\left[\sum_{m\geq 1} \frac{(-1)^{m+1}}{m} (\js\ds \Sigma g)^m\right].
	\]
Then the series converges in $\|\cdot\|_R$, and when $\|\js\ds\Sigma g\|_{S,1,1}<1$ for some $S\in (4,R)$ we have
	\[
		L(\Sigma g) = (1\otimes\tau+\tau\otimes 1)\circ\Tr[\log(1+\js\ds\Sigma g)].
	\]
Moveover, $L$ satisfies the local Lipschitz condition
	\[
		\|L(\Sigma g) - L(\Sigma h)\|_R \leq \| g- h\|_R \frac{2}{R^2}\left( \frac{1}{\left(1-\frac{2\|g\|_R}{R^2}\right)\left(1-\frac{2\|h\|_R}{R^2}\right)} +1\right),
	\]
and the bound
	\[
		\|L(\Sigma g)\|_R \leq \frac{2\|g\|_R}{R^2}\frac{ 2-\frac{2\|g\|_R}{R^2}}{1- \frac{2\|g\|_R}{R^2}}.
	\]
for $\|g\|_R,\|h\|_R<\frac{R^2}{2}$.
\end{lemma}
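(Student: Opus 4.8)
The plan is to treat $L(\Sigma g)$ as a norm-convergent power series in $\js\ds\Sigma g$ and to reduce everything to the operator bounds established in Section \ref{Notation}. First I would record the basic estimate: by Lemma \ref{cyclic_derivative_bounded} (with some intermediate radius $T\in(4,R)$) followed by the bound on $\js$ into $\mx_1$, the composition $\js\ds\Sigma$ maps $\ps^{(R)}$ boundedly into $\mx_1(\ps\hat\otimes_S\ps^{op})$; tracking the constants through $\mathscr{C}(R,T)\cdot\frac{1}{T}$ and optimizing, one gets $\|\js\ds\Sigma g\|_{S,1,1}\le \frac{2}{R^2}\|g\|_R$ (this is exactly the constant that makes the stated $\frac{R^2}{2}$ threshold and the factors $\frac{2\|g\|_R}{R^2}$ appear). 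Since $\mx_1(\ps\hat\otimes_S\ps^{op})$ is a Banach algebra and $\Tr$ is bounded from it into $\ps\hat\otimes_S\ps^{op}$, with $(1\otimes\tau+\tau\otimes 1)$ bounded from there into $\ps^{(S)}\subset\ps^{(R)}$ (here using that $\|\cdot\|_R$ dominates the operator norm so that applying $\tau$ is contractive in the relevant sense — compare the argument in Lemma \ref{the_trick_lemma}), the series $\sum_{m\ge 1}\frac{(-1)^{m+1}}{m}(\js\ds\Sigma g)^m$ converges in $\|\cdot\|_R$ whenever $\|g\|_R<\frac{R^2}{2}$, and it equals $\Tr[\log(1+\js\ds\Sigma g)]$ by the usual identification of the logarithm series in a Banach algebra once $\|\js\ds\Sigma g\|_{S,1,1}<1$. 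This handles the first two assertions.

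For the Lipschitz estimate I would expand the difference telescopically:
\[
	(\js\ds\Sigma g)^m - (\js\ds\Sigma h)^m = \sum_{k=0}^{m-1} (\js\ds\Sigma g)^k \#\big(\js\ds\Sigma(g-h)\big)\#(\js\ds\Sigma h)^{m-1-k},
\]
so that, writing $a=\frac{2\|g\|_R}{R^2}$, $b=\frac{2\|h\|_R}{R^2}$, and $c=\|g-h\|_R$, the $\mx_1$-norm of the $m$-th term is at most $m\cdot a^{\bullet}b^{\bullet}\cdot\frac{2c}{R^2}$ summed appropriately, giving
\[
	\Big\| \sum_{m\ge 1}\tfrac{(-1)^{m+1}}{m}\big((\js\ds\Sigma g)^m-(\js\ds\Sigma h)^m\big)\Big\|_{\text{(after }\Tr,\ 1\otimes\tau+\tau\otimes1)} \le \frac{2c}{R^2}\sum_{m\ge 1}\sum_{k=0}^{m-1} a^k b^{m-1-k}.
\]
The inner double sum is $\sum_{m\ge1}\frac{a^m-b^m}{a-b}=\frac{1}{(1-a)(1-b)}$ when $a\ne b$ (and the limiting value $\frac{1}{(1-a)^2}$ when $a=b$), which is $\le\frac{1}{(1-a)(1-b)}+1$ after a harmless relaxation — this is where the ``$+1$'' in the statement comes from, presumably inserted to get a clean uniform bound. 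Pushing this through the bounded maps $\Tr$ and $(1\otimes\tau+\tau\otimes1)$, whose composite contributes no extra constant beyond what is already absorbed into the $\frac{2}{R^2}$ (again because $\tau$ is a state and $\|\cdot\|_R$ dominates the operator norm), yields exactly the claimed inequality. The bound on $\|L(\Sigma g)\|_R$ itself follows the same way, bounding $\sum_{m\ge1}\frac{1}{m}\|\js\ds\Sigma g\|^m_{S,1,1}\le\sum_{m\ge1}\frac{a^m}{m}$; but to match the stated algebraic form $a\cdot\frac{2-a}{1-a}$ I would instead use the coarser termwise estimate $\frac{1}{m}\le 1$ is too lossy, so more carefully bound $\sum_{m\ge1}a^m\cdot\frac{1}{m}$ against $a + a^2\sum_{m\ge 2}a^{m-2} = a+\frac{a^2}{1-a} = a\cdot\frac{1-a+a}{1-a}$... which gives $\frac{a}{1-a}$; the displayed $a\cdot\frac{2-a}{1-a}$ is obtained by instead grouping the $m=1$ term separately and bounding $\sum_{m\ge1}\frac{a^m}{m}\le a+\sum_{m\ge 2}a^m = a + \frac{a^2}{1-a}$ is still $\frac{a}{1-a}$; the slightly larger constant in the paper comes from not using the $\frac1m$ at all on part of the sum, i.e. $\sum_{m\ge1}a^m/m \le a\cdot 1 + \sum_{m\ge1}a^m \cdot\frac{\text{correction}}{\,}$ — in any case the stated bound is weaker than the sharp one, so it certainly holds.

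The main obstacle is purely bookkeeping: making sure the constant $\frac{2}{R^2}$ genuinely emerges from the chain $\ps^{(R)}\xrightarrow{\Sigma}\ps^{(R)}$, $\ds\colon\ps^{(R)}\to\ps^{(T)}_1$, $\js\colon\ps^{(T)}_1\to\mx_1(\ps\hat\otimes_S\ps^{op})$ after optimizing the intermediate radii $T,S$, and that the maps $\Tr$ and $1\otimes\tau+\tau\otimes1$ contribute exactly a factor $1$ rather than something radius-dependent — this is the reason Lemma \ref{the_trick_lemma}'s proof emphasizes that $\|\cdot\|_T$ dominates the operator norm for $T>S>2$. Once those constants are pinned down, the convergence, the identification with $\log$, and both inequalities are immediate consequences of the geometric-series manipulations above; I would state the $\mx_1$-bound $\|\js\ds\Sigma g\|_{S,1,1}\le\frac{2\|g\|_R}{R^2}$ as the single computational input and then run the three conclusions off of it in sequence.
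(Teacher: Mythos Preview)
There is a real gap in your approach, and it is exactly the ``bookkeeping'' point you flag at the end. The chain
\[
\ps^{(R)} \xrightarrow{\ \ds\Sigma\ } \ps^{(R)}_1 \xrightarrow{\ \js\ } \mx_1(\ps\hat\otimes_S\ps^{op})
\]
unavoidably drops the radius: by Lemma~\ref{free_difference_quotients_bounded} the second arrow only lands in $\mx_1(\ps\hat\otimes_S\ps^{op})$ for some $S<R$. Taking powers in that Banach algebra, applying $\Tr$, and then $(1\otimes\tau+\tau\otimes 1)$ produces an element of $\ps^{(S)}$, not $\ps^{(R)}$. Your line ``bounded from there into $\ps^{(S)}\subset\ps^{(R)}$'' has the inclusion reversed: for $S<R$ one has $\ps^{(R)}\subset\ps^{(S)}$, and an $\|\cdot\|_S$-bound does not control $\|\cdot\|_R$. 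So the argument as written only proves the Lipschitz estimate and the norm bound with $\|\cdot\|_S$ on the left-hand side, which is strictly weaker than the lemma. Relatedly, the optimized constant you get from $\mathscr{C}(R,S)/R$ depends on $S$ and does not give $\tfrac{2}{R^2}$ in the regime $4<R<4e$ where $S=R/e$ is not admissible.

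The paper does \emph{not} route through the $\mx_1$ Banach algebra for these estimates. Instead it writes $L(\Sigma g)=(1\otimes\tau+\tau\otimes 1)\circ\Tr[\js\ds\Sigma g]-Q(\Sigma g)$ and imports the $\|\cdot\|_R$-bounds for $Q$ directly from \cite[Lemmas~3.8--3.11]{GS14}, which work at the level of monomials and exploit the moment bound $|\tau(X_{i_1}\cdots X_{i_d})|\le 2^d$ (their hypothesis (3.11) with $C_0=2$). That monomial-level analysis is precisely what recovers the $\|\cdot\|_R$-control that your operator-norm composition loses: applying $\tau$ to a factor of degree $d$ costs $2^d$ rather than $R^d$, and since $R>4>2$ this is a net gain. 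The linear $m=1$ term is then bounded separately via Lemmas~\ref{1_tensor_tau} and \ref{cyclic_derivative_bounded} by $\tfrac{4}{R^2}\|g\|_R$. Adding this to the $Q$-estimates is where the ``$+1$'' in the Lipschitz constant and the specific form $a\cdot\tfrac{2-a}{1-a}=\tfrac{a^2}{1-a}+2a$ of the norm bound come from---they are not relaxations of a sharper inequality but artifacts of handling $m=1$ by a different route than $m\ge 2$.
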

\begin{proof}
Let $g,h\in \ps^{(R)}$ satisfy $\|g\|_R, \|h\|_R<\frac{R^2}{2}$. Define
	\[
		Q(\Sigma g) = (1\otimes\tau+\tau\otimes 1)\circ\Tr[\js\ds\Sigma g]-L(\Sigma g),
	\]
(\emph{i.e.} the function considered in Lemma \ref{Q_Taylor_series}). Then we obtain the following local Lipschitz condition and bound for $Q$ from \cite[Lemma 3.8, Lemma 3.9, Corollary 3.10, and Lemma 3.11]{GS14}:
	\[
		\|Q(\Sigma g) - Q(\Sigma h)\|_R \leq \| g- h\|_R \frac{2}{R^2}\left( \frac{1}{\left(1-\frac{2\|g\|_R}{R^2}\right)\left(1-\frac{2\|h\|_R}{R^2}\right)} -1\right),
	\]
and
	\[
		\|Q(\Sigma g)\|_R \leq \frac{ (2\|g\|_R/ R^2)^2}{1- 2\|g\|_R/R^2}.
	\]
Indeed, \cite[Lemma 3.8]{GS14}, which the other cited results build off of, begins by establishing estimates on monomials and hence the same argument applies in our context. Furthermore, the recurrent hypothesis that \cite[(3.11)]{GS14} holds is satisfied in our context with $C_0=2$; that is,
	\[
		|\tau(X_{i_1}\cdots X_{i_d})|\leq 2^d,\qquad \forall \vec{i}\in \nz^d.
	\]
To obtain the claimed estimates for $L(\Sigma g)$ it then simply suffices to establish for the differing term $(1\otimes \tau + \tau \otimes 1)\circ\Tr[\js\ds\Sigma g]$ a relevant bound (since it is linear). This can be obtained by either applying \cite[Lemma 3.8]{GS14} to the case $m=1$, or using Lemma \ref{1_tensor_tau} and Lemma \ref{cyclic_derivative_bounded} to see
	\begin{align*}
		\|(1\otimes \tau + \tau \otimes 1)\circ\Tr[\js\ds\Sigma g]\|_R &\leq \frac{2}{R- 2} \sum_{n\in \nz}\|\ds_n \Sigma g\|_R\\
			&<\frac{4}{R} \frac{1}{R} \|g\|_R\\
			&= \frac{4}{R^2}\|g\|_R.
	\end{align*}
where in the second to last step we have used $R>4$.
\end{proof}

\begin{cor}\label{F_Lipschitz}
Let $R>S>4$, $\epsilon< \frac{S}{2}$, and $W\in \ps^{(R+\epsilon)}$. Then for $g,h \in \ps^{(S)}$ satisfying $\|g\|_S, \|h\|_S \leq S\epsilon$
	\[
		F(g) := - W(X+\ds \Sigma g) - \frac{1}{2} (\ds\Sigma g)\# (\ds\Sigma g) + L(\Sigma g)
	\]
defines a map into $\ps^{(S)}$ satisfying the local Lipschitz condition
	\begin{align*}
		\|F(g) - F(h)\|_S \leq \|g-h\|_S\left\{\vphantom{\left( \frac{1}{\left(1-\frac{2\|g\|_S}{S^2}\right)\left(1-\frac{2\|h\|_S}{S^2}\right)} +1\right)}\right. & \frac{\|W\|_{R+\epsilon}}{S(S+\epsilon)e\log\left(\frac{R+\epsilon}{S+\epsilon}\right)}  + \frac{\|g\|_S+\|h\|_S}{2S^2} \\
			&\qquad+\left. \frac{2}{S^2}\left( \frac{1}{\left(1-\frac{2\|g\|_S}{S^2}\right)\left(1-\frac{2\|h\|_S}{S^2}\right)} +1\right)\right\},
	\end{align*}
and the bound
	\[
		\|F(g)\|_S \leq \|g\|_S\left\{ \frac{\|W\|_{R+\epsilon}}{S(S+\epsilon)e\log\left(\frac{R+\epsilon}{S+\epsilon}\right)} + \frac{\|g\|_S}{2S^2} +\frac{2}{S^2}\left( \frac{1}{\left(1-\frac{2\|g\|_S}{S^2}\right)} +1\right)\right\} + \|W\|_S.
	\]
\end{cor}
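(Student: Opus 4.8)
The plan is to read off $F$ as the sum of the three pieces $-W(X+\ds\Sigma g)$, $-\tfrac12(\ds\Sigma g)\#(\ds\Sigma g)$, and $L(\Sigma g)$, each of which has already been analyzed — in Lemma \ref{W_Lipschitz}, Lemma \ref{Dot_product_Lipschitz}, and Lemma \ref{O_Lipschitz} respectively — and to assemble the two stated estimates by the triangle inequality. First I would record that $F$ really does map $\{g\in\ps^{(S)}\colon \|g\|_S\le S\epsilon\}$ into $\ps^{(S)}$: by Lemma \ref{cyclic_derivative_bounded} one has $\|\ds\Sigma g\|_{S,1}\le\tfrac1S\|g\|_S\le\epsilon$, hence $\|X+\ds\Sigma g\|_{S,\infty}\le S+\epsilon<R+\epsilon$, so by the substitution principle recorded just before Lemma \ref{mean_value_estimate} the series $W(X+\ds\Sigma g)$ lies in $\ps^{(S)}$ with $\|W(X+\ds\Sigma g)\|_S\le\|W\|_{R+\epsilon}$; the quadratic term lies in $\ps^{(S)}$ because $\ds\Sigma g\in\ps^{(S)}_1$ and $\#$ pairs $\ps^{(S)}_\infty$ with $\ps^{(S)}_1$; and $L(\Sigma g)\in\ps^{(S)}$ by Lemma \ref{O_Lipschitz}, whose smallness hypothesis $\|g\|_S<\tfrac{S^2}2$ is guaranteed by $\|g\|_S\le S\epsilon$ together with $\epsilon<\tfrac S2$.

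For the local Lipschitz bound I would apply Lemma \ref{W_Lipschitz} verbatim to the first summand — noting that its constant $\tfrac{\mathscr{C}(R+\epsilon,S+\epsilon)\|W\|_{R+\epsilon}}{S}$ unwinds, via the definition $\mathscr{C}(R+\epsilon,S+\epsilon)=\tfrac1{e(S+\epsilon)\log((R+\epsilon)/(S+\epsilon))}$, to $\tfrac{\|W\|_{R+\epsilon}}{S(S+\epsilon)e\log((R+\epsilon)/(S+\epsilon))}$ — then Lemma \ref{Dot_product_Lipschitz} with $R$ replaced by $S$ (legitimate since $S>4>2$) to the second summand, and Lemma \ref{O_Lipschitz} with $R$ replaced by $S$ (legitimate since $S>4$, the hypotheses $\|g\|_S,\|h\|_S<\tfrac{S^2}2$ holding as above) to the third. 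Adding the three resulting inequalities via the triangle inequality produces exactly the claimed estimate for $\|F(g)-F(h)\|_S$.

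For the norm bound I would observe that $F(0)=-W$: since $\Sigma$ annihilates constants we get $\ds\Sigma 0=0$, so $W(X+\ds\Sigma 0)=W(X)=W$, the quadratic term vanishes, and $L(\Sigma 0)=0$ because $\js\ds\Sigma 0=0$ makes the defining series of $L$ collapse. Hence $\|F(0)\|_S=\|W\|_S$, and specializing the Lipschitz estimate just proved to $h=0$ gives $\|F(g)\|_S\le\|F(g)-F(0)\|_S+\|F(0)\|_S\le\|g\|_S\{\cdots\}+\|W\|_S$, which is precisely the asserted bound (the brace being the Lipschitz coefficient at $h=0$). Equivalently, one may bound each summand of $F(g)$ separately using the $h=0$ cases of the three lemmas, invoking the explicit estimate $\|L(\Sigma g)\|_S\le\tfrac{2\|g\|_S}{S^2}\cdot\tfrac{2-2\|g\|_S/S^2}{1-2\|g\|_S/S^2}$ of Lemma \ref{O_Lipschitz}, which rearranges to the $L$-contribution in the displayed bound. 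There is no genuine obstacle here; the only thing demanding care is the bookkeeping of the three radii $S$, $S+\epsilon$, $R+\epsilon$ and the verification that $\|g\|_S,\|h\|_S\le S\epsilon$ and $\epsilon<\tfrac S2$ furnish every smallness hypothesis needed by the cited lemmas.
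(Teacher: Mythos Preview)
Your proposal is correct and follows essentially the same approach as the paper: invoke Lemmas \ref{W_Lipschitz}, \ref{Dot_product_Lipschitz}, and \ref{O_Lipschitz} for the three summands (checking that $\|g\|_S\le S\epsilon<S^2/2$ so Lemma \ref{O_Lipschitz} applies), and then obtain the norm bound by setting $h=0$ and noting $F(0)=-W$. You have simply supplied more of the routine verifications than the paper writes out.
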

\begin{proof}
The local Lipschitz condition of $F$ follows from Lemmas \ref{W_Lipschitz} (with the explicit formula for $\mathscr{C}(R+\epsilon,S+\epsilon)$), \ref{Dot_product_Lipschitz}, and \ref{O_Lipschitz}, where we observe that
	\[
		\|g\|_S\leq S\epsilon< \frac{S^2}{2}
	\]
(and similarly for $h$) implies that the hypotheses of Lemma \ref{O_Lipschitz} are satisfied. The bound then follows by setting $h=0$ and noting that $F(0)=-W$.
\end{proof}

\begin{cor}\label{F_has_fixed_point}
Let $R>S>4$, and $W\in \ps^{(R+1)}$. If $W(0)=0$ and
	\[
		\|W\|_{R+1} \leq \frac{e\log\left(\frac{R+1}{S+1}\right)}{2},
	\]
then $F$ maps $B:=\{g\in \ps^{(S)}\colon \|g\|_S<1\}$ into itself and satisfies $\| F(g) - F(h)\|_S< \frac{1}{2} \|g- h\|_S$ on $B$. Furthermore, there exists $\hat g\in B$ such that $F(\hat g)=\hat g$ and $\|\hat g\|_S \leq 2\|W\|_S$. If $W^*=W$, then $\hat g^* = \hat g$.
\end{cor}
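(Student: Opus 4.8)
The plan is to set $\epsilon = 1$ and $R' = R+1$ in Corollary \ref{F_Lipschitz}, so that $F$ is defined on $B = \{g \in \ps^{(S)} : \|g\|_S < 1\}$ (noting $1 < S\epsilon = S$ since $S > 4$, and $\epsilon = 1 < S/2$). First I would verify that $F$ maps $B$ into itself: for $g \in B$ with $W(0) = 0$, the bound from Corollary \ref{F_Lipschitz} with $\|g\|_S < 1$ and $\|W\|_S \le \|W\|_{R+1}$ gives
	\[
		\|F(g)\|_S \le \|g\|_S\left\{ \frac{\|W\|_{R+1}}{S(S+1)e\log\left(\frac{R+1}{S+1}\right)} + \frac{1}{2S^2} + \frac{2}{S^2}\left(\frac{1}{1-\frac{2}{S^2}} + 1\right)\right\} + \|W\|_{R+1}.
	\]
Using the hypothesis $\|W\|_{R+1} \le \frac{e\log((R+1)/(S+1))}{2}$ together with $S > 4$, one checks the bracketed expression is bounded by $\frac12$ and that $\|W\|_{R+1} \le \frac12$ (since $\log((R+1)/(S+1)) < R+1$ and so $\frac{e\log(\cdots)}{2}$ can exceed $\tfrac12$ — this needs care, so instead I would directly estimate $\|F(g)\|_S < \frac12 \|g\|_S + \|W\|_S \le \frac12 + \frac12 = 1$ after confirming the bracket is $< \tfrac12$; the numeric constants $\frac{1}{2S^2}$ and $\frac{2}{S^2}(\cdots)$ are small for $S > 4$). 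Similarly, the local Lipschitz estimate from Corollary \ref{F_Lipschitz} with $\|g\|_S, \|h\|_S < 1$ and the smallness of $\|W\|_{R+1}$ gives $\|F(g) - F(h)\|_S < \frac12 \|g-h\|_S$ on $B$.

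Next, since $(B, \|\cdot\|_S)$ is a complete metric space (a closed ball would be needed for completeness — here I would instead apply the Banach fixed point theorem on the closed ball $\bar B = \{\|g\|_S \le 1-\delta\}$ for suitable small $\delta$, or observe that $F$ being a strict contraction into the open ball means its iterates from $0$ stay in a closed ball of radius strictly less than $1$). Starting the iteration at $g_0 = 0$ and setting $g_{k+1} = F(g_k)$, the contraction estimate gives $\|g_{k+1} - g_k\|_S \le 2^{-k}\|g_1 - g_0\|_S = 2^{-k}\|W\|_S$, so $(g_k)$ is Cauchy in $\ps^{(S)}$ and converges to a fixed point $\hat g = F(\hat g) \in B$. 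Summing the geometric series yields $\|\hat g\|_S \le \sum_{k\ge 0} 2^{-k}\|W\|_S = 2\|W\|_S$.

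Finally, for the self-adjointness: if $W^* = W$, I would show $F(g^*) = F(g)^*$ for all $g \in B$, so that $g \mapsto F(g)^*$ has the same unique fixed point as $F$, forcing $\hat g^* = \hat g$. This requires checking each of the three pieces of $F$ respects the $*$-operation. For the first term, $\Sigma$ commutes with $*$ and $\ds_n(P^*) = \ds_n(P)^*$ (stated in Subsection \ref{Free difference quotients, conjugate variables, and cyclic derivatives}), so $\ds\Sigma(g^*) = (\ds\Sigma g)^*$ componentwise; since $X_n = X_n^*$ and $W^* = W$, one gets $W(X + \ds\Sigma g^*) = W(X + \ds\Sigma g)^*$. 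For the quadratic term $\frac12(\ds\Sigma g)\#(\ds\Sigma g) = \frac12\sum_n (\ds_n\Sigma g)^2$, applying $*$ gives $\frac12\sum_n (\ds_n\Sigma g^*)^2$ as desired. For $L(\Sigma g)$, I would use Lemma \ref{involutions_of_J}, which gives $(\js\ds g)^* = (\js\ds g)^\dagger = \js\ds g$ for $g = g^*$, combined with the fact that $\Tr$ and $(1\otimes\tau + \tau\otimes 1)$ intertwine the relevant involutions appropriately, so that $L(\Sigma g^*) = L(\Sigma g)^*$. The main obstacle I anticipate is bookkeeping the numeric constants in the self-into-self and contraction estimates to confirm the bracketed quantities in Corollary \ref{F_Lipschitz} are genuinely bounded by $\frac12$ for all $R > S > 4$ — this is where the precise hypothesis on $\|W\|_{R+1}$ and the choice $\epsilon = 1$ must be used carefully, and where the completeness-of-domain subtlety (open versus closed ball) must be handled cleanly.
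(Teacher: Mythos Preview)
Your approach is essentially the same as the paper's, but there is one genuine gap you flag and do not resolve: the bound $\|W\|_S \leq \tfrac12$. You correctly observe that $\|W\|_{R+1}\le \tfrac{e}{2}\log\bigl(\tfrac{R+1}{S+1}\bigr)$ need not be $\le \tfrac12$ (indeed it can be arbitrarily large as $R\to\infty$ with $S$ fixed), so the naive inequality $\|W\|_S\le\|W\|_{R+1}$ is insufficient. The fix uses the hypothesis $W(0)=0$, which you never invoke: since $W$ has no constant term, every monomial has degree $\ge 1$, whence
\[
\|W\|_S \;\le\; \frac{S}{R+1}\,\|W\|_{R+1}\;\le\; S\cdot \mathscr{C}(R+1,S+1)\,\|W\|_{R+1}\;\le\; \frac{S}{2(S+1)}\;<\;\frac12,
\]
using $\tfrac{1}{R+1}\le \mathscr{C}(R+1,S+1)$ (take $t=1$ in the supremum defining $\mathscr{C}$) and the hypothesis on $\|W\|_{R+1}$. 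With this in hand, your numeric checks go through exactly as you outlined: for $S>4$ the bracketed quantity in Corollary~\ref{F_Lipschitz} is at most $\tfrac{1}{40}+\tfrac{1}{32}+\tfrac{1}{8}\bigl(\tfrac{8}{7}+1\bigr)<\tfrac12$, so $\|F(g)\|_S<\tfrac12+\tfrac12=1$, and similarly for the Lipschitz bound.

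Your worry about the open-versus-closed ball is unnecessary. The iteration lives in the ambient Banach space $\ps^{(S)}$, which is complete; once you know $(g_k)$ is Cauchy, it converges to some $\hat g\in\ps^{(S)}$, and the bound $\|\hat g\|_S\le 2\|W\|_S<1$ (now available, since $\|W\|_S<\tfrac12$ strictly) places $\hat g$ in the open ball $B$ after the fact. The self-adjointness argument is fine; the paper handles it in one line by noting $F(h^*)=F(h)^*$ when $W=W^*$, so the iterates starting from $-W$ (or from $0$) are all self-adjoint.
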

\begin{proof}
The upper bound on $\|W\|_{R+1}$ implies $\mathscr{C}(R+1,S+1)\|W\|_{R+1}\leq \frac{1}{2(S+1)}$. Since $W$ has no constant term, we then have
	\begin{align*}
		\|W\|_S \leq S\frac{1}{R+1} \|W\|_{R+1} \leq S\cdot\mathscr{C}(R+1,S+1)\|W\|_{R+1}\leq\frac{1}{2}.
	\end{align*}
Then for $g\in B$, we use the bound from Corollary \ref{F_Lipschitz} (along with $S>4$) to obtain
	\[
		\|F(g)\|_S < \left\{ \frac{1}{40} + \frac{1}{32}+\frac{1}{8}\left(\frac8{7} + 1\right)\right\} + \frac{1}{2} = \frac{1}{40} + \frac{1}{32} + \frac{1}{7}+\frac{1}{8}+\frac{1}{2}<1.
	\]
Then using the Lipschitz condition for $F$ we have
	\[
		\| F(g) - F(h)\|_S \leq \|g-h\|_S\left\{\frac{1}{40} + \frac{1}{16} +\frac{1}{8}\left(\frac{64}{49} +1\right)\right\} < \frac{1}{2} \|g-h\|_S.
	\]
	
To produce the fixed point $\hat g$, we recursively define a sequence by $\hat g_n = F(\hat g_{n-1})$ and $\hat g_0=-W=F(0)$. Then
	\[
		\|\hat g_{n+1} - \hat g_n\|_S =\| F(\hat g_n) - F(\hat g_{n-1})\|_S < \frac{1}{2} \|\hat g_n - \hat g_{n-1}\|_S<\cdots <\left(\frac{1}{2}\right)^{n} \|\hat g_1 - \hat g_0\|_S.
	\]
It follows that for any $m,n\in \nz$, $m>n$, we have
	\[
		\|\hat g_m - \hat g_n\|_S \leq \sum_{k=n}^{m-1} \| \hat g_{k+1} - \hat g_k\|_S< \sum_{k=n}^\infty \left(\frac{1}{2}\right)^k \|\hat g_1 - \hat g_0\|_S=  \left(\frac{1}{2}\right)^{n-1} \|\hat g_1 - \hat g_0\|_S.
	\]
Hence $(\hat g_n)_{n\in\nz}$ is a Cauchy sequence, whose limit $\hat g$ is a fixed point of $F$. Moreover, using the Lipschitz condition of $F$ we have that
	\begin{align*}
		\|\hat g\|_S &\leq \lim_{n\to\infty} \|\hat g - \hat g_n\|_S + \|\hat g_n - \hat g_0\|_S+\|\hat g_0\|_S\\
			& \leq 2\|\hat g_1 - \hat g_0\|_S + \|\hat g_0\|_S = 2\| F(-W) - F(0)\|_S + \|W\|_S \leq 2\|W\|_S.
	\end{align*}
Lastly, note that if $W=W^*$ then $F(h^*)=F(h)^*$. Then since $\hat g_0 = -W$ is self-adjoint, each $\hat g_n = F(\hat g_{n-1})$ is self-adjoint and consequently so is $\hat g$.
\end{proof}

\begin{theorem}\label{Y_has_free_Gibbs_state}
Let $R>S>4$. Assume $W=W^*\in \ps^{(R+1)}$ satisfies $W(0)=0$ and
	\[
		\|W\|_{R+1} \leq \frac{e\log\left(\frac{R+1}{S+1}\right)}{2}.
	\]
Then there exists a cyclically symmetric $g=g^*\in \ps^{(S)}$ such that the joint law of $Y=X+\ds g$ with respect to $\tau$ is the unique free Gibbs state with perturbation $W$. Furthermore, there exists $H\in \ps^{(2.4)}_\infty$ such that $H(Y)=X$.
\end{theorem}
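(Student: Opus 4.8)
The plan is to build $g$ from the fixed point of $F$ furnished by Corollary~\ref{F_has_fixed_point}, to recognise the fixed-point equation as the Schwinger--Dyson equation for $Y=X+\ds g$ through the reductions already assembled in Section~\ref{Construction}, and finally to invert $Y\mapsto X$ by the inverse function theorem (Lemma~\ref{inverse_function}).

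\emph{The Schwinger--Dyson equation.} By Corollary~\ref{F_has_fixed_point} the hypotheses on $W$ yield a self-adjoint fixed point $\hat g\in\ps^{(S)}$ of $F$ with $\|\hat g\|_S<1$, i.e.
\[
	\hat g=-W(X+\ds\Sigma\hat g)-\tfrac12(\ds\Sigma\hat g)\#(\ds\Sigma\hat g)+L(\Sigma\hat g).
\]
Put $g_0:=\Sigma\hat g\in\ps^{(S)}$, which is self-adjoint because $\Sigma$ commutes with the involution. I would first fix some $S'\in(2,S)$ (for concreteness $S'=\tfrac{S+2}{2}$, with intermediate radius $T=\tfrac{3S+2}{4}\in(S',S)$) and check, via Corollary~\ref{JD_bounded} and $\|\hat g\|_S<1$, that $\|\js\ds g_0\|_{S',1,1}\le\mathscr{C}(T,S')\,\mathscr{C}(S,T)\,\|\hat g\|_S<1$, the last inequality being an elementary estimate using $\mathscr{C}(a,b)=\frac{1}{eb\log(a/b)}$ valid for all $S>4$. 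Consequently $1+\js\ds g_0$ is invertible in $\mx_\infty(\ps\hat\otimes_{S'}\ps^{op})$ and, the series defining $L$ converging in $\|\cdot\|_S\ge\|\cdot\|_{S'}$, one has $L(\Sigma\hat g)=(1\otimes\tau+\tau\otimes1)\circ\Tr[\log(1+\js\ds g_0)]$. Since $\ns g_0=\ns\Sigma\hat g=\Pi\hat g=\hat g-\hat g(0)$, the displayed fixed-point equation says precisely that $g_0$ satisfies \eqref{sufficient_SD} up to the additive scalar $\hat g(0)$. Applying $\ds$ (which annihilates that scalar) and running the identities of Lemmas~\ref{the_trick_lemma}, \ref{Q_Taylor_series} and \ref{K_as_cyclic_derivative} exactly as in the proof of Lemma~\ref{last_SD_lemma}, I get the $\ds$-level form of the Schwinger--Dyson equation for $g_0$, hence \eqref{K_SD}, hence (using the invertibility just noted) \eqref{vector_SD_in_X}, \eqref{vector_SD} and \eqref{SD_in_Y}; that is, the joint law of $Y:=X+\ds g_0$ with respect to $\tau$ is a free Gibbs state with perturbation $W$. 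Uniqueness among normal states is Corollary~\ref{Gibbs_state_unique}: with $\sup_n\|X_n\|=2$ and $R+1>5$ one has $\|W\|_{R+1}\le\tfrac{e}{2}\log\tfrac{R+1}{S+1}<12\,e\log\tfrac{R+1}{4}$, which is the hypothesis there.

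\emph{Cyclic symmetry and the inverse.} Since $\ds_n\ss=\ds_n$ on monomials (the cyclic derivative is invariant under cyclic rotation of a word), hence on $\ps^{(S)}$ by boundedness and density, replacing $g_0$ by $g:=\ss g_0=\ss\Sigma\hat g$ leaves $\ds g=\ds g_0$ and therefore $Y$ unchanged, while $g$ is cyclically symmetric by construction, self-adjoint (both $\ss$ and $\Sigma$ commute with the involution and $\hat g=\hat g^*$), and lies in $\ps^{(S)}$ (both maps being contractive there); this $g$ is the one in the statement. For the inverse, Lemma~\ref{cyclic_derivative_bounded} gives $\|\ds g\|_{4,\infty}\le\|\ds g\|_{S,\infty}\le\|\ds\Sigma\hat g\|_{S,1}\le\tfrac1S\|\hat g\|_S<\tfrac14$, so I apply Lemma~\ref{inverse_function} with radii $4>2.3>\sup_n\|X_n\|=2$: its constant $C=C(4,2.3)$ exceeds $\tfrac14$ (by the closed form in Remark~\ref{inverse_function_constant}, $C\approx 0.35$), so there is $H\in\ps^{(2.3+C)}_\infty\subseteq\ps^{(2.4)}_\infty$ with $H(X+\ds g)=X$, i.e.\ $H(Y)=X$.

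\emph{Main difficulty.} The substantive points are the radius bookkeeping: the entire Schwinger--Dyson reduction has to be carried out at a radius $S'$ \emph{strictly} below $S$ (since $\js$ costs a drop in radius), and one must keep $\|\js\ds g_0\|_{S',1,1}<1$ there so that $1+\js\ds g_0$ is invertible and the logarithm series converges, which is exactly what forces the estimate $\mathscr{C}(T,S')\mathscr{C}(S,T)<1$ and the constraint $S>4$. A second, purely cosmetic, wrinkle is that a fixed point of $F$ need not have vanishing constant term, so $g_0$ solves \eqref{sufficient_SD} only modulo a scalar; this is harmless because every step of the reduction is applied after $\ds$, but it must be recorded. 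All remaining steps are direct appeals to the lemmas of Sections~\ref{Notation}--\ref{Construction}.
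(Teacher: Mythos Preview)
Your proof is correct and follows the same strategy as the paper's: take $\hat g$ from Corollary~\ref{F_has_fixed_point}, set $g=\ss\Sigma\hat g$, verify $\|\js\ds\Sigma\hat g\|_{S',1,1}<1$ at some $S'<S$ so that Lemma~\ref{last_SD_lemma} applies, invoke Corollary~\ref{Gibbs_state_unique} for uniqueness, and finish with Lemma~\ref{inverse_function}. The only differences are cosmetic: the paper checks $\|\js f\|_{3,1,1}\le \mathscr{C}(S,3)\cdot\tfrac1S\|\hat g\|_S\le \tfrac1{12e\log(4/3)}<1$ directly (using $\js$ on $\ps^{(S)}_1$) rather than via Corollary~\ref{JD_bounded}, and it runs the inverse-function lemma with parameters $R=3.9$, $S=2.1$ instead of your $4$, $2.3$; both choices land in $\ps^{(2.4)}_\infty$. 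Your explicit remark that $\ns\Sigma\hat g=\Pi\hat g$ may differ from $\hat g$ by a constant (harmless after applying $\ds$) is a point the paper glosses over.
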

\begin{proof}
We let $\hat g$ be the fixed point of $F$ guaranteed by Corollary \ref{F_has_fixed_point}, and take $g=\Sigma \hat g$ (recall $\|\hat g\|_S\leq 1$). Hence $g$ satisfies (\ref{sufficient_SD}). Also, if $f=\ds g=\ds\Sigma \hat g$ then $\js f\in \mx_1(\ps\hat\otimes_{3}\ps^{op})$ with
	\[
		\|\js f\|_{3,1,1}\leq \frac{1}{3 e \log(S/3)} \|\ds\Sigma \hat g\|_S \leq \frac{1}{3 e\log(S/3)} \frac{1}{S} \|\hat g\|_S\leq  \frac{1}{12 e \log(4/3)}<1,
	\]
since $S>4$ and $\|\hat g\|_S\leq 1$. Thus Lemma \ref{last_SD_lemma} implies the joint law of $Y=X+\ds g$ is a free Gibbs state with perturbation $W$. Since $\ds = \ds \ss$, we can freely replace $g$ with the cyclically symmetric element $\ss g$.

We note that
	\[
		\frac{e\log\left(\frac{R+1}{S+1}\right)}{2} \leq 12 e\log\left(\frac{R+1}{4}\right),
	\]
so that the hypotheses of Corollary \ref{Gibbs_state_unique} are satisfied, implying that $\tau_Y$ is the unique free Gibbs state with perturbation $W$.

Finally, the constant $C$ from Remark \ref{inverse_function_constant} for $S=2.1$, $T=3$, and $R=3.9$ satisfies $C>0.3718$. Since $\|f\|_{3.9,\8} \leq \|f\|_{S,\8} \leq \frac{1}{S}\|\hat g\|_S \leq \frac{1}{4}$, it follows from Lemma \ref{inverse_function} that there exists $H\in \ps^{(2.1+C)}_{\infty} \subset \ps^{(2.4)}_{\infty}$ satisfying $H(Y)=H(X+f)=X$.
\end{proof}


\subsection{Isomorphism results}

We rephrase Theorem \ref{Y_has_free_Gibbs_state} in terms of transport.

\begin{theorem}\label{transport_thm}
Let $X=(X_n)_{n\in \nz}$ be free semicircular random variables generating the von Neumann algebra $M\cong L(\fz_\infty)$, with trace $\tau$, and let $R>S>4$. Suppose $N$ is a von Neumann algebra with a faithful normal state $\psi$, and the joint law of $Z=(Z_n)_{n\in \nz}\subset N$ with respect to $\psi$ is a free Gibbs state with perturbation $W=W^*=\ps^{(R+1)}$. If
	\[
		\|W\|_{R+1} \leq \frac{e\log\left(\frac{R+1}{S+1}\right)}{2},
	\]
then transport from $\tau_X$ to $\psi_Z$ is given by $Y=X+\ds g \in \ps^{(S)}_\infty$ for some cyclically symmetric $g=g^*\in \ps^{(S)}$. This free transport is monotone, satisfies $\|Y-X\|_{S,\infty}\to 0$ as $\|W\|_{R+1}\to 0$, and is invertible in the sense that $H(Y)=X$ for some $H\in \ps^{(2.4)}$.

In particular, there are trace-preserving isomorphisms:
	\[
		C^*(Z_n\colon n\in \nz)\cong C^*(X_n\colon n\in \nz)\qquad\text{and}\qquad W^*(Z_n\colon n\in \nz)\cong L(\fz_\infty).
	\]
\end{theorem}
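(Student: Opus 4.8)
The plan is to obtain the statement as a repackaging of Theorem~\ref{Y_has_free_Gibbs_state}, using the uniqueness furnished by Corollary~\ref{Gibbs_state_unique} to identify the constructed law with $\psi_Z$, and then to convert the resulting coincidence of joint laws into the asserted isomorphisms.

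First I would note that cyclic derivatives annihilate constants and $\|\Pi W\|_{R+1}\le\|W\|_{R+1}$, so there is no loss in assuming $W(0)=0$; the hypotheses of Theorem~\ref{Y_has_free_Gibbs_state} are then precisely the present ones. Applying it produces a cyclically symmetric $g=g^*\in\ps^{(S)}$ with the property that $Y:=X+\ds g\in\ps^{(S)}_\infty$ has joint law $\tau_Y$ equal to the unique free Gibbs state with perturbation $W$, together with an $H\in\ps^{(2.4)}_\infty$ satisfying $H(Y)=X$ (this gives the invertibility). Since the same bound on $\|W\|_{R+1}$ places $W$ inside the uniqueness range of Corollary~\ref{Gibbs_state_unique} (where $\sup_n\|X_n\|=2$), and $\psi_Z$ is by assumption a free Gibbs state with perturbation $W$ realized by the normal state $\psi$, it follows that $\psi_Z=\tau_Y$; hence $Y$ is transport from $\tau_X$ to $\psi_Z$. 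For the asymptotics, Corollary~\ref{F_has_fixed_point} gives $\|\hat g\|_S\le 2\|W\|_S$, and combined with $\|\ds\Sigma\cdot\|_{S,1}\le\tfrac{1}{S}\|\cdot\|_S$ (Lemma~\ref{cyclic_derivative_bounded}) and $\|W\|_S\le\tfrac{S}{R+1}\|W\|_{R+1}$ this yields $\|Y-X\|_{S,\infty}\le\|\ds g\|_{S,1}\le\tfrac{2}{R+1}\|W\|_{R+1}\to0$ as $\|W\|_{R+1}\to0$.

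For monotonicity I would use that the proof of Theorem~\ref{Y_has_free_Gibbs_state} establishes $\|\js\ds g\|_{3,1,1}<1$; by Proposition~\ref{prop:M_1_bdd} the operator $(\partial_j\ds_i g)_{i,j}$ on $\ell^2(\nz,L^2(M\bar\otimes M^{op}))$ then has norm $<1$, so $1+(\partial_j\ds_i g)_{i,j}\ge0$. Because the monotonicity condition is stated in terms of polynomials, I would approximate $g$ by polynomials $g_k$ with $\|g_k-g\|_S\to0$; boundedness of $\js\ds\colon\ps^{(S)}\to\mx_1(\ps\hat\otimes_3\ps^{op})$ (Corollary~\ref{JD_bounded}) and of $\ds\colon\ps^{(S)}\to\ps^{(3)}_1\subset\ell^2(\nz,L^2M)$ (Lemma~\ref{cyclic_derivative_bounded}) then gives $\js\ds g_k\to\js\ds g$ in $\mx_1(\ps\hat\otimes_3\ps^{op})$ and $\ds g_k\to\ds g$ in $\ell^2(\nz,L^2M)$, so for large $k$ one still has $\|\js\ds g_k\|_{3,1,1}<1$ and hence $1+(\partial_j\ds_i g_k)_{i,j}\ge0$; thus $Y-X=\ds g$ lies in the prescribed $\ell^2$-closure.

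Finally, for the isomorphisms I would note that $\sum_n\|\ds_n g\|_S=\|\ds\Sigma\hat g\|_{S,1}\le\tfrac{1}{S}$, so $\sup_n\|Y_n\|\le 2+\tfrac{1}{S}<2.4$; consequently the defining power series of $Y_n=X_n+\ds_n g$ converges in norm, giving $Y_n\in C^*(X_n\colon n\in\nz)$, while $X_n=H_n(Y)\in C^*(Y_n\colon n\in\nz)$, so $C^*(Y_n\colon n\in\nz)=C^*(X_n\colon n\in\nz)$ and likewise $W^*(Y_n\colon n\in\nz)=W^*(X_n\colon n\in\nz)=M\cong L(\fz_\infty)$. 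Since $\psi$ and $\tau$ are faithful normal and $\psi_Z=\tau_Y$, the map $Z_{i_1}\cdots Z_{i_d}\mapsto Y_{i_1}\cdots Y_{i_d}$ extends to a unitary of GNS Hilbert spaces whose conjugation implements a trace-preserving isomorphism $W^*(Z_n\colon n\in\nz)\cong W^*(Y_n\colon n\in\nz)$ restricting to $C^*(Z_n\colon n\in\nz)\cong C^*(Y_n\colon n\in\nz)$; combined with the previous identifications this gives the conclusion (and shows en passant that $\psi$ restricts to a trace on $W^*(Z_n\colon n\in\nz)$). The least automatic step here is the monotonicity --- namely, polynomially approximating the power series $g$ while keeping $1+(\partial_j\ds_i g_k)_{i,j}\ge0$ --- although even this is routine given the operator-norm bound $\|\js\ds g\|_{3,1,1}<1$; everything else is bookkeeping with the norms assembled in Section~\ref{Notation}.
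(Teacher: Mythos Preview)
Your proposal is correct and follows essentially the same path as the paper's proof: invoke Theorem~\ref{Y_has_free_Gibbs_state} for the construction and invertibility, use uniqueness (Corollary~\ref{Gibbs_state_unique}) to identify $\tau_Y$ with $\psi_Z$, and approximate $g$ by polynomials to verify monotonicity. The only point to tighten is the monotonicity step: to pass from $\|\js\ds g_k\|_{3,1,1}<1$ to $1+\js\ds g_k\geq 0$ you need $\js\ds g_k$ to be self-adjoint as an operator on $\ell^2(\nz,L^2(M\bar\otimes M^{op}))$, which the paper secures by choosing the approximants $g_k=g_k^*$ and invoking Lemma~\ref{involutions_of_J} together with Remark~\ref{rem:M_1_inv_to_B_adj}.
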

\begin{proof}
We let $g$ and $H$ be as in Theorem \ref{Y_has_free_Gibbs_state}. That $Y=X+\ds g$ gives transport from $\tau_X$ to $\psi_Z$ follows from the uniqueness for the free Gibbs state with perturbation $W$. Since $g=\Sigma \hat g$ for some $\hat g\in \ps^{(S)}$, we have $Y\in \ps^{(S)}_\infty$ and
	\[
		\|Y-X\|_{S,\infty} \leq \frac{1}{S} \|\hat g\|_S \leq \frac{2}{S}\|W\|_S.
	\]

Finally, we show that the free transport is monotone. The computation in Theorem \ref{Y_has_free_Gibbs_state} showed $\|\js \ds g\|_{3,1,1} = \|\js f\|_{3,1,1}<1$. By Corollary \ref{JD_bounded}, we can find $P=P^*\in \ps$ sufficiently close to $g$ in $\ps^{(S)}$ so that $\|\js\ds P\|_{3,1,1}<1$. We view $\js\ds P\in \mx_1(\ps\hat\otimes_3\ps^{op})$ as an element of $\bx(\ell^2(\nz, L^2(M\bar{\otimes} M^{op})))$ via Proposition \ref{prop:M_1_bdd}, and in particular its operator norm is strictly less than one. By Lemma \ref{involutions_of_J}, $(\js\ds P)^*=\js\ds P$ in $\mx_1(\ps\hat\otimes_3\ps^{op})$, and so by Remark \ref{rem:M_1_inv_to_B_adj} it is self-adjoint in $\bx(\ell^2(\nz, L^2(M\bar{\otimes} M^{op})))$. All of this is to say that $1+\js \ds P >0$ in $\bx(\ell^2(\nz, L^2(M\bar{\otimes} M^{op})))$. Now, we can also guarantee that $\ds P$ is close to $\ds g=Y-X$ with respect to $\|\cdot\|_{S',1}$ for some $S' \in(4,S)$ by Lemma \ref{cyclic_derivative_bounded}, and hence with respect to the norm on $\ell^2(\nz,L^2 M)$. Thus the free transport is monotone.
\end{proof}

\section{Application to infinitely generated mixed $q$-Gaussian algebras}
In this section, we will use the free monotone transport theorem to show that the mixed $q$-Gaussian algebra $\Gamma_Q$ is isomorphic to $L(\fz_\8)$ if the structure array $Q$ has entries which are uniformly small and decay sufficiently rapidly. The analog of $C^*$-algebras is also true. As in \cites{Dab,Ne15,NZ15}, the idea is to show that the mixed $q$-Gaussian variables $X_n^Q$, $n\in\nz$, are conjugate variables to new derivations $\partial_n^{(Q)}$ that can be thought of as ``perturbations'' of the free difference quotients. From the adjoints of these new derivations one is able to obtain the conjugate variables \emph{to the free difference quotients}, which are themselves shown to be perturbations of the mixed $q$-Gaussian variables.

It is easy to see that the mixed $q$-Gaussian variables are algebraically free, hence their free difference quotients are well-defined and we denote them by $\partial_n$, $n\in\nz$. However, \emph{a priori} we do not know that $X^Q=(X_n^Q)_{n\in\nz}$ has conjugate variables, and consequently we must return to the formalism of Subsection \ref{temp_formalism} and the sequence $T=(T_n)_{n\in\nz}$ of non-commuting self-adjoint indeterminates.

Recall that $\ps(T)$ denotes the polynomials in the $T_n$, whereas $\ps(X^Q)$ denotes the polynomials in the mixed $q$-Gaussian variables. Also, when $R\geq\sup_n \|X_n^Q\|$, we have contractive maps
	\[
		\ps(T)^{(R)}\to \Gamma_Q\qquad\text{and}\qquad \ps(T)\hat\otimes_R\ps(T)^{op}\to \Gamma_Q\bar\otimes \Gamma_Q^{op},
	\]
given by replacing each $T_n$ with $X_n^Q$. The image of $P(T)\in \ps(T)^{(R)}$ (resp. $\eta(T)\in \ps(T)\hat\otimes_R\ps(T)^{op}$) under this map is denoted $P(X^Q)$ (resp. $\eta(X^Q)$).


\subsection{A natural derivation}

As in \cite{NZ15}, we consider the derivation for $j\in\nz$,
\begin{align*}
\partial_n^{(Q)}: \ps(X^Q) \to \bx(L^2(\Ga_Q)),\quad \partial_n^{(Q)}(P)=[P,r_n] =Pr_n - r_n P.
\end{align*}
Since $[l_i,r_j]=0$ for all  $i,j\in\nz$, we have
\[
\partial_j^{(Q)}(X_i^Q)(e_{k_1}\otimes \cdots \otimes e_{k_n}) = \de_{i=j}q_{ik_1}\cdots q_{ik_n} e_{k_1}\otimes \cdots \otimes e_{k_n}.
\]
For simplicity, we write $q_i(\vec{k})=q_{ik_1}\cdots q_{ik_n}$. For $d\in \nz$, we define an equivalence relation on $\nz^d$ as in \cite{NZ15}: We say $\vec{i}\sim \vec{j}$ if there exists $\si\in S_d$ such that
\[
\vec{i}= \si \cdot \vec{j} = (j_{\si(1)},..., j_{\si(d)}).
\]
Let $[\vec{i}]$ denote the equivalence class of $\vec{i}\in \nz^d$. Then we have $q_k(\vec{j}) = q_k(\vec i)$ for $\vec j\in [\vec i]$ and $k\in \nz$. For each equivalence class $[\vec i]$, we define a subspace $\fx_{[\vec i]}$ of $(\ell^2)^{\otimes n}$ by
\[
\fx_{[\vec i]}= {\rm span}\left\{ e_{j_1}\otimes\cdots\otimes e_{j_n}\colon \vec{j}\in [\vec{i}]\right\}
\]
We denote by $p_{[\vec{i}]}$ the orthogonal projection from $\fx_Q$ onto $\fx_{[\vec{i}]}$. For convenience, we write $\fx_{\emptyset}=\cz\Om$. Then it is straightforward to check that the subspaces $\fx_{[\vec{i}]}$, where $[\vec i]$ range over all equivalence classes and all $n\geq 0$, give an orthogonal decomposition of $\fx_Q$. Consequently
	\begin{align*}
		p_\Omega + \sum_{n\geq 1} \sum_{[\vec{i}]\in \nz^n/\sim} p_{[\vec{i}]} = 1,
	\end{align*}
where $p_\Omega$ is the projection onto the vacuum vector. To consider $p_\Om$ as one of the $p_{[\vec i]}$, we will often write $p_\Omega=p_{[\emptyset]} \in \nz^0/\sim$. For $n\in\nz$, we define
	\begin{align}\label{HS}
		\Xi_n =\sum_{d\geq 0} \sum_{[\vec{i}]\in \nz^d/\sim} q_n(\vec{i})p_{[\vec{i}]}.
	\end{align}
It follows that $\partial_j^{(Q)} (X_i^Q) = \de_{i=j} \Xi_j$.
Like \cites{Sh04,Dab,Ne15,NZ15}, we want to understand when $\Xi_n$ is a Hilbert--Schmidt operator. Recall that $Q_i(p)=\sum_{j\geq 1} |q_{ij}|^p$ as defined in \eqref{qip}.

\begin{lemma}
Fix $n\in \nz$ and suppose $Q_n(2)<1$. Then $\Xi_n$ is a Hilbert--Schmidt operator with $\|\Xi_n\|_{\rm HS} = (1 - Q_n(2))^{-1/2}$, where $\|\cdot\|_{\rm HS}$ denotes the Hilbert--Schmidt norm.
\end{lemma}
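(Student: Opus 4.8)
The plan is to exploit that $\Xi_n$ is diagonalized by the orthogonal decomposition $\fx_Q=\bigoplus_{d\ge 0}\bigoplus_{[\vec i]\in\nz^d/\sim}\fx_{[\vec i]}$. First I would note that each $p_{[\vec i]}$ is an orthogonal projection for the deformed inner product $\langle\cdot,\cdot\rangle_Q$ and each coefficient $q_n(\vec i)$ is real, so that $\Xi_n$ is self-adjoint on $\fx_Q$ and acts on $\fx_{[\vec i]}$ as multiplication by the scalar $q_n(\vec i)$. Choosing an orthonormal basis of $\fx_Q$ which is a union of orthonormal bases of the subspaces $\fx_{[\vec i]}$, this immediately gives
\[
\|\Xi_n\|_{\rm HS}^2=\sum_{d\ge 0}\sum_{[\vec i]\in\nz^d/\sim}q_n(\vec i)^2\,\dim\fx_{[\vec i]},
\]
with the $d=0$ term being $q_n(\emptyset)^2\dim(\cz\Om)=1$ (empty product). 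Establishing the finiteness of the right-hand side is precisely what shows $\Xi_n$ is Hilbert--Schmidt, so no separate boundedness argument is needed.

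Next I would identify $\dim\fx_{[\vec i]}$ with the cardinality $\#[\vec i]$ of the equivalence class, i.e.\ the number of distinct rearrangements of $\vec i$. This is the one step where the deformation requires a moment's care: by \eqref{pninn} the restriction of $\langle\cdot,\cdot\rangle_Q$ to $(\ell^2)^{\otimes d}$ equals $\langle\cdot,P^{(d)}\,\cdot\rangle_0$ with $P^{(d)}$ strictly positive, hence the $Q$-inner product is positive definite there; consequently the vectors $e_{j_1}\otimes\cdots\otimes e_{j_d}$, $\vec j\in[\vec i]$, which are orthonormal for the free Fock inner product, remain linearly independent in $\fx_Q$ and thus form a basis of $\fx_{[\vec i]}$.

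Finally I would collapse the double sum. Since $q_n(\vec j)=q_n(\vec i)$ for every $\vec j\in[\vec i]$, summing over a class and then over all classes of $\nz^d$ is the same as summing over $\nz^d$, giving
\[
\sum_{[\vec i]\in\nz^d/\sim}q_n(\vec i)^2\,\#[\vec i]=\sum_{\vec j\in\nz^d}q_n(\vec j)^2=\sum_{\vec j\in\nz^d}\prod_{k=1}^{d}q_{nj_k}^2=\Big(\sum_{j\ge 1}q_{nj}^2\Big)^d=Q_n(2)^d.
\]
Summing the resulting geometric series, which converges exactly because $Q_n(2)<1$, yields $\|\Xi_n\|_{\rm HS}^2=\sum_{d\ge0}Q_n(2)^d=(1-Q_n(2))^{-1}$, and taking square roots completes the proof. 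The main (and essentially only non-routine) obstacle is the middle step — ensuring that passing to the $q$-deformed inner product does not change the dimensions $\dim\fx_{[\vec i]}$ — which is resolved by the strict positivity of the operators $P^{(n)}$ in \eqref{pninn}.
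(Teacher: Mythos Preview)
Your proof is correct and follows essentially the same route as the paper's: both use the orthogonal decomposition of $\fx_Q$ into the blocks $\fx_{[\vec i]}$, compute $\|\Xi_n\|_{\rm HS}^2=\sum_{d\ge0}\sum_{[\vec i]}|q_n(\vec i)|^2\dim\fx_{[\vec i]}$, identify $\dim\fx_{[\vec i]}=\#[\vec i]$, and collapse to $\sum_{d\ge0}Q_n(2)^d$. Your argument is in fact slightly more explicit than the paper's in justifying $\dim\fx_{[\vec i]}=\#[\vec i]$ via the strict positivity of $P^{(d)}$.
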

\begin{proof}
Fix $d\geq 0$ and denote by $\dim (S)$ the dimension of a vector space $S$. By orthogonality, we have
	\begin{align*}
		\left\|\sum_{[\vec{j}]\in \nz^d/\sim} q_n(\vec{j}) p_{[\vec{j}]} \right\|_{\rm HS}^2 &= \sum_{[\vec{j}]\in \nz^d/\sim} |q_n(\vec{j})|^2 \dim(\fx_{[\vec{j}]})\\
			& = \sum_{[\vec{j}]\in \nz^d/\sim} |q_n(\vec{j})|^2 \sum_{\vec{k}\in [\vec{j}]} 1\\
			& = \sum_{[\vec{j}]\in \nz^d/\sim} \sum_{\vec{k}\in [\vec{j}]} |q_n(\vec{k})|^2\\
			& = \sum_{\vec{k}\in \nz^d} |q_n(\vec{k})|^2 = Q_n(2)^d.
	\end{align*}
Thus when $Q_n(2)<1$ we have by orthogonality again,
\[
		\|\Xi_n\|_{\rm HS}^2 = \sum_{d\geq 0} \left\|\sum_{[\vec{j}]\in \nz^d/\sim} q_n(\vec{j}) p_{[\vec{j}]} \right\|_{\rm HS}^2 = \sum_{d\geq 0} Q_n(2)^d = \frac{1}{1-Q_n(2)}. \qedhere
\]
\end{proof}

Let $HS(\fx_Q)$ denote the space of Hilbert--Schmidt operators on $\fx_Q$. Since $HS(\fx_Q)$ is a two-sided ideal in $\bx(\fx_Q)$, if $\Xi_n$ is Hilbert--Schmidt then $\partial_n^{(Q)}$ is valued in $HS(\fx_Q)$. Recall that $L^2(\Ga_Q\bar\otimes \Ga_Q^{op})$ is isomorphic to  $HS(\fx_Q)$ via the continuation of the map
	\[
	 	\Ga_Q\otimes\Ga_Q^{op}\ni a\otimes b\mapsto \lge \cdot, b^*\Om\rge_Q a\Om.
	\]
In particular, $1\otimes 1\mapsto p_\Om$. Thus, when $\Xi_n\in HS(\fx_Q)$, we regard $\partial_n^{(Q)}$ as a densely defined derivation
	\[
		\partial_n^{(Q)}\colon L^2(\Gamma_Q)\to L^2(\Ga_Q\bar\otimes \Ga_Q^{op}),
	\]
and identify $\Xi_n\in L^2(\Ga_Q\bar\otimes\Ga_Q^{op})$. Recalling that $\partial_j^{(Q)}(X_i^Q) = \delta_{i=j} \Xi_j$, we see that under this identification we have $\partial_n^{(Q)}(\cdot) =\partial_n(\cdot)\#\Xi_n$.

\begin{prop}
If $\Xi_n\in HS(\fx_Q)$, then $1\otimes 1\in \Dom(\partial_n^{(Q)*})$ with $\partial_n^{(Q)*}(1\otimes 1) = X_n^Q$.
\end{prop}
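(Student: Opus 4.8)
The plan is to show $1\otimes 1 \in \Dom(\partial_n^{(Q)*})$ by exhibiting an explicit vector $\xi$ such that $\langle \partial_n^{(Q)}(P), 1\otimes 1\rangle_{\tau_Q\otimes\tau_Q^{op}} = \langle P, \xi\rangle_{\tau_Q}$ for all $P \in \ps(X^Q)$, and then verifying $\xi = X_n^Q$. Under the identification $L^2(\Ga_Q\bar\otimes\Ga_Q^{op}) \cong HS(\fx_Q)$, the vector $1\otimes 1$ corresponds to the rank-one projection $p_\Om$, so the pairing $\langle \partial_n^{(Q)}(P), 1\otimes 1\rangle$ becomes the Hilbert--Schmidt inner product $\langle [P,r_n], p_\Om\rangle_{HS} = \langle [P,r_n]\Om, \Om\rangle_{\fx_Q}$. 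Since $r_n\Om = e_n$, this is $\langle Pr_n\Om - r_nP\Om, \Om\rangle_{\fx_Q} = \langle Pe_n, \Om\rangle_{\fx_Q} - \langle r_n P\Om, \Om\rangle_{\fx_Q}$. The second term vanishes because $r_n$ has no constant term in its range (its image lies in $\bigoplus_{d\geq 1}(\ell^2)^{\otimes d}$), i.e. $\langle r_n\eta, \Om\rangle = 0$ for all $\eta$. Hence the pairing equals $\langle Pe_n,\Om\rangle_{\fx_Q} = \langle P\, r_n\Om, \Om\rangle_{\fx_Q} = \langle r_n^*\, P^*\Om\,, \Om\rangle$... more cleanly: $\langle Pe_n, \Om\rangle_{\fx_Q} = \langle l_n^* P \Om, \Om \rangle$ is not quite it either; the correct move is $\langle P e_n, \Om\rangle_{\fx_Q} = \langle P\, l_n\Om,\Om\rangle_{\fx_Q}$, and since $\Om$ is the vacuum, $\langle P l_n \Om, \Om\rangle = \overline{\langle \Om, P l_n\Om\rangle} = \overline{\langle P^* \Om, l_n\Om\rangle}$.

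Rather than chase conjugates, I would argue directly: for $P \in \ps(X^Q)$,
\[
\langle \partial_n^{(Q)}(P), 1\otimes 1\rangle_{\tau_Q\otimes\tau_Q^{op}} = \langle [P, r_n]\Om, \Om\rangle_{\fx_Q} = \langle P r_n \Om, \Om\rangle_{\fx_Q} = \langle P e_n, \Om\rangle_{\fx_Q},
\]
using $r_n\Om = e_n$ and $\langle r_n P\Om, \Om\rangle_{\fx_Q} = 0$ (the range of $r_n$ is orthogonal to $\Om$). Now $e_n = X_n^Q \Om$ since $X_n^Q\Om = (l_n + l_n^*)\Om = e_n$. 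Therefore
\[
\langle P e_n, \Om\rangle_{\fx_Q} = \langle P X_n^Q \Om, \Om\rangle_{\fx_Q} = \langle P X_n^Q, 1\rangle_{\tau_Q} = \tau_Q(P X_n^Q) = \tau_Q(X_n^Q P) = \langle P, X_n^Q\rangle_{\tau_Q},
\]
where we used the trace property of $\tau_Q$ and self-adjointness of $X_n^Q$. This shows the functional $P \mapsto \langle \partial_n^{(Q)}(P), 1\otimes 1\rangle$ is bounded on $L^2(\Ga_Q)$ (being given by the bounded vector $X_n^Q$), so $1\otimes 1 \in \Dom(\partial_n^{(Q)*})$ and $\partial_n^{(Q)*}(1\otimes 1) = X_n^Q$.

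The only genuinely delicate point is the identification of the pairing $\langle \eta, 1\otimes 1\rangle_{\tau_Q\otimes\tau_Q^{op}}$ with the vacuum expectation $\langle \Xi\Om, \Om\rangle_{\fx_Q}$ of the corresponding Hilbert--Schmidt operator $\Xi$: one must check that under the isomorphism $a\otimes b \mapsto \langle \cdot, b^*\Om\rangle_Q\, a\Om$ described in the text, the HS inner product of $\Xi$ with the operator $p_\Om$ (the image of $1\otimes 1$) is exactly $\langle \Xi\Om, \Om\rangle_Q$; this is immediate since $\langle \Xi, p_\Om\rangle_{HS} = \Tr(p_\Om^* \Xi) = \langle \Xi\Om, \Om\rangle_Q$. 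With this bookkeeping in place the argument is essentially a two-line computation, so I expect no real obstacle — the main care is in matching conventions between the bimodule formalism of Subsection~\ref{bimodule_notations} and the Fock-space picture, and in noting that $\partial_n^{(Q)}(P)$ is genuinely Hilbert--Schmidt (hence the $L^2$ pairing makes sense) precisely because $HS(\fx_Q)$ is an ideal and $\Xi_n \in HS(\fx_Q)$, which is the standing hypothesis.
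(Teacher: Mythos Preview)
Your argument is correct and quite clean. The core computation
\[
\langle \partial_n^{(Q)}(P), 1\otimes 1\rangle_{\tau_Q\otimes\tau_Q^{op}} = \langle [P,r_n]\Om, \Om\rangle_Q = \langle P e_n, \Om\rangle_Q = \tau_Q(PX_n^Q) = \langle P, X_n^Q\rangle_{\tau_Q}
\]
is valid, and the bookkeeping you flag (that the unitary $a\otimes b \mapsto \langle\,\cdot\,, b^*\Om\rangle_Q\, a\Om$ intertwines the $\tau_Q\otimes\tau_Q^{op}$ inner product with the Hilbert--Schmidt inner product, so that pairing against $1\otimes 1$ becomes $\Tr(p_\Om\,\cdot\,) = \langle\,\cdot\,\Om,\Om\rangle_Q$) is exactly what needs checking and is straightforward.

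Your route differs somewhat from the one the paper cites. The paper defers to \cite{NZ15}*{Proposition 2}, whose proof evidently runs through the orthogonal decomposition $\Xi_n = \sum_{d\ge 0}\sum_{[\vec{j}]\in\nz^d/\sim} q_n(\vec{j}) p_{[\vec{j}]}$ and involves a summation over equivalence classes; the paper notes that in the infinite-generator setting this summation must be justified, and that the hypothesis $\Xi_n\in HS(\fx_Q)$ is what makes it converge. Your argument sidesteps that decomposition entirely by working directly with the commutator $[P,r_n]$ on the vacuum vector, so the only place the Hilbert--Schmidt hypothesis enters is to guarantee that $\partial_n^{(Q)}(P)$ lands in $HS(\fx_Q)\cong L^2(\Gamma_Q\bar\otimes\Gamma_Q^{op})$ so that the pairing is defined. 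This is a genuine simplification: it makes transparent that the result is really just the Fock-space identity $r_n\Om = e_n = X_n^Q\Om$ combined with $r_n^*\Om = 0$, with no need to unpack $\Xi_n$.

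One cosmetic remark: the exploratory paragraph preceding your clean argument (with the abandoned attempts involving $l_n^*$ and $\langle P^*\Om, l_n\Om\rangle$) should be deleted in a final write-up, but it does not affect correctness.
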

\begin{proof}
The argument is the same as the finite generator case; see \cite{NZ15}*{Proposition 2}. The only difference in the infinite generator case is that now we need to replace the summation over $[\vec j]\in [N]^d/\sim$ by $[\vec j]\in \nz^d/\sim$. This procedure is valid because we assumed $\Xi_n\in HS(\fx_Q)$.
\end{proof}

It follows from Proposition \ref{clos_deriv} that each $\partial_{n}^{(Q)}$ is closable and for $a\otimes b\in \ps(X^Q)\otimes \ps(X^Q)^{op}$ we have
	\begin{align}\label{Q_derivation_adjiont}
		\partial_n^{(Q)*}(a\otimes b) = aX_n^Q b - m\circ(1\otimes \tau_Q\otimes 1)\circ(1\otimes \partial_n^{(Q)} + \partial_n^{(Q)}\otimes 1)(a\otimes b);
	\end{align}
see \cite{NZ15}*{Corollary 3}.


\subsection{Mixed $q$-Gaussian conjugate variables}

We first show that $\Xi_n$ is invertible under some conditions. Recall that $q_\8 = \sup_{i,j\in\nz} |q_{i,j}|$.

\begin{lemma}\label{xipi}
Fix $n\in\nz$ and $R>\sup_k\|X_k^Q\|$. Suppose $2q_\infty +(R(1-q_\8)+1)Q_n(1/2)<1$. Then there exists a noncommutative power series $\Xi_n(T)\in \ps(T)\hat\otimes_R \ps(T)^{op}$ satisfying $\Xi_n(X^Q)=\Xi_n$ and
	\begin{align*}
		\| \Xi_n(T) - 1\otimes 1 \|_{R\otimes_\pi R} \le \frac{[(R(1-q_\8)+1) Q_n(\frac12)]^2}{(1-2q_\8)^2 -[(R(1-q_\8)+1) Q_n(\frac12)]^2} =: \pi(Q, n, R).
	\end{align*}
\end{lemma}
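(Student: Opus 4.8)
The plan is to follow the finite‑generator computation of \cite{NZ15}, the one genuinely new issue being convergence of the sums over generator indices, which will be controlled by $Q_n(\tfrac12)$ in place of $Q_n(1)$. As recorded just before the lemma, $\Xi_n$ acts on $\fx_{[\vec i]}$ as multiplication by $q_n(\vec i)=q_{ni_1}\cdots q_{ni_d}$, so under the hypothesis --- which forces $q_\8<\tfrac12$ and $Q_n(2)\le Q_n(\tfrac12)<1$ --- it is Hilbert--Schmidt and may be regarded as an element of $L^2(\Ga_Q\bar\otimes\Ga_Q^{op})\cong HS(\fx_Q)$ whose $d=0$ component is $p_\Om\leftrightarrow 1\otimes 1$. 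It therefore suffices to realize each $d$‑particle component $\Xi_n^{(d)}:=\sum_{[\vec i]\in\nz^d/\sim}q_n(\vec i)\,p_{[\vec i]}$, $d\ge 1$, as an element of $\ps(T)\hat\otimes_R\ps(T)^{op}$ and to estimate $\sum_{d\ge 1}\|\Xi_n^{(d)}\|_{R\otimes_\pi R}$.

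First I would reproduce the Wick‑word expansion of \cite{NZ15}: by \eqref{Q_derivation_adjiont} and the normal form of Wick products in terms of the $l_j,l_j^*$, $\Xi_n^{(d)}$ is rewritten as a sum, over $\vec i\in\nz^d$, of Wick polynomials in the $X_k^Q$ on the two tensor legs, each summand carrying the scalar weight $q_n(\vec i)$. The organizing device is to split $|q_n(\vec i)|=\prod_k|q_{ni_k}|^{1/2}\cdot\prod_k|q_{ni_k}|^{1/2}$ and to route one factor $\prod_k|q_{ni_k}|^{1/2}$ into the left leg and the other into the right leg before estimating. Since both legs of the $d$‑particle block involve the same $d$ indices, summing over $\vec i\in\nz^d$ after taking $\|\cdot\|_{R\otimes_\pi R}$ turns each leg's weight into $\big(\sum_j|q_{nj}|^{1/2}\big)^d=Q_n(\tfrac12)^d$, so the two legs together contribute $Q_n(\tfrac12)^{2d}$ rather than the $Q_n(1)^d$ one gets by leaving $q_n(\vec i)$ on a single term; this is exactly what makes the countable index set admissible.

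With the index sums tamed, I would bound $\|\Xi_n^{(d)}\|_{R\otimes_\pi R}$. The $\|\cdot\|_R$‑norm of a length‑$d$ Wick word is estimated using \eqref{lnorm} together with the Gram bound \eqref{pnorm} for $(P^{(d)})^{-1}$ (legitimate since the hypothesis forces $q_\8<\tfrac12$), and each of the $d$ letters contributes a factor $R$; carrying the constants through both legs yields $\|\Xi_n^{(d)}\|_{R\otimes_\pi R}\le\rho^d$ with $\rho:=\big(\tfrac{(R(1-q_\8)+1)Q_n(1/2)}{1-2q_\8}\big)^2$. The hypothesis $2q_\8+(R(1-q_\8)+1)Q_n(\tfrac12)<1$ is precisely the statement $\rho<1$, whence
\[
\|\Xi_n(T)-1\otimes 1\|_{R\otimes_\pi R}\le\sum_{d\ge 1}\rho^d=\frac{\rho}{1-\rho}=\pi(Q,n,R),
\]
so in particular $\Xi_n(T)\in\ps(T)\hat\otimes_R\ps(T)^{op}$. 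The identity $\Xi_n(X^Q)=\Xi_n$ then follows because the truncations of the series are genuine operator identities on each finitely generated subalgebra (by \cite{NZ15} and Remark \ref{polynomials_supported_finitely}) and the map $\ps(T)\hat\otimes_R\ps(T)^{op}\to\Ga_Q\bar\otimes\Ga_Q^{op}$ is norm‑decreasing, so convergence in $\|\cdot\|_{R\otimes_\pi R}$ forces convergence of the corresponding operators.

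I expect the real difficulty to lie in the Wick‑word step: one must carry out the expansion in a form in which the coefficients $q_n(\vec i)$ split evenly between the two legs without loss of sharpness, so that after the now‑infinite index summations one lands exactly on $Q_n(\tfrac12)$ and on the constants $R(1-q_\8)+1$ and $1-2q_\8$ rather than on cruder substitutes. The purely infinitary aspect --- upgrading the finite‑generator identities and bounds of \cite{NZ15} to the countable case --- is routine given Remark \ref{polynomials_supported_finitely} and the uniformity of the estimates above.
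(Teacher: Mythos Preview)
Your proposal is correct and follows essentially the same route as the paper. The paper makes explicit what you call the ``Wick--word expansion'': it writes each projection $p_{[\vec j]}$ as $\sum_{\vec k,\vec l\in[\vec j]}(G_{[\vec j]}^{-1})_{\vec l\vec k}\,\psi_{\vec k}(T)\otimes\psi_{\vec l}(T)^*$ via an orthonormal basis of $\fx_{[\vec j]}$, then uses the Wick--word bound $\|\psi_{\vec j}(T)\|_R\le(R+\tfrac{1}{1-q_\infty})^d$ from \cite{Dab} and the entrywise Gram bound $|(G_{[\vec j]}^{-1})_{\vec l\vec k}|\le(\tfrac{1-q_\infty}{1-2q_\infty})^d$ from \eqref{pnorm}; the square--root splitting of $|q_n(\vec j)|$ onto the $\vec k$ and $\vec l$ sums is exactly your trick and yields $Q_n(\tfrac12)^{2d}$. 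Two minor remarks: the reference to \eqref{Q_derivation_adjiont} is not what drives the expansion (it is the Gram--matrix inverse that does), and \eqref{pnorm} bounds the Gram entries rather than the Wick words themselves---but these are misplaced citations, not gaps.
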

\begin{proof}
We closely follow the argument in \cites{Dab, NZ15}. For $\vec{j}\in \nz^d$, let $\psi_{\vec{j}}(T)\in \ps(T)$ be such that $\psi_{\vec{j}}:=\psi_{\vec{j}}(X^Q)$ is the Wick word corresponding to $e_{\vec j}:=e_{j_1}\otimes\cdots \otimes e_{j_d}$. Namely,
\[
\psi_{\vec j} = W(e_{j_1}\otimes\cdots \otimes e_{j_d});
\]
see \cite{NZ15}*{(5)}. In view of \eqref{HS}, we want to write each $p_{[\vec j]}$ as a sum of tensor products. Note that $\fx_{[\vec j]}$ is finite dimensional, and that $\lge e_{\vec i},e_{\vec j}\rge_Q$ equals zero unless $\vec i \sim \vec j$. Let $G_{[\vec j]}$ denote the Gram matrix of the natural basis $(e_{i_1}\otimes\cdots\otimes e_{i_d})$ of $\fx_{[\vec j]}$ in the inner product $\lge\cdot, \cdot\rge_Q$. In other words, $G_{[j]}$ is the matrix of $P^{(d)}$ restricted to $\fx_{[j]}$, where $P^{(d)}$ is defined in \eqref{pninn}. Since $P^{(d)}$ is strictly positive, we may define $B([\vec{j}]) = G_{[\vec j]}^{-1/2}$. For $\vec i \in [\vec j]$, let
\[
f_{\vec i}(T)= \sum_{\vec k \in [\vec j]} B([\vec j])_{\vec i \vec k}\psi_{\vec k}(T).
\]
It is straightforward to check that $\{f_{\vec{i}}(X^Q)\Om \}_{\vec i\in [\vec j ]}$ is an orthonormal basis of $\fx_{[\vec j]}$. Let us define
\begin{align*}
p_{[\vec j]}(T)& = \sum_{\vec i\in[\vec j]} f_i(T)\otimes f_i(T)^*\\
&= \sum_{\vec i, \vec k, \vec l\in [\vec j]} B([\vec j])_{\vec i \vec k} B([\vec j])_{\vec l \vec i} \psi_{\vec k}(T) \otimes \psi_{\vec l}(T)^* = \sum_{\vec{k},\vec{l}\in [\vec{j}]} (B([\vec j])^2)_{\vec{l}\vec{k}} \psi_{\vec{k}}(T)\otimes \psi_{\vec{l}}(T)^*.
\end{align*}
Under the identification of $HS(\fx_Q)$ and $L^2(\Ga_Q\bar\otimes \Ga_Q^{op})$, we have
	\begin{align*}
	p_{[\vec j]} = p_{[\vec j]}(X^Q)	 = \sum_{\vec{k},\vec{l}\in [\vec{j}]} (B([\vec j])^2)_{\vec{l}\vec{k}} \psi_{\vec{k}}\otimes \psi_{\vec{l}}^*,
	\end{align*}
and now $\Xi_n$ can be written as a sum of tensors. We consider
\[
	\Xi_n(T) :=  \sum_{d\ge 0} \sum_{[\vec j ] \in \nz^d/\sim} q_{n}(\vec j) p_{[\vec j]} (T) = \sum_{d\ge 0} \sum_{[\vec j ] \in \nz^d/\sim} q_{n}(\vec j) \sum_{\vec{k},\vec{l}\in [\vec{j}]} (B([\vec j])^2)_{\vec{l}\vec{k}} \psi_{\vec{k}}(T)\otimes \psi_{\vec{l}}(T)^*.
\]
As in the proof of \cite{Dab}*{Corollary 29}, we have for $\vec{j}\in \nz^d$
	\begin{align*}
		\sup_{ |\vec j|=d}\|\psi_{\vec{j}}(T)\|_R \leq \left( R+ \frac{1}{1-q_\infty}\right)^d,
	\end{align*}
Also, $P^{(n)}$ is block diagonal so entrywise we have the bound for $\vec{k}, \vec{l}\in [\vec j]$ from \eqref{pnorm}
	\begin{align*}
		| (B([\vec j])^2)_{\vec{l}\vec{k}}| \le \|G_{[\vec j]}^{-1}\|\le \|(P^{(d)})^{-1}\| \leq \left( \frac{1-q_\infty}{1-2q_\infty}\right)^d,
	\end{align*}
where we note that $q_\8<\frac12$ is implied by the hypothesis. Then we estimate
	\begin{align*}
		\left\|\sum_{[\vec{j}]\in \nz^d/\sim} q_n(\vec{j})\right. &\left. \sum_{\vec{k},\vec{l}\in [\vec{j}]} (B([\vec j])^2)_{\vec{l}\vec{k}} \psi_{\vec{k}}(T)\otimes \psi_{\vec{l}}(T)^* \right\|_{R\otimes_\pi R}\\
			&\leq \sum_{[\vec{j}]\in \nz^d/\sim} |q_n(\vec{j})| \sum_{\vec{k}, \vec{l}\in[\vec{j}]} | (B([\vec j])^2)_{\vec{l}\vec{k}}| \left( R+ \frac{1}{1-q_\infty}\right)^{2d}\\
			&\le  \left( R+ \frac{1}{1-q_\infty}\right)^{2d} \left( \frac{1-q_\infty}{1-2q_\infty}\right)^d \sum_{[\vec{j}]\in \nz^d/\sim} \sum_{\vec{k}\in [\vec{j}]} \sqrt{|q_n(\vec{k})|} \sum_{\vec{l}\in [\vec{j}]} \sqrt{|q_n(\vec{l})|}\\
			&\leq \left( R+ \frac{1}{1-q_\infty}\right)^{2d} \left( \frac{1-q_\infty}{1-2q_\infty}\right)^d \sum_{\vec{k}\in \nz^d} \sqrt{|q_n(\vec{k})|}  \sum_{\vec{l}\in \nz^d} \sqrt{|q_n(\vec{l})|}\\
			&\le \left[\frac{R(1-q_\8)+1}{1-2q_\8}Q_n\left(1/2\right) \right]^{2d}.
	\end{align*}
The hypothesis implies this quantity is less than one, and therefore summing over $d\geq 1$ we have convergence according to a geometric series.
\end{proof}

\begin{rem}\label{one_hyp_to_rule_them_all}
  If $0<\pi(Q,n,R)<1$ for all $n\in\nz$, then $q_\8<\frac12$, $Q_n(2)<1$, and $2q_\infty +(R(1-q_\8)+1)Q_n(\frac12)<1$ for all $n\in\nz$. To see this, we claim that
  \[
  1-2q_\8 + (R-Rq_\8+1)Q_n(\frac12)>0, \quad \text{for some  } n.
  \]
  Indeed, suppose this is not true. Then $Q_n(\frac12)-q_\8<0$ for all $n\in\nz$. But $\sup_{n\in\nz} Q_n(\frac12)>\sup_{i,j} |q_{ij}|=q_\8$, a contradiction. From the claim and $\pi(Q,n,R)>0$, we deduce $1-2q_\8 - (R-Rq_\8+1)Q_n(\frac12)>0$ for some $n$. It follows then $q_\8<\frac12$. This yields finally $2q_\infty +(R(1-q_\8)+1)Q_n(\frac12)<1$ and $Q_n(2)\le Q_n(\frac12)<1$ for all $n\in\nz$.
\end{rem}

If $\pi(Q,n,R)<1$, Lemma \ref{xipi} implies that $\Xi_n(T)$ is invertible in $\ps(T)\hat\otimes_R\ps(T)^{op}$ with inverse $\Xi_n^{-1}(T)$. Since $\|\cdot\|_{R\otimes_\pi R}$ dominates the operator norm $\|\cdot\|_{\Gamma_Q\bar{\otimes}\Gamma_Q}$, it follows that $\Xi_n$ is then invertible in $\Ga_Q\bar\otimes\Ga_Q^{op}$ with $\Xi_n^{-1}=\Xi_n^{-1}(X^Q)$. We will obtain the conjugate variables to the free difference quotients as the image of $(\Xi_n^{-1})^*$ under the adjoint $\partial_n^{(Q)*}$, but will need to show $(\Xi_n^{-1})^*\in \Dom(\partial_n^{(Q)*})$.

\begin{prop}\label{conju_exist}
Let $R>\sup_n \|X_n^Q\|$. Suppose $0<\pi(Q,n, R)< 1$ for all $n\in\nz$. Then we have
\begin{enumerate}
\item[(i)] There exist self-adjoint noncommutative power series $\xi_n(T)\in \ps(T)^{(R)}$ for each $n\in \nz$ such that $\{\xi_n(X^Q)\}_{n\in\nz }$ are the conjugate variables of $X^Q$.
\item[(ii)] For each $n\in \nz$,
	\[
		\|\xi_n(T) - T_n\|_R \leq \left(R+\frac{2\|\Xi_n(T)\|_{R\otimes_\pi R}}{R-\sup_k \|X_k^Q\|}\right) \frac{\pi(Q, n,R)}{1-\pi(Q,n,R)},
	\]
which tends to zero as $Q_n(1/2)\to 0$.
\end{enumerate}
\end{prop}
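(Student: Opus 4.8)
The plan is to build $\xi_n$ from the new derivation $\partial_n^{(Q)}$ and its adjoint, exactly as in \cite{Dab, NZ15}, but carefully tracking that the relevant objects remain honest power series in $\ps(T)^{(R)}$ rather than merely $L^2$-vectors. First I would record from Lemma \ref{xipi} and Remark \ref{one_hyp_to_rule_them_all} that under the hypothesis $0<\pi(Q,n,R)<1$ we have $q_\8<\tfrac12$, $\Xi_n\in HS(\fx_Q)$, and $\Xi_n(T)\in \ps(T)\hat\otimes_R\ps(T)^{op}$ is invertible with $\|\Xi_n(T)-1\otimes 1\|_{R\otimes_\pi R}\le\pi(Q,n,R)$. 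From the Neumann series for $\Xi_n^{-1}(T)=\sum_{k\ge 0}(1\otimes 1-\Xi_n(T))^k$ one gets
\[
\|\Xi_n^{-1}(T)-1\otimes 1\|_{R\otimes_\pi R}\le \frac{\pi(Q,n,R)}{1-\pi(Q,n,R)}.
\]
Since the three involutions are isometric on $\ps(T)\hat\otimes_R\ps(T)^{op}$, the same bound holds for $(\Xi_n^{-1}(T))^*$. Note $\|\Xi_n(T)-1\otimes 1\|_{R\otimes_\pi R}<1$ also shows $\Xi_n(T)$, hence $\Xi_n$, is self-adjoint-ish in the sense needed; more precisely $\Xi_n$ is a positive operator, so $(\Xi_n^{-1})^*=\Xi_n^{-1}$ and $(\Xi_n^{-1}(T))^*=\Xi_n^{-1}(T)$ — this is what will make the resulting $\xi_n$ self-adjoint.

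For (i): the key identity is $\partial_n^{(Q)}(\cdot)=\partial_n(\cdot)\#\Xi_n$ (established just before the proposition), so for $P\in\ps(X^Q)$ one computes, using that $\Xi_n^{-1}(X^Q)\in\Ga_Q\bar\otimes\Ga_Q^{op}$,
\[
\langle \partial_n P,\,1\otimes 1\rangle_{\tau_Q\otimes\tau_Q^{op}}
=\langle \partial_n P\#\Xi_n,\,(\Xi_n^{-1})^*\rangle_{\tau_Q\otimes\tau_Q^{op}}
=\langle \partial_n^{(Q)}P,\,(\Xi_n^{-1})^*\rangle_{\tau_Q\otimes\tau_Q^{op}}.
\]
Thus it suffices to show $(\Xi_n^{-1})^*\in\Dom(\partial_n^{(Q)*})$ and then set $\xi_n:=\partial_n^{(Q)*}((\Xi_n^{-1})^*)$, giving $\langle P,\xi_n\rangle_{\tau_Q}=\langle\partial_n P,1\otimes 1\rangle_{\tau_Q\otimes\tau_Q^{op}}$. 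To see $(\Xi_n^{-1})^*$ is in the domain and the image is a power series, I would invoke the adjoint formula \eqref{Q_derivation_adjiont}: since $(\Xi_n^{-1}(T))^*\in\ps(T)\hat\otimes_R\ps(T)^{op}$ and $\partial_n^{(Q)}$ acts boundedly there (it is $\partial_n(\cdot)\#\Xi_n(T)$, and $\partial_n$ is bounded $\ps(T)^{(R)}\to\ps(T)\hat\otimes_S\ps(T)^{op}$ by the argument of Lemma \ref{free_difference_quotients_bounded} while $\#\Xi_n(T)$ is bounded), the right-hand side of \eqref{Q_derivation_adjiont} converges in $\|\cdot\|_R$ when applied to $(\Xi_n^{-1}(T))^*$; this defines $\xi_n(T)\in\ps(T)^{(R)}$ with $\xi_n(X^Q)=\xi_n$. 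Self-adjointness of $\xi_n(T)$ follows because $(\Xi_n^{-1}(T))^*$ equals $(\Xi_n^{-1}(T))^{\diamond\dagger}$ and $\partial_n^{(Q)*}$ intertwines $\dagger$ with $*$; alternatively one checks it from the explicit formula using $(\Xi_n^{-1}(T))^*=\Xi_n^{-1}(T)$ and $\Xi_n(T)^\diamond=\Xi_n(T)$ (the latter from the symmetric tensor form of $p_{[\vec j]}(T)$). Since these are genuine conjugate variables for $X^Q$, Corollary \ref{R_embedding} applies and $\ps(X^Q)^{(R)}$ with its norm is well-defined.

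For (ii): apply \eqref{Q_derivation_adjiont} with $\eta=(\Xi_n^{-1}(T))^*=\sum_i A_i\otimes B_i$. The formula gives
\[
\xi_n(T)=\eta\#T_n - m\circ(1\otimes\tau_Q\otimes 1)\circ(1\otimes\partial_n^{(Q)}+\partial_n^{(Q)}\otimes 1)(\eta),
\]
and since $T_n$ corresponds to $X_n^Q$ with $\|X_n^Q\|\le R$, the term $\eta\#T_n$ contributes at most $R\,\|\eta\|_{R\otimes_\pi R}$ after subtracting the $1\otimes 1$ part which yields exactly $T_n$; that is, $\|\eta\#T_n-T_n\|_R\le R\|\eta-1\otimes 1\|_{R\otimes_\pi R}$. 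For the ``lower order'' term one uses $\partial_n^{(Q)}(\cdot)=\partial_n(\cdot)\#\Xi_n$ together with Lemma \ref{1_tensor_tau} (applied to $(1\otimes\tau_Q)\partial_n$ and $(\tau_Q\otimes 1)\partial_n$) and the boundedness of $\#\Xi_n(T)$ by $\|\Xi_n(T)\|_{R\otimes_\pi R}$, producing a factor $\tfrac{2\|\Xi_n(T)\|_{R\otimes_\pi R}}{R-\sup_k\|X_k^Q\|}$ times $\|\eta-1\otimes 1\|_{R\otimes_\pi R}$ (the $1\otimes 1$ part of $\eta$ is killed since $\partial_n^{(Q)}$ annihilates scalars). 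Combining with the Neumann-series bound $\|\eta-1\otimes 1\|_{R\otimes_\pi R}\le \pi(Q,n,R)/(1-\pi(Q,n,R))$ gives the stated estimate. Finally, $Q_n(1/2)\to 0$ forces $\pi(Q,n,R)\to 0$ hence the bound $\to 0$; one also notes $\|\Xi_n(T)\|_{R\otimes_\pi R}\le 1+\pi(Q,n,R)$ stays bounded, so there is no hidden blow-up.

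\textbf{Main obstacle.} The substantive point — and the place where the infinite-generator setting could in principle bite — is verifying that $(\Xi_n^{-1})^*$ genuinely lies in $\Dom(\partial_n^{(Q)*})$ with image in $\ps(T)^{(R)}$, i.e. that the series defining $\Xi_n$ (a sum over all equivalence classes $[\vec j]\in\nz^d/\!\sim$ for all $d$, an infinite sum in the infinitely-many-generators case) can be manipulated term-by-term under $\partial_n^{(Q)*}$ without divergence. This is exactly what Lemma \ref{xipi}'s $\|\cdot\|_{R\otimes_\pi R}$-convergence is designed to control, so the argument of \cite{NZ15}*{Proposition 2, Corollary 3} goes through verbatim once one has that lemma; the only care needed is replacing sums over $[N]^d/\!\sim$ by sums over $\nz^d/\!\sim$ and citing the projective-norm bounds to justify interchange of summation, as already flagged in the proof of the preceding proposition.
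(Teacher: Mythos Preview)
Your approach matches the paper's: build $\xi_n$ as $\partial_n^{(Q)*}((\Xi_n^{-1})^*)$, verify domain membership via the adjoint formula \eqref{Q_derivation_adjiont} and the $\|\cdot\|_{R\otimes_\pi R}$-convergence from Lemma \ref{xipi}, then read off the estimate in (ii) by linearity together with $\partial_n^{(Q)*}(1\otimes 1)=X_n^Q$. The paper makes the approximation/closedness step explicit (approximate $(\Xi_n^{-1}(T))^*$ by polynomial tensors $\eta_k$, bound $\|P_k-P_l\|_R$ via the formal adjoint formula, and use that $\partial_n^{(Q)*}$ is closed), but this is precisely what your convergence argument amounts to.

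Two corrections on the self-adjointness step. First, $\Xi_n$ need not be positive: the coefficients $q_n(\vec i)=\prod_k q_{n i_k}$ can be negative when some $q_{ij}<0$, so $\Xi_n$ is merely self-adjoint as a Hilbert--Schmidt operator. Second, and more to the point, self-adjointness as an HS operator corresponds to invariance under $\dagger$, not under $*$ or $\diamond$: each summand of $\Xi_n(T)$ has the form $f_{\vec i}\otimes f_{\vec i}^*$ and $(f\otimes f^*)^\dagger=f\otimes f^*$, so $\Xi_n(T)^\dagger=\Xi_n(T)$, hence $(\Xi_n^{-1}(T))^\dagger=\Xi_n^{-1}(T)$ and therefore $((\Xi_n^{-1}(T))^*)^\dagger=(\Xi_n^{-1}(T))^*$. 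Your claims $(\Xi_n^{-1}(T))^*=\Xi_n^{-1}(T)$ and $\Xi_n(T)^\diamond=\Xi_n(T)$ are false in general (and the identity $(\cdot)^{\diamond\dagger}=(\cdot)^*$ holds for every element, so it carries no information). The intertwining you correctly state, $[\partial_n^{(Q)*}(\eta)]^*=\partial_n^{(Q)*}(\eta^\dagger)$, applied to the $\dagger$-fixed vector $\eta=(\Xi_n^{-1})^*$, is exactly how the paper obtains $\xi_n^*=\xi_n$.

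A smaller point on (ii): you cannot literally apply Lemma \ref{1_tensor_tau} and then separately multiply by $\|\Xi_n(T)\|_{R\otimes_\pi R}$, since $(1\otimes\tau_Q)(\partial_n A\#\zeta)$ does not factor through $(1\otimes\tau_Q)(\partial_n A)$. The paper instead re-runs the monomial computation of Lemma \ref{1_tensor_tau} with the extra tensor $\zeta$ inserted, obtaining
\[
\|(1\otimes\tau_{X^Q})(\partial_{T_n}(A)\#\zeta)\|_R\le \frac{\|\zeta\|_{R\otimes_\pi R}}{R-\sup_k\|X_k^Q\|}\,\|A\|_R,
\]
which is the bound you need and follows by the same proof.
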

\begin{proof}
We first claim that if $\eta\in \ps(T)\otimes\ps(T)^{op}$, then $\partial_n^{(Q)*}(\eta(X^Q))=P(X^Q)$ for some $P\in \ps(T)^{(R)}$ such that
	\[
		\|\partial_n^{(Q)*}(\eta(X^Q))\|\leq \| P\|_R \leq \left(R+ \frac{2\|\Xi_n(T)\|_{R\otimes_\pi R}}{R- \sup_n\|X_n^Q\|}\right)\|\eta\|_{R\otimes_\pi R}.
	\]
Let $\partial_{T_n}^{(Q)}(\cdot)=\partial_{T_n}(\cdot)\# \Xi_n(T)$, and let $\tau_{X^Q}\colon \ps(T)\to \cz$ denote the joint law of $X^Q$ with respect to $\tau_Q$. Define
	\begin{align}\label{formal_Q_adjoint}
		P:=\eta\# T_n - m\circ (1\otimes \tau_{X^Q}\otimes 1)\circ(1\otimes\partial_{T_n}^{(Q)}+\partial_{T_n}^{(Q)}\otimes 1)(\eta),
	\end{align}
so that by (\ref{Q_derivation_adjiont}) $P(X^Q)=\partial_n^{(Q)^*}(\eta(X^Q))$. Now, using the same argument as in Lemma \ref{1_tensor_tau} it is easy to see that for $A\in \ps(T)$ and $\zeta\in \ps(T)\otimes \ps(T)^{op}$
	\[
		\|(1\otimes \tau_{X^Q})(\partial_{T_n}(A)\#\zeta)\|_R \leq \frac{1}{R-\sup_n \|X_n^Q\|} \|A\|_R\|\zeta\|_{R\otimes_\pi R}.
	\]
Thus this bound extends to $A\in \ps(T)^{(R)}$ and $\zeta\in \ps(T)\hat\otimes_R\ps(T)^{op}$. In particular, we have
	\[
		\|(1\otimes \tau_{X^Q})(\partial_{T_n}^{(Q)} A)\|_R \leq \frac{\|\Xi_n(T)\|_{R\otimes_\pi R}}{R-\sup_n \|X_n^Q\|}\|A\|_R.
	\]
The same estimate holds for $(\tau_{X^Q}\otimes 1)\circ\partial_{T_n}^{(Q)}$, and so proceeding as in Lemma \ref{free_difference_quotient_adjoint_bounded} we obtain the claimed upper bound for $\|P\|_R$. The lower bound holds simply because the operator norm in $\Ga_Q$ is dominated by $\|\cdot\|_R$.

We next claim that for each $n\in \nz$, we have $(\Xi_n^{-1})^*\in \Dom(\partial_n^{(Q)*})$. Indeed, recall that $\ps(T)\otimes\ps(T)^{op}$ is dense in $\ps(T)\hat\otimes_R\ps(T)^{op}$. Then by Lemma \ref{xipi}, $\pi(Q,n,R)<1$ implies that $\Xi_n^{-1}\in \Ga_Q\bar\otimes\Ga_Q^{op}$ and there is a sequence $(\eta_k)_{k\in\nz}\subset \ps(T)\otimes\ps(T)^{op}$ such that
	\[
		\|\Xi_n^{-1} - \eta_k(X^Q)\|\leq \|\Xi_n^{-1}(T) - \eta_k\|_{R\otimes_\pi R} \stackrel{k\to\8}{\longrightarrow} 0.
	\]
Since $*$ is an isometry in both $\Ga_Q\bar\otimes \Ga_Q^{op}$ and $\ps(T)\hat\otimes_R\ps(T)^{op}$, we can assume that $(\eta_k(X^Q))_{k\in \nz}$ actually approximates $(\Xi_n^{-1})^*$. By the previous claim we then have a sequence $(P_k)_{k\in\nz}\subset \ps(T)^{(R)}$ such that
	\[
		\|\partial_n^{(Q)^*}(\eta_k(X^Q)) - \partial_n^{(Q)^*}(\eta_l(X^Q))\|\leq \|P_k - P_l\|_R \leq \left(R+ \frac{2\|\Xi_n(T)\|_{R\otimes_\pi R}}{R- \sup_n\|X_n^Q\|}\right)\| \eta_k - \eta_l\|_{R\otimes_\pi R}.
	\]
Thus $\left( \partial_n^{(Q)^*}(\eta_k(X^Q))\right)_{k\in \nz}$ converges in $\Ga_Q$, which implies $(\Xi_n^{-1})^*\in \Dom(\partial_n^{(Q)*})$ since $\partial_n^{(Q)*}$ is closed, and $(P_k)_{k\in \nz}$ converges to some $\xi_n(T)\in \ps(T)^{(R)}$.

Recall that $\partial_n^{(Q)}(\cdot) = \partial_n(\cdot)\# \Xi_n$, or $\partial_n(\cdot)= \partial_n^{(Q)}(\cdot)\# \Xi_n^{-1}$. For $P\in \ps(X^Q)$ we have
	\[
		\lge\xi_n(X^Q), P\rge_{\tau_Q} = \lge (\Xi_n^{-1})^*, \partial_n^{(Q)} P\rge_{\tau_Q\otimes\tau_Q^{op}} = \lge 1\otimes 1, \partial_n(P)\rge_{\tau_Q\otimes\tau_Q^{op}}.
	\]
Thus $\{\xi_n(X^Q)\}_{n\in \nz}$ are the conjugate variables.

One can verify that $\xi_n(T)$ is self-adjoint directly from (\ref{formal_Q_adjoint}) using that $\Xi_n(T)^\dagger=\Xi_n(T)$ (and thus $(\Xi_n^{-1}(T)^*)^\dagger = \Xi_n^{-1}(T)^*$). Alternatively, one can check that $\left[\partial_n^{(Q)^*}(\eta)\right]^*=\partial_n^{(Q)^*}(\eta^\dagger)$ (either weakly or using (\ref{Q_derivation_adjiont})). Then $\Xi_n^\dagger=\Xi_n$ implies $\xi_n(X^Q)=\partial_n^{(Q)^*}([\Xi_n^{-1}]^*)$ is self-adjoint. Since we now know $X^Q$ has conjugate variables, $\xi_n(X^Q)$ being self-adjoint implies $\xi_n(T)$ is self-adjoint since the ``evaluation in $(X_k^Q)_{k\in \nz}$'' map is an embedding by Corollary \ref{R_embedding}.

To see (ii), we recall that $\partial_n^{(Q)*}(1\otimes 1) = X_n^Q = T_n(X^Q)$. So by the initial claim
	\[
		\|\xi_n(T) - T_n\|_R \leq \left(R+ \frac{2\|\Xi_n(T)\|_{R\otimes_\pi R}}{R- \sup_n\|X_n^Q\|}\right)\| \Xi_n^{-1}(T)^* - 1\otimes 1\|_{R\otimes_\pi R}.
	\]
We can then bound $\|\Xi_n^{-1}(T)^* -1\otimes 1\|_{R\otimes_\pi R}=\|\Xi_n^{-1}(T) - 1\otimes 1\|_{R\otimes_\pi R}$ using Lemma \ref{xipi}.
\end{proof}


\subsection{Isomorphism of $\Gamma_Q$ and $L(\fz_\8)$}

We use Proposition \ref{conju_exist} to show that Theorem \ref{transport_thm} can be applied to $\Gamma_Q$ provided the structure array $Q$ satisfies a certain ``smallness'' condition.

\begin{theorem}\label{perturbation_formula}
Let $R>\sup_n \|X_n^Q\|$. Suppose $0< \pi(Q,n,R)<1$ for all $n\in\nz$, and that
	\[
		\sum_{n\in\nz}\frac{\pi(Q,n,R)}{1-\pi(Q,n,R)}<\infty.
	\]
With $\xi_n = \xi_n(X^Q)$ as in Proposition \ref{conju_exist}, let
	\begin{align*}
		W=\Sigma\left( \frac{1}{2}\sum_{n\in \nz} X_n^Q(\xi_n -X_n^Q) + (\xi_n - X_n^Q)X_n^Q\right).
	\end{align*}
Then	$W=W^*$, $\ds_n W = \xi_n - X_n^Q$ for each $n\in \nz$,
	\[
		\|W\|_R \leq \frac{1}{2} R\left( R+ \frac{4}{R-\sup_n \|X_n^Q\|}\right)\sum_{n\in \nz} \frac{\pi(Q,n,R)}{1-\pi(Q,n,R)}<\infty,
	\]
and the joint law of the $X_n^Q$ with respect to $\tau_Q$ is a free Gibbs state with perturbation $W$.
\end{theorem}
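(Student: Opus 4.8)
The plan is to verify the four assertions in turn: that $W$ is a well-defined self-adjoint element of $\ps(T)^{(R)}$ obeying the stated bound; that $\ds_n W=\xi_n-X_n^Q$ for every $n$; and then that the Schwinger--Dyson equation \eqref{Schwinger-Dyson_equation} follows as an immediate corollary.

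\textbf{Membership, self-adjointness, and the bound.} Set $\eta_n(T):=\xi_n(T)-T_n\in\ps(T)^{(R)}$, which is self-adjoint by Proposition~\ref{conju_exist}(i). Lemma~\ref{xipi} gives $\|\Xi_n(T)\|_{R\otimes_\pi R}\le 1+\pi(Q,n,R)<2$, so Proposition~\ref{conju_exist}(ii) yields $\|\eta_n(T)\|_R\le\bigl(R+\tfrac{4}{R-\sup_k\|X_k^Q\|}\bigr)\tfrac{\pi(Q,n,R)}{1-\pi(Q,n,R)}$. Since $\|T_n\|_R=R$ and $\|\cdot\|_R$ is submultiplicative, $\|T_n\eta_n(T)+\eta_n(T)T_n\|_R\le 2R\|\eta_n(T)\|_R$, and the summability hypothesis makes $\tfrac12\sum_n\bigl(T_n\eta_n(T)+\eta_n(T)T_n\bigr)$ converge absolutely in $\ps(T)^{(R)}$; applying $\Sigma$ puts $W$ in $\ps(T)^{(R)}$, and by Corollary~\ref{R_embedding} (conjugate variables exist by Proposition~\ref{conju_exist}) it defines an element of $\ps(X^Q)^{(R)}$ of the same norm. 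To get the sharp factor $\tfrac12$ in the stated bound, I would note that in \eqref{formal_Q_adjoint} each Wick word $\psi_{\vec k}(T)$ is a sum of monomials whose degrees have the parity of $|\vec k|$, which forces $\xi_n(T)$ — hence $\eta_n(T)$ — to be a sum of \emph{odd}-degree monomials; consequently $T_n\eta_n(T)+\eta_n(T)T_n$ has no term of degree below $2$, so $\Sigma$ acts on $\tfrac12\sum_n\bigl(T_n\eta_n(T)+\eta_n(T)T_n\bigr)$ with norm $\le\tfrac12$, giving $\|W\|_R\le\tfrac12 R\sum_n\|\eta_n(T)\|_R$ and the claimed estimate. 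Self-adjointness of $W$ is then immediate, since $\Sigma$ commutes with the degree-preserving $*$-operation and each $\eta_n(T)$ is self-adjoint.

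\textbf{The identity $\ds_n W=\xi_n-X_n^Q$.} I would prove $\ds_n W=\eta_n(T)$ degree by degree. Writing $\eta_m(T)=\sum_{e\ge1}\eta_m^{(e)}$ for the homogeneous components, one has $W=\sum_{e\ge1}\tfrac1{e+1}\cdot\tfrac12\sum_m\bigl(T_m\eta_m^{(e)}+\eta_m^{(e)}T_m\bigr)$, so a Leibniz-type computation as in the proof of Lemma~\ref{last_SD_lemma} (using $\ds_k=m\circ\diamond\circ\partial_k$ and $\partial_k T_m=\delta_{k=m}1\otimes1$) gives
\[
	\ds_n W=\sum_{e\ge1}\frac1{e+1}\Bigl(\eta_n^{(e)}+\sum_m(\partial_n\eta_m^{(e)})^\diamond\#T_m\Bigr),
\]
the sums converging in $\|\cdot\|_R$ by Lemma~\ref{cyclic_derivative_bounded} and Lemma~\ref{free_difference_quotients_bounded}. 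The elementary identity $\sum_m(\partial_m P)\#T_m=\ns P$ on monomials, combined with the symmetry $(\partial_n\eta_m)^\diamond=\partial_m\eta_n$ (equivalently $(\partial_n\xi_m)^\diamond=\partial_m\xi_n$, since $\partial_k T_m$ is $\diamond$-fixed), turns the inner sum into $\ns\eta_n^{(e)}=e\,\eta_n^{(e)}$, whence $\ds_n W=\sum_{e\ge1}\eta_n^{(e)}=\eta_n(T)$; evaluating in $X^Q$ via Corollary~\ref{R_embedding} yields $\ds_n W=\xi_n-X_n^Q$. The symmetry $(\partial_n\xi_m)^\diamond=\partial_m\xi_n$ of the mixed free difference quotients of the conjugate variables is the substantive input: it is a general feature of conjugate variables, underlying \cite{GS14} and the corresponding computations of \cite{NZ15} through the relation $(\js\ds(\cdot))^\diamond=\js\ds(\cdot)$ of Lemma~\ref{involutions_of_J}, and by Remark~\ref{polynomials_supported_finitely} and the estimates of Section~\ref{Notation} the finite-variable argument transfers to the present setting.

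\textbf{Free Gibbs state.} With $\ds_n W=\xi_n-X_n^Q$ established this is immediate: for every $P\in\ps(X^Q)$ and $n\in\nz$,
\[
	\tau_Q\bigl([X_n^Q+\ds_n W]P\bigr)=\tau_Q(\xi_n P)=\tau_Q\otimes\tau_Q^{op}(\partial_n P),
\]
the last equality being the defining property of the conjugate variable $\xi_n$ from Proposition~\ref{conju_exist} (rewritten using traciality and $\partial_n(P)^\dagger=\partial_n(P^*)$, exactly as in Section~\ref{Construction}). Hence $\tau_{X^Q}$ is a free Gibbs state with perturbation $W$. I expect the symmetry of the mixed free difference quotients of the conjugate variables — together with the bookkeeping needed to see the relevant infinite sums converge in $\|\cdot\|_R$ — to be the only genuinely delicate points; the rest is a transcription of the finite-variable argument powered by the bounds of Section~\ref{Notation}.
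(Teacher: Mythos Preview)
Your proposal is correct and follows essentially the same route as the paper. Both arguments hinge on the symmetry $(\partial_n\xi_m)^\diamond=\partial_m\xi_n$ (the paper cites \cite[Lemma~36]{Dab} for this), compute $\ds_n$ of the symmetrized sum via $\ds_n(AB)=\partial_n(A)^\diamond\#B+\partial_n(B)^\diamond\#A$, and then collapse $\sum_m\partial_m(\eta_n)\#T_m$ to $\ns\eta_n$; the paper packages this through the identity $\ds_n\ns=(1+\ns)\ds_n$ applied to $\ns W$, while you unravel it degree by degree, which is the same computation. Your parity argument for the factor $\tfrac12$ from $\Sigma$ is more than the paper supplies (it simply asserts that $\xi_n-X_n^Q$ has no constant term), but it is correct and fills that small gap.
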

\begin{proof}
Since $\xi_n - X_n^Q$ has no constant term, $\Sigma$ contributes a factor of at most $\frac12$ to each monomial in $W$. Hence
	\[
		\|W\|_R \leq \frac{1}{2} \sum_{n\in \nz}  R \|\xi_n - X_n^Q\|_R,
	\]
and the rest of the bound follows from Proposition \ref{conju_exist} and the observations that $\pi(Q,n,R)<1$ implies $\|\Xi_n(T)\|_{R\otimes_\pi R}\leq 2$. Each $\xi_n$ being self-adjoint implies $W$ is self-adjoint. To see $\ds_n W= \xi_n -X_n^Q$, we proceed as in Step 4 of the proof of \cite[Theorem 34]{Dab}. First note that $\ds_n W = \xi_n - X_n^Q$ is equivalent to
	\[
		\ds_n (\ns W) = (1+ \ns)(\ds_n W) = \xi_n - 2X_n^Q +\ns \xi_n = \xi_n -2X_n^Q+ \sum_{j\in \nz} \partial_j(\xi_n)\# X_j^Q.
	\]
Now, since $\ds_n(AB+BA)= 2\ds_n(AB)$, we have
	\begin{align*}
		\ds_n(\ns W) &= \ds_n\left(\sum_{j\in \nz} X_j^Q\xi_j - (X_j^Q)^2\right)\\
				&= \sum_{j\in \nz} \ds_n(X_j^Q \xi_j) - 2X_n^Q.
	\end{align*}
Recall that $\ds_n(AB) = \partial_n(A)^\diamond \# B + \partial_n(B)^\diamond\# A$, since $\ds_n=m\circ\diamond\circ\partial_n$. Thus
	\begin{align*}
		\ds_n(\ns W) = \xi_n - 2X_n^Q + \sum_{j\in \nz} \partial_n(\xi_j)^\diamond \#X_j^Q.
	\end{align*}
From \cite[Lemma 36]{Dab} we have $\partial_n(\xi_j)^\diamond = \partial_j(\xi_n)$ and the desired equality follows.
\end{proof}

From (\ref{lnorm}), we have that
	\begin{align*}
		\|X_n^Q\|\leq \left\{\begin{array}{cl} \frac{2}{\sqrt{1-q_{nn}}} & \text{if }q_{nn}\in [0,1),\\
									2 & \text{if }q_{nn}\in (-1,0]. \end{array}\right.
	\end{align*}
If we assume $0<\pi(Q,n,R)<1$ for all $n\in \nz$ (so that $q_\8<\frac12$ by Remark \ref{one_hyp_to_rule_them_all}), then $\sup_n\|X_n^Q\|\leq 2\sqrt{2}<4$.

\begin{cor}\label{mixed_q_isom}
Let $R>5$. If the structure array $Q$ for the mixed $q$-Gaussian algebra $\Gamma_Q$ satisfies $0<\pi(Q,n,R)<1$ for all $n\in \nz$, and
	\[
		\sum_{n\in \nz} \frac{\pi(Q,n,R)}{1-\pi(Q,n,R)} < \frac{e\log\left(\frac{R}{5}\right)}{R\left( R+ \frac{4}{R-\sup_n \|X_n^Q\|}\right)},
	\]
then $\Gamma_Q\cong L(\fz_\8)$ and $C^*(X_n^Q\colon n\in \nz)\cong C^*(X_n\colon n\in \nz)$, where $\{X_n\}_{n\in \nz}$ is a free semicircular family.
\end{cor}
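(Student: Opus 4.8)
The plan is to deduce this corollary by feeding the perturbation produced in Theorem \ref{perturbation_formula} into the free monotone transport result Theorem \ref{transport_thm}; essentially all the substantive work has already been done, and what remains is to check that the smallness hypothesis on $\sum_n \pi(Q,n,R)/(1-\pi(Q,n,R))$ is exactly calibrated so that, after matching up the various radii, the hypothesis of Theorem \ref{transport_thm} on $\|W\|$ is met.

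First I would record the standing facts. Since $0<\pi(Q,n,R)<1$ for all $n$, Remark \ref{one_hyp_to_rule_them_all} gives $q_\8<\tfrac12$, and then \eqref{lnorm} (as in the paragraph preceding this corollary) yields $\sup_n\|X_n^Q\|\le 2\sqrt2<4<5<R$. In particular $R>\sup_n\|X_n^Q\|$, so Proposition \ref{conju_exist} and Theorem \ref{perturbation_formula} apply: the latter furnishes a self-adjoint $W\in\ps^{(R)}$ with $W(0)=0$ (being in the range of $\Sigma$) such that the joint law of $X^Q$ with respect to $\tau_Q$ is a free Gibbs state with perturbation $W$, and
\[
\|W\|_R \le \tfrac12 R\Bigl(R+\tfrac{4}{R-\sup_n\|X_n^Q\|}\Bigr)\sum_{n\in\nz}\frac{\pi(Q,n,R)}{1-\pi(Q,n,R)}.
\]
Combining this with the assumed bound on the sum, the right-hand side is strictly smaller than $\tfrac12 e\log(R/5)$, so $\|W\|_R<\tfrac12 e\log(R/5)$. (Here $W$, an a priori element of $\Gamma_Q$ written as a convergent power series in $X^Q$, is identified via Corollary \ref{R_embedding} with the corresponding element of $\ps^{(R)}$, which is what enters Theorem \ref{transport_thm}.)

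Next I would set up the radii for Theorem \ref{transport_thm}. Put $R_0:=R-1>4$ (using $R>5$), so that $W\in\ps^{(R_0+1)}=\ps^{(R)}$ with $\|W\|_{R_0+1}=\|W\|_R$. The function $s\mapsto \tfrac12 e\log\bigl(\tfrac{R}{s+1}\bigr)$ is continuous and equals $\tfrac12 e\log(R/5)$ at $s=4$; since $\|W\|_R<\tfrac12 e\log(R/5)$ and $(4,R_0)$ is a nonempty open interval (again because $R>5$), there exists $S_0$ with $4<S_0<R_0$ such that
\[
\|W\|_{R_0+1}=\|W\|_R\le \tfrac12 e\log\Bigl(\tfrac{R_0+1}{S_0+1}\Bigr).
\]
Now Theorem \ref{transport_thm}, applied with the free semicircular family $X$, with $N=\Gamma_Q$, $\psi=\tau_Q$, $Z=X^Q$, and radii $R_0>S_0>4$, produces monotone transport from $\tau_X$ to $\tau_{X^Q}$, and hence the trace-preserving isomorphisms $C^*(X_n^Q:n\in\nz)\cong C^*(X_n:n\in\nz)$ and $\Gamma_Q=W^*(X_n^Q:n\in\nz)\cong L(\fz_\8)$. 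The only real obstacle is the radius bookkeeping above: one must verify that the explicit constant in the corollary's hypothesis is precisely what is needed so that, after letting the auxiliary radius $S_0$ approach $4$ from above, the strict bound $\|W\|_R<\tfrac12 e\log(R/5)$ suffices to invoke the transport theorem — everything deeper is already contained in Theorem \ref{perturbation_formula} and Theorem \ref{transport_thm}.
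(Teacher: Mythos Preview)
Your proof is correct and follows essentially the same approach as the paper: both use Theorem \ref{perturbation_formula} to produce the perturbation $W$, deduce $\|W\|_R<\tfrac12 e\log(R/5)$ from the hypothesis, then shift radii (the paper picks $S\in(5,R)$ with $\|W\|_R=\tfrac12 e\log(R/S)$, you equivalently pick $R_0=R-1$ and $S_0\in(4,R_0)$) to feed into Theorem \ref{transport_thm}. Your version is slightly more explicit about $W(0)=0$ and the continuity argument for choosing $S_0$, but the substance is identical.
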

\begin{proof}
Let $W$ be as in Theorem \ref{perturbation_formula}. These conditions imply
	\[
		\|W\|_R< \frac{e \log\left(\frac{R}{5}\right)}{2}.
	\]
Hence $\exists S\in (5,R)$ such that
	\[
		\|W\|_R= \frac{e \log\left(\frac{R}{S}\right)}{2},
	\]
and the isomorphisms follow from Theorem \ref{transport_thm}.
\end{proof}

\begin{rem}
Roughly speaking the condition on $\sum_{n\in\nz}\frac{\pi(Q,n,R)}{1-\pi(Q,n,R)}$ is reasonable since it implies that that $X_i^Q$ and $X_j^Q$ are ``almost free'' when $i+j$ is large and that each $X_n^Q$ is close in distribution to a semicircular variable. Indeed, for $\sum_{n\in\nz}\frac{\pi(Q,n,R)}{1-\pi(Q,n,R)}$ to be finite we must have $|q_{ij}|\to 0$ sufficiently fast as $i+j$ grows, and $q_{ij}$ encodes the relative freeness of $X_i^Q$ and $X_j^Q$. For $\sum_{n\in\nz}\frac{\pi(Q,n,R)}{1-\pi(Q,n,R)}$ to further satisfy the necessary inequality $q_\8$ must be sufficiently small. Since each $X_n^Q$ is a $q_{nn}$-semicircular variable, $q_\8$ being small implies that $X_n^Q$ is close in distribution to a semicircular variable.
\end{rem}

\begin{rem}
As noted in the introduction, in the case $q_{ij}=q^{i+j-1}$ for some $-1<q<1$, if $|q|<0.0002488$ then the hypotheses of Corollary \ref{mixed_q_isom} hold for $R=6.7$.
\end{rem}

\section*{Acknowledgements}
We are grateful to Dan Voiculescu for his timely asking of Question \ref{ques1} with the mixed $q$-Gaussian algebras in mind. Although this question was asked many times, it is in the context of mixed $q$-Gaussian algebras that we see some hope for the first time, which eventually resulted in this paper. We would also like to thank Marek Bo\.zejko for sending us his numerical evidence, and Marius Junge and Dimitri Shlyakhtenko for their comments and encouragement.

\bibliographystyle{alpha}
\bibliography{qisom}
\end{document}